\newtheorem{theorem}{Theorem}
\newtheorem{proposition}[theorem]{Proposition}
\newtheorem{corollary}[theorem]{Corollary}
\newtheorem{lemma}[theorem]{Lemma}
\newtheorem{fact}[theorem]{Fact}
\newtheorem{definition}[theorem]{Definition}
\newtheorem{remark}[theorem]{Remark}
\newtheorem{problem}[theorem]{Problem}
\newtheorem{question}[theorem]{Question}
\newtheorem{example}[theorem]{Example}
\newtheorem{rem/def}[theorem]{Remark/Definition}
\newtheorem{notation}[theorem]{Notation}
\numberwithin{theorem}{section}
\providecommand{\pr}{\operatorname{pr}}
\providecommand{\ch}{\mathop{char}}
\def\CC{{\mathcal C}}
\def\CH{{\mathcal H}}
\def\CL{{\mathcal L}}
\def\CM{{\mathcal M}}
\def\CN{{\mathcal N}}
\def\CR{{\mathcal R}}
\def\CU{{\mathcal U}}
\def\BN{{\mathbb N}}
\def\BZ{{\mathbb Z}}
\def\BQ{{\mathbb Q}}
\def\BR{{\mathbb R}}
\def\BF{{\mathbb F}}
\def\fm{\mathfrak{m}}
\def\CO{\mathcal{O}}
\def\dnu{\dot{\nu}}
\def\cfd{K^{\circ}}
\def\cnu{\nu^{\circ}}
\def\W{\operatorname{W}}
\def\Iso{\mathrm{Iso}}
\def\Hom{\mathrm{Hom}}
\def\id{\operatorname{Id}}
\def\ov{\overline}
\def\wi{\widetilde}
\def\L{\operatorname{L}}
\def\ob{\operatorname{Ob}}
\def\mor{\operatorname{Mor}}
\def\U{\mathrm{U}}
\def\H{\mathcal H}
\def\wH{\widehat{\H}}
\def\hp{+_{\mathcal H}}
\def\hm{\cdot_{\mathcal H}}
\def\hn{\nu_{\mathcal H}}
\def\hsum{\sum^{\mathcal H}}
\def\rg{\BZ}
\def\Tr{\operatorname{Tr}}
\def\To{\operatorname{To}}
\def\wTo{\widehat{\To}}
\def\U{\operatorname{U}}
\def\wU{\widehat{\U}}
\def\T{\operatorname{T}}
\def\res{\operatorname{Res}}
\def\wres{\widehat{\res}}
\def\W{\operatorname{W}}
\title{Hyperfields, truncated DVRs, and valued fields}
\author{Junguk Lee}
\address{Instytut Matematyczny, Uniwersytet Wrocławski\\ pl. Grunwaldzki 2/4\\        50-384 Wrocław, Poland}
\email{jlee@math.uni.wroc.pl}
\thanks{The author is supported by the Narodowe Centrum Nauki grant no. 2016/22/E/ST1/00450. The author thanks the Institut Henri Poincar\'e for their nice hospitality during the author's participating a trimester "Model Theory, Combinatorics and Valued fields" held at the IHP.  The author thanks Jaiung Jun for bringing J. Tolliver's result to the author's attention.}
\begin{document}

\begin{abstract}
For any two complete discrete valued fields $K_1$ and $K_2$ of mixed characteristic with perfect residue fields, we show that if the $n$-th valued hyperfields of $K_1$ and $K_2$ are isomorphic over $p$ for each $n\ge1$, then $K_1$ and $K_2$ are isomorphic. More generally, for $n_1,n_2\ge 1$, if $n_2$ is large enough, then any homomorphism, which is over $p$, from the $n_1$-th valued hyperfield of $K_1$ to the $n_2$-th valued hyperfield of $K_2$ can be lifted to a homomorphism from $K_1$ to $K_2$. We compute such $n_2$ effectively, which depends only on the ramification indices of $K_1$ and $K_2$. Moreover, if $K_1$ is tamely ramified, then any homomorphism over $p$ between the first valued hyperfields is induced from a unique homomorphism of valued fields. Using this lifting result, we deduce a relative completeness theorem of AKE-style in terms of valued hyperfields.
 
We also study some relationships between valued hyperfields, truncated discrete valuation rings, and  complete discrete valued fields of mixed characteristic. For a prime number $p$ and a positive integer $e$ and for large enough $n$, we show that a certain category of valued hyperfields is equivalent to the category of truncated discrete valuation rings of length $n$ and the ramification indices $e$ having perfect residue fields of characteristic $p$. Furthermore, in the tamely ramified case, we show that a subcategory of this category of valued hyperfields is equivalent to the category of complete discrete valued rings of mixed characteristic $(0,p)$ having perfect residue fields.
\end{abstract}

\maketitle

\section{Introduction}
Our main object in this article is a valued hyperfield. A hyperfield is a field-like algebraic structure whose addition is multivalued, and a valued hyperfield is a hyperfield equipped with a valuation. A typical example is the quotient of a valued field by a multiplicative subgroup of the form $1+\fm^n$ for the maximal ideal $\fm$ of the valuation ring, which is called the $n$-th valued hyperfield of a valued field. In this article, we consider a lifting problem of homomorphisms of the $n$-th valued hyperfields to homomorphisms of valued fields(See Corollary \ref{cor:general_tamely_ramifieid} and Theorem \ref{thm:mainhomlifting}). Also we study relationships between certain categories of valued hyperfields, truncated discrete valuation rings, and discrete valued fields of mixed characteristic(See Theorem \ref{thm:equiv_DVR_hvf} and Theorem \ref{thm:lifting_vhf_valuedfield}). At last, we prove a relative completeness theorem of AKE-style in terms of valued hyperfields(See Theorem \ref{thm:AKE_hyperfield}).\\

M. Krasner in \cite{Kr} introduced a notion of valued hyperfield and used it to do a theory of limits of local fields. In \cite{De}, P. Deligne did the theory of limits of local fields in a different way by defining a notion of triple, which consists of truncated discrete valuation rings and some additional data. Typical examples of a valued hyperfield and a truncated discrete valuation ring are the $n$-th valued hyperfield and the $n$-th residue ring of a valued field respectively, where the $n$-th residue ring is a quotient of a valuation ring by the $n$-th power of the maximal ideal. J. Tolliver in \cite{To} showed that discrete valued hyperfields and triples are essentially same, stated by P. Deligne in \cite{De} without a proof. In \cite{LL}, W. Lee and the author showed that given complete discrete valued fields of mixed characteristic with perfect residue fields, any homomorphism between the $n$-th residue rings of the valued fields is lifted to a homomorphism between the valued fields for large enough $n$. From this, we ask the following question.

\begin{question}\label{question:main_question_1}
Let $K_1$ and $K_2$ be discrete valued fields of mixed characteristic $(0,p)$ having perfect residue fields. Suppose the $n$-th valued hyperfields of $K_1$ and $K_2$ are isomorphic for every $n\ge 1$. Then are $K_1$ and $K_2$ are isomorphic? Moreover, is there $N>0$ such that if the $N$-th valued hyperfiels of $K_1$ and $K_2$ are isomorphic, are $K_1$ and $K_2$ isomorphic?
\end{question}

The $n$-th valued hyperfield $\H_n(K):=K/(1+\fm^n)$ of a valued field $K$ contains automatically information of the multiplicative group $K^{\times}/(1+\fm^n)$. By the multivalued addition of $\H_n(K)$, it also contains information of the $n$-residue ring of $K$ too(See Fact \ref{fact:vhf_to_triple}). We compute the multivalued addition of the $n$-th valued hyperfields rather explicitly, where the multivalued addition of given two elements in $\H_n(K)$ is given as a ball and we can compute its center and its radius of the ball explicitly(See Lemma \ref{lem:basic_on_hyperfield}). In \cite{BaKuh} and \cite{Kuh}, it was considered some structures, called {\em amc-structures}, consisted with information of the $n$-th residue rings and the multiplicative groups $K^{\times}/(1+\fm^n)$. For $n\ge 1$, the $n$-th amc structure of a valued field $K$ is a triple $K_n:=(R_K^{2n}, G_K^n, \Theta_n)$, where $R_K^n:=R/t^{2(n-1)}\fm$, $G_K^{n}:=K^{\times }/(1+t^{n-1}\fm)$, and a map $\Theta_{n}:\{x\in R_K^{2n}:\ x^2\neq 0\}\rightarrow G_K^{n},a+t^{2n}\fm\mapsto a(1+t^n\fm)$. Here, $R$ is the valuation ring of $K$, and $t=1$ if the residue field is of characteristic $0$ and $t=p$ if the residue field is of characteristic $p>0$. In \cite{BaKuh}, S. A. Basrab and F. V. Kuhlmann showed that for a valued field $K$, and for henselian valued fields $L$ and $F$ which are algebraic extension of $K$, if $L_n$ and $F_n$ are isomorphic over $K_n$ for each $n$, then $L$ and $F$ are isomorphic over $K$(See \cite[Corollary 1.4]{BaKuh}). Moreover, if $L$ and $F$ are finite extension of $K$, then it is enough to consider whether $L_n$ and $F_n$ are isomorphic over $K_n$ for large enough $n$. In \cite{Kuh}, F. V. Kuhlmann showed that if $L$ and $F$ are additionally tame extensions of $K$, then $L$ and $F$ are isomorphic over $K$ if $L_1$ and $F_1$ are isomorphic over $K_1$(See \cite[Lemma 3.1]{Kuh}).

We also study relationships between discrete valued hyperfields, truncated discrete valuation rings, and complete discrete valued fields of mixed characteristic. Fix a prime number $p$ and a positive integer $e$. Let $\CC_{p,e}$ be the category of complete discrete valuation rings of mixed characteristic $(0,p)$ having perfect residue fields and ramification index $e$. Let $\CR_{p,e}^n$ be the category of truncated discrete valuation rings of length $n$ with perfect residue fields such that $p$ is in the $e$-th power of the maximal ideal but not in the $e+1$-th power of the maximal ideal.
\begin{fact}\label{fact:motivation_question_2}\cite[Theorem 4.7, Example 3.7(2)]{LL}
For large enough $n$, there is a lifting functor $\L:\CR_{p,e}^n\rightarrow \CC_{p,e}$ satisfying several natural conditions. But two categories $\CC_{p,e}$ and $\CR_{p,e}^n$ are not equivalent.
\end{fact}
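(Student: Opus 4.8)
The plan for the lifting functor is to run Cohen structure theory uniformly. Every object $A$ of $\CC_{p,e}$ is, non-canonically, of the form $W(k)[\pi]/(E(\pi))$, where $k$ is its perfect residue field, $W(k)$ the ring of Witt vectors of $k$, and $E$ an Eisenstein polynomial of degree $e$ over $W(k)$; moreover the isomorphism type of $A$ over $W(k)$ depends only on $E$ modulo a high enough power of $\fm_A$, the threshold being governed by Krasner's lemma and hence by the different $\mathfrak{d}_{A/W(k)} = \fm_A^{d}$ with $d = e-1+v_A(e)$, which is bounded purely in terms of $p$ and $e$. Given a truncated discrete valuation ring $\bar R$ of length $n$ in $\CR_{p,e}^n$, I would first read off its residue field $k$ and build $W(k)$ functorially, then use the relation $p = (\text{unit})\cdot\pi^{e}$ together with the bare ring structure of $\bar R$ to recover enough of an Eisenstein polynomial $E$ over $W(k)$ (precisely: up to an error controlled by some fixed function $m=m(p,e)$, which is $<n$ once $n$ is large), and set $\L(\bar R):=W(k)[X]/(E)$. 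For $n$ past the Krasner threshold this is well defined up to isomorphism independently of the choices, and one gets $\L(A/\fm^n)\cong A$ for $A\in\CC_{p,e}$.

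For morphisms, a homomorphism $\bar R_1\to\bar R_2$ induces an embedding $k_1\hookrightarrow k_2$ of residue fields, hence $W(k_1)\to W(k_2)$, and then a Hensel/Krasner argument lifts the image of the generator to a homomorphism $\L(\bar R_1)\to\L(\bar R_2)$; one checks this respects composition and that $\L$ intertwines the residue-field and Witt-vector functors, giving the "natural conditions", the central one being $\L\circ\operatorname{tr}_n\cong\mathrm{id}_{\CC_{p,e}}$ for the truncation functor $\operatorname{tr}_n\colon\CC_{p,e}\to\CR_{p,e}^n$, $A\mapsto A/\fm^n$.

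For the non-equivalence, I would observe that $\operatorname{tr}_n$ is essentially surjective (each object of $\CR_{p,e}^n$ is the length-$n$ truncation of a discrete valuation ring $V$ with $v_V(p)=e$ and perfect residue field, and the completion of such a $V$ lies in $\CC_{p,e}$) and, once $n$ exceeds the Krasner threshold, faithful (two homomorphisms of objects of $\CC_{p,e}$ agreeing modulo $\fm^n$ send a uniformizer to two roots of the same Eisenstein polynomial that are $\fm^n$-close, hence are equal). Were $\operatorname{tr}_n$ also full it would be an equivalence, so it suffices to exhibit failure of fullness, equivalently of $\operatorname{tr}_n\circ\L\cong\mathrm{id}_{\CR_{p,e}^n}$ — and this is exactly the loss of precision in the reconstruction: pick Eisenstein polynomials $E,E'$ over $W(k)$ agreeing modulo $\fm^m$ but not modulo $\fm^n$; then $A=W(k)[X]/(E)$ and $A'=W(k)[X]/(E')$ are isomorphic by Krasner's lemma while their length-$n$ truncations $A/\fm^n$, $A'/\fm^n$ are in general not, so the round trip $\bar R\mapsto\L(\bar R)\mapsto\L(\bar R)/\fm^n$ need not return $\bar R$, and a concrete such $\bar R$ (as in \cite[Example 3.7(2)]{LL}) witnesses inequivalence.

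I expect the main obstacle to be twofold. First, one must pin down the threshold $n_0=n_0(p,e)$ explicitly and \emph{uniformly in the perfect residue field}: this means bounding the different exponent $d$, controlling the precision lost in extracting an Eisenstein polynomial from the abstract ring $\bar R$, and verifying Krasner-type estimates not depending on $k$. Second comes the functoriality and naturality bookkeeping for $\L$, and it is precisely here that the unavoidable gap between $m$ and $n$ surfaces — which is both what obstructs the equivalence $\CC_{p,e}\simeq\CR_{p,e}^n$ and what motivates passing to valued hyperfields, whose extra multiplicative data $K^{\times}/(1+\fm^n)$ supplies exactly the rigidity missing from a bare truncated discrete valuation ring.
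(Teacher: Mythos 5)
Your construction of $\L$ is in the right spirit and matches \cite[Theorem 4.7]{LL}: rebuild $W(k)$ from the residue field, extract an Eisenstein polynomial from the relation $p=(\mathrm{unit})\,\pi^{e}$ inside $\ov R$, pass to $W(k)[X]/(E)$, and use Krasner's lemma (with threshold governed by the quantity $M(\cdot)$ of Remark/Definition~\ref{rem/def:krasnernumber}, which depends only on $p$ and $e$) to see that the result is well defined and that morphisms lift. That half is sound.

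The non-equivalence half, however, contains two concrete errors, and the witness you propose cannot exist. You cannot find $A\cong A'$ in $\CC_{p,e}$ with $A/\fm^{n}\not\cong A'/\fm^{n}$: any isomorphism of local rings carries $\fm^{n}$ to $\fm^{n}$ and hence descends to the length-$n$ truncations, so isomorphism upstairs always forces isomorphism downstairs; it is only the converse that requires $n$ large (Fact~\ref{fact:lift_n-th_residue}). Likewise, the claim that the round trip $\ov R\mapsto\L(\ov R)\mapsto\L(\ov R)/\fm^{n}$ ``need not return $\ov R$'' flatly contradicts condition~(1) of the lifting functor (Fact~\ref{fact:lifting_DVR_valuedfield}), which asserts $(\Pr_n\circ\L)(\ov R)\cong\ov R$ for every $\ov R$. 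The genuine failure is at the level of \emph{morphisms}, not objects, and this is what \cite[Example 3.7(2)]{LL} records: $\Pr_n$ is not full, because a ramified $R$ has strictly fewer automorphisms than $R/\fm^{n}$. For $p$ odd and $R=\BZ_p[\sqrt p]$ one has $|\mathrm{Aut}(R)|=2$ (generated by $\sqrt p\mapsto -\sqrt p$), whereas an automorphism of $R_n=R/\fm^{n}$ may send $\sqrt p\mapsto u\sqrt p$ for any unit $u$ modulo $\fm^{n-1}$ with $u^{2}\equiv 1\pmod{\fm^{n-2}}$, i.e.\ $u\in\pm 1+\fm^{n-2}/\fm^{n-1}$, so $|\mathrm{Aut}(R_n)|=2p$. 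The extra automorphisms of $R_n$ all lift to $\pm\mathrm{id}_R$, so the would-be natural isomorphism $\Pr_n\circ\L\cong\mathrm{id}_{\CR_{p,e}^n}$ fails on Hom-sets even though it holds on objects. This is precisely the rigidity that the hyperfield $\H_n(K)$ restores through the multiplicative datum $K^{\times}/(1+\fm^{n})$, as you correctly observe at the end; but the Eisenstein-polynomial witness you wrote down would, if anything, prove the opposite of what you want.
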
 
\noindent We ask a question analogous to Fact \ref{fact:motivation_question_2} for valued hyperfields.
\begin{problem}\label{prob:main_problem_1}
Fix a prime number $p$ and a positive integer $e$. Let $n$ be a positive integer. Find a suitable category $\H_{p,e}^n$ of valued hyperfields whose has a lifting functor $\L^{\H}: \H_{p,e}^n\rightarrow \CC_{p,e}$ satisfying some natural conditions, and which makes $\H_{p,e}^n$ and $\CC_{p,e}$ equivalent. 
\end{problem}
At last, we concern a question of relative completeness of AKE-style in terms of hyper valuedfields. S. A. Basarab in \cite{Ba} showed that the theory of finitely ramified henselian valued fields of mixed characteristic is determined by the theory of the $n$-th residue ring for each $n$ and the theory of value group.
\begin{fact}\cite[Corollay 3.1]{Ba}\label{fact:Basarab_result}
Let $K_1$ and $K_2$ be finitely ramified henselian valued fields of mixed characteristic. Let $R_{1,n}$ and $R_{2,n}$ be the $n$-th residue rings of $K_1$ and $K_2$ respectively. Let $\Gamma_1$ and $\Gamma_2$ be the value groups of $K_1$ and $K_2$. Then the following are equivalent:
\begin{enumerate}
	\item $K_1\equiv K_2$.
	\item $R_{1,n}\equiv R_{2,n}$ for every $n\ge 1$ and $\Gamma_1\equiv \Gamma_2$.
\end{enumerate}
\end{fact}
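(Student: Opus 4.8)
The plan is to prove $(1)\Rightarrow(2)$ by interpretability, and $(2)\Rightarrow(1)$ by an Ax--Kochen--Ershov style back-and-forth after first upgrading the residue-ring data from ``elementarily equivalent'' to ``isomorphic'' via Keisler--Shelah. For $(1)\Rightarrow(2)$: if $K$ is finitely ramified henselian of mixed characteristic with valuation ring $R$, maximal ideal $\fm$, value group $\Gamma$, then finite ramification forces $\Gamma$ to have a least positive element $\mathrm{lp}$, and $\fm^n=\{x:v(x)\ge n\cdot\mathrm{lp}\}$, where $n\cdot\mathrm{lp}$ is $\emptyset$-definable in $\Gamma$. Hence $R/\fm^n$ and $\Gamma$ are uniformly interpretable in $K$, so $K_1\equiv K_2$ descends to $R_{1,n}\equiv R_{2,n}$ for all $n$ and $\Gamma_1\equiv\Gamma_2$.

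For $(2)\Rightarrow(1)$, first I would assemble, for each $i$, the \emph{residue system} $\mathcal S(K_i):=\bigl((R_{i,n})_{n\ge1},(\pi_n)_n,\Gamma_i,(v_n)_n\bigr)$, i.e.\ all residue rings together with the quotient maps $\pi_n\colon R_{i,n+1}\to R_{i,n}$, the value group, and the natural partial value maps $v_n\colon R_{i,n}\to\Gamma_i$. One should check that $(2)$ is in fact equivalent to $\mathcal S(K_1)\equiv\mathcal S(K_2)$: the only interaction among the sorts is through data already recovered inside a single ring $R_{i,N}$ for $N$ large ($R_{i,N}$ interprets each $R_{i,n}$ with $\pi_n$ for $n\le N$, plus the value of an element to $\fm$-adic precision $N$), and the residual glueing with $\Gamma_i$ happens along a $\emptyset$-definable finite initial segment. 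Now apply Keisler--Shelah to the countably-sorted structures $\mathcal S(K_i)$ to obtain an ultrafilter $\mathcal U$ with $\mathcal S(K_1)^{\mathcal U}\cong\mathcal S(K_2)^{\mathcal U}$; since ultrapowers commute with the interpretations above, $\mathcal S(K_i)^{\mathcal U}=\mathcal S(K_i^{\mathcal U})$ and $K_i\equiv K_i^{\mathcal U}$. Taking $\mathcal U$ countably incomplete, we are reduced to: \emph{if $L_1,L_2$ are $\aleph_1$-saturated finitely ramified henselian valued fields of mixed characteristic carrying a compatible isomorphism $\mathcal S(L_1)\xrightarrow{\sim}\mathcal S(L_2)$, then $L_1\equiv L_2$.}

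For this I would run the standard valued-field back-and-forth: build an increasing chain of finitely generated valued subfields $A_j\subseteq L_1$, $B_j\subseteq L_2$ with valued-field isomorphisms $f_j\colon A_j\to B_j$ compatible with the fixed isomorphism of residue systems (on $\Gamma$ and on every $R_{i,n}$), enumerated so that $\bigcup_jA_j$ and $\bigcup_jB_j$ realize all of the value group, all of the residue field, and are relatively algebraically closed; then $f=\bigcup_jf_j$ witnesses $L_1\equiv L_2$. The extension steps are the familiar three: (i) adjoin an element of new value, matched via the value-group part of the fixed isomorphism; (ii) adjoin an element of value $0$ with new residue, matched via the residue-ring part; (iii) immediate algebraic extensions, handled by henselianity of $L_2$ with $\aleph_1$-saturation, finite ramification ensuring Hensel's lemma applies to the polynomials that occur and that the absolute ramification ($p=u\pi^e$) is already pinned down inside $\mathcal S$.

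I expect the main obstacle, and the real content of the theorem, to be keeping the residue-ring bookkeeping coherent through the back-and-forth: when a new element is adjoined, its congruence classes modulo \emph{all} the $\fm^n$ must be matched simultaneously and compatibly with the transition maps, so one is genuinely \emph{extending} the isomorphism $\mathcal S(L_1)\cong\mathcal S(L_2)$ at each stage rather than merely consuming it. This is exactly where finite ramification is indispensable: it keeps the residue rings eventually faithful---$v(p)$ confined to a fixed finite initial segment of $\Gamma$, no unbounded defect---so the system of partial residue-ring matchings has no obstruction to being pushed to the limit. In equal characteristic $0$ the whole system $\mathcal S$ collapses to $(\text{residue field},\Gamma)$ and the argument degenerates to classical Ax--Kochen--Ershov; clause $(2)$ listing all $n$ is precisely the finitely-ramified substitute, in mixed characteristic, for ``the residue field''.
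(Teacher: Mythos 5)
This statement is cited in the paper as a \emph{Fact} from Basarab \cite{Ba} with no proof reproduced, so there is no in-paper argument to compare against line by line. What the paper \emph{does} show, however, is how it handles the analogous hard direction for its own AKE result (Theorem \ref{thm:AKE_hyperfield}): after Keisler--Shelah and passing to $\aleph_1$-saturated ultrapowers, it coarsens the valuation by the smallest nontrivial convex subgroup $\Gamma^\circ$, lands in an equal-characteristic-$(0,0)$ valued field whose residue (core) field $\cfd$ is a complete discrete valued field of mixed characteristic determined by the residue-ring data, applies classical equal-characteristic AKE (Fact \ref{fact:AKE}), and recovers the fine valuation via B\'elair's definability formula $\exists y\, y^q=1+px^q$. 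That is also the shape of Basarab's original argument. Your plan is a genuinely different route: instead of collapsing the $\fm$-adic bookkeeping into the single core field, you carry the full tower $\mathcal S(K_i)=\bigl((R_{i,n}),(\pi_n),\Gamma_i,(v_n)\bigr)$ and run a direct back-and-forth on the mixed-characteristic fields compatible with an isomorphism of $\mathcal S$.

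Two things to flag. The Feferman--Vaught step---that $(2)$ is equivalent to $\mathcal S(K_1)\equiv\mathcal S(K_2)$---is correct and your justification is the right one (all cross-sort interaction routes through the $\emptyset$-definable finite initial segment of $\Gamma$, itself recoverable inside a single $R_N$), but it is asserted rather than proved and deserves an explicit sentence-by-sentence decomposition. The genuine gap is the back-and-forth itself: you identify the core difficulty (``keeping the residue-ring bookkeeping coherent'') and then stop exactly there. The value-extension and residue-extension steps are fine by saturation, but the immediate-extension step (iii) is where the matching must be preserved across \emph{all} levels of $\mathcal S$ simultaneously while climbing a pseudo-Cauchy sequence, and that is precisely the work the coarsening approach outsources to the classical AKE theorem applied to $(K_i,\dnu_i)$. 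Your route is plausibly completable, but as written it is a proof plan whose hardest lemma is still labelled ``I expect''; the coarsening route replaces your three extension lemmas with one application of Fact \ref{fact:AKE} plus a definability check, which is why it is the standard one.
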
 
\begin{question}\label{question:main_question2}
Let $K_1$ and $K_2$ be finitely ramified henselian valued fields mixed characteristic. Let $\H_n(K_1)$ and $\H_n(K_2)$ be the $n$-th valued hyperfields rings of $K_1$ and $K_2$ respectively. If $\H_n(K_1)\equiv \H_{n}(K_2)$ for every $n\ge 1$, $K_1\equiv K_2$?
\end{question}

\bigskip

In Section \ref{preliminaries}, we recall basic notations and facts. In Section \ref{section:lifting}, we answer Question \ref{question:main_question_1} positively. We also compute such $N$ effectively. To lift a homomorphism of the $n$-th valued hyperfields, we consider homomorphisms of valued hyperfields which are over $p$. In Subsection \ref{subsection:tamely_ramifieid}, it is enough to check whether there is an isomorphism over $p$ between two first valued hyperfields in the tamely ramified case. More precisely, any homomorphism over $p$ between the first valued hyperfields is induced from a unique homomorphism between given tamely ramified complete discrete valued fields. In Subsection \ref{subsection:generally_ramified}, we compute such $N$ in Question \ref{question:main_question_1} depending only on  ramification indices of given complete discrete valued fields. To compute $N$ effectively, we first show that a homomorphism over $p$ of the $n$-th valued gives a unique homomorphism between Witt rings of given complete discrete valued fields of mixed characteristic with perfect residue fields. Using the structure theorem of totally ramified extension and Krasner's lemma, we compute $N$ effectively, which is depending only on the ramification index. In general, we can not drop the condition of being over $p$. In Section \ref{section:functoriality}, we suggest a suitable category $\wH_{p,e}^n$ of valued hyperfields whose morphisms are isometric homomorphism, and show that there is a lifting functor $\L^{\CH}:\wH_{p,e}^n\rightarrow \CC_{p,e}$ satisfying proper conditions for large enough $n$. Actually, we show that this category $\wH_{p,e}^n$ and the category $\CR_{p,e}^n$ are equivalent for large enough $n$. This essentially comes from the result of J. Tolliver in \cite{To}. We also show that the a subcategory $\H_{p,e}^n$ of $\wH_{p,e}^n$ whose morphisms are over $p$ is equivalent to $\CC_{p,e}$ for every $n>e$ if $p$ does not divide $e$. In Section \ref{section:AKE_hyperfields}, using lifting result of homomorphisms of $n$-th valued hyperfields in Section \ref{section:lifting}, we give a positive answer of Question \ref{question:main_question2} for the case of perfect residue fields. In this case, it is enough to check whether $N$-th valued hyperfields are elementary equivalent for large enough $N$. Specially, if $K_1$ and $K_2$ are tamely ramified and the first valued hyperfields are elementary equivalent, then $K_1$ and $K_2$ are elementary equivalent.

\section{Preliminaries}\label{preliminaries}
We introduce basic notations and terminologies which will be used in this paper. We denote a valued field by a tuple $(K,R(K),\fm_K,\nu_K,k(K),\Gamma_K)$ consisting of the following data : $K$ is the underlying field, $R(K)$ is the valuation ring, $\fm_K$ is the maximal ideal of $R(K)$, $\nu_K$ is the valuation map, $k_K$ is the residue field, and $\Gamma_{K}$ is the value group considering as an additive group. If there is no confusion, we omit $K$. Hereafter, the full tuple $(K,R,\fm,\nu,k,\Gamma)$ will be abbreviated in accordance with the situational need for the components. For $\gamma\in \Gamma$, we write $\Gamma_{* \gamma}:=\{x\in \Gamma|\ x* \gamma\}$ for $*\in\{\ge,>\}$ and $\fm^{\gamma}:=\{x\in K|\ \nu(x)\in \Gamma_{> \gamma}\}$. Note that for each $\gamma\in \Gamma_{\ge 0}$, $1+\fm^{\gamma}$ is a multiplicative subgroup of $K^{\times}$. For $\gamma\in \Gamma_{\ge 0}$, let $R_{\gamma}:=R/\fm^{\gamma}$. We recall the definitions of ramified valued fields.
\begin{definition}\label{def:unramified_valuedfield}
We say that a valued field $(K,\nu,\Gamma)$ is {\em (absolutely) unramified} if $\ch(k)=0$, or $\ch(k)=p$ and $\nu(p)$ is the minimal positive element in $\Gamma$ for $p>0$. We say $(K,\nu)$ is {\em (absolutely) ramified} if it is not  absolutely unramified.
\end{definition}

\begin{definition}\label{def:ramfication_index}
Let $(K,\nu,\Gamma)$ be a valued field whose residue field has prime characteristic $p$.
\begin{enumerate}
	\item We say $(K,\nu,\Gamma)$ is {\em (absolutely) finitely ramified} if the set $\{\gamma\in \Gamma|\ 0< \gamma\le \nu(p)\}$ is finite. The cardinality of $\{\gamma\in \Gamma|\ 0< \gamma\le \nu(p)\}$ is called {\em the (absolute) ramification index} of $(K,\nu)$, denoted by $e(K,\nu)$ or $e(R)$. If $K$ or $\nu$ is clear from context, we write $e(K)$ or $e$ for $e(K,\nu)$. For $x\in R$, we write $e_{\nu}(x):=|\{\gamma\in \Gamma|\ 0<\gamma\le \nu(x)\}|$. If there is no confusion, we write $e(x)$ for $e_{\nu}(x)$
	\item Let $(K,\nu,\Gamma)$ be finitely ramified. If $p$ does not divide $e_{\nu}(p)$, we say $(K,\nu)$ is \emph{(absolutely) tamely ramified}. Otherwise, we say $(K,\nu)$ is \emph{(absolutely) wildly ramified}.
\end{enumerate}
\end{definition}
\noindent Note that a discrete valued field having a residue field of characteristic $p>0$ is finitely ramified. We say that a discrete valued field $(K,\nu,\Gamma)$ with the residue field having characteristic $p>0$ is {\em normalized} if $\Gamma$ is a subgroup of $\BR$ and $\nu(p)=1$. From now on, we mean a homomorphism between valued fields is an isometric homomorphism, where a field homomorphism $f:K_1\rightarrow K_2$ is called {\em isometric} if for $a,b\in K_1$, $$\nu_{K_1}(a)<\nu_{K_1}(b)\Leftrightarrow \nu_{K_2}(f(a))<\nu_{K_2}(f(b)).$$ We recall some facts on the Witt ring and Teichm\"{u}ller representatives of complete discrete valued fields of mixed characteristic(c.f. \cite{S}).
\begin{fact}\label{fact:witt_teichmuller}
\begin{enumerate}
\item Let $W(k_1)$ and $W(k_2)$ be Witt rings of perfect residue fields $k_1$ and $k_2$ of characteristic $p>0$ respectively. Suppose that there is a homomorphism $\phi : k_1 \longrightarrow k_2 $. Then  there is  a unique lifting homomorphism $g:R_1 \longrightarrow R_2 $ such that $g$ induces $\phi$.

\item Let $R$ be a complete discrete valuation ring of characteristic $0$ with perfect residue field $k$ of characteristic $p$ and corresponding valuation $\nu$. Then
$\W(k)$ can be embedded as a subring of $R$ and $R$ is a free $\W(k)$-module of rank $\nu(p)$. Moreover,
$R=\W(k) [\pi]$ where $\pi$ is a uniformizer of $R$.

\item Let $A$ be a ring that is Hausdorff and complete for a topology defined by a decreasing sequence
$\mathfrak{a}_1 \supset \mathfrak{a}_2 \supset ...$ of ideals such that $\mathfrak{a}_n \cdot \mathfrak{a}_m \subset \mathfrak{a}_{n+m}$.
Assume that the residue ring $A_1=A/\mathfrak{a}_1 $ is a perfect field of characteristic $p$. Then:

\begin{enumerate}
\item
There exists one and only one system of representatives $h: A_1 \longrightarrow A$ which commutes with $p$-th powers:
$h(\lambda^p ) = h(\lambda)^p $. This system of representatives is called the set of Teichm\"{u}ller representatives.
\item
In order that $ a\in A $ belong to $S= h(A_1)$, it is necessary and sufficient that $a$ be a $p^n$-th power for all $n\geq 0$.
\item
This system of representatives is multiplicative which means
$$
h( \lambda \mu ) = h(\lambda)h(\mu)
$$
for all $\lambda, \mu \in A_1$.
\item
$S$ contains $0$ and $1$.
 \item
 $S\setminus\{0\}$ is a subgroup of the unit group of $A$.
\end{enumerate}
\end{enumerate}
\end{fact}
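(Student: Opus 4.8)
\emph{Proof proposal.} All three parts are classical and in the end I would simply cite \cite{S}; nonetheless the argument I have in mind runs as follows, the one real engine being the elementary lemma that in any ring $A$ filtered by ideals $\mathfrak a_1\supset\mathfrak a_2\supset\cdots$ with $\mathfrak a_n\mathfrak a_m\subset\mathfrak a_{n+m}$ and $A/\mathfrak a_1$ perfect of characteristic $p$, a congruence $a\equiv b\pmod{\mathfrak a_m}$ forces $a^p\equiv b^p\pmod{\mathfrak a_{m+1}}$. I would prove (3) first. Given $\lambda\in A_1$, use perfectness to pick $p^n$-th roots $\lambda^{1/p^n}$, lift each arbitrarily to $\tilde\lambda_n\in A$, and set $h(\lambda):=\lim_n\tilde\lambda_n^{\,p^n}$; the convergence lemma makes this a Cauchy sequence whose limit is independent of the chosen lifts, satisfies $h(\lambda)\equiv\lambda\pmod{\mathfrak a_1}$, and obeys $h(\lambda^p)=h(\lambda)^p$. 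Uniqueness in (a) is the same lemma run in reverse: if $h'$ is another $p$-power-compatible section, then $h(\lambda)\equiv h'(\lambda)\pmod{\mathfrak a_1}$ together with $h(\lambda)^{p^n}\equiv h'(\lambda)^{p^n}$ forces agreement modulo every $\mathfrak a_{n+1}$. Then (b) is immediate from the limit description together with uniqueness; (c) holds because $h(\lambda)h(\mu)$ is itself $p$-power-compatible and reduces to $\lambda\mu$, hence equals $h(\lambda\mu)$; and (d), (e) are formal, as $h(0)=0$, $h(1)=1$, and $h(\lambda)h(\lambda^{-1})=h(1)=1$ for $\lambda\neq0$.

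For (1), apply (3) to $\W(k_1)$ and $\W(k_2)$ with their $p$-adic filtrations to obtain Teichm\"uller sections $h_i\colon k_i\to\W(k_i)$; every element of $\W(k_1)$ has a unique expansion $\sum_{i\ge0}h_1(a_i)p^i$, and I would define $g\bigl(\sum h_1(a_i)p^i\bigr):=\sum h_2(\phi(a_i))p^i$. That $g$ is a ring homomorphism is exactly the universality of the Witt addition and multiplication polynomials, i.e. it is the functoriality $g=\W(\phi)$; and $g$ reduces to $\phi$ by construction. For uniqueness, any lift of $\phi$ must carry $p^n$-th powers to $p^n$-th powers, hence Teichm\"uller representatives to Teichm\"uller representatives by (3), and must be $p$-adically continuous, so it coincides with $g$. (In the statement, $R_1,R_2$ are to be read as $\W(k_1),\W(k_2)$.)

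For (2), first embed $\W(k)\hookrightarrow R$ by applying (3) to $R$ with its $\fm$-adic filtration and sending $\sum h(a_i)p^i$ to the corresponding sum in $R$; this is a ring embedding by the same universality argument, using that $R$ has characteristic $0$ so $p$ is a nonzerodivisor. Next, $R/pR$ has finite length $e=\nu(p)$ over $k$, so by completeness and Nakayama $R$ is generated by $e$ elements as a $\W(k)$-module; being torsion-free over the complete discrete valuation ring $\W(k)$ it is free, necessarily of rank $\dim_k(R/pR)=e$. Finally, for a uniformizer $\pi$ of $R$ the minimal polynomial of $\pi$ over $\W(k)$ is Eisenstein of degree $e$, so $\W(k)[\pi]$ is already a complete discrete valuation ring with uniformizer $\pi$ and residue field $k$; it therefore has the same fraction field, of degree $e$ over $\operatorname{Frac}\W(k)$, as $R$, and since a discrete valuation ring is integrally closed while $R$ is integral over $\W(k)[\pi]$, this forces $\W(k)[\pi]=R$.

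The only genuinely substantive point is the convergence-and-uniqueness machinery behind (3), equivalently the universal property of the ring of Witt vectors: once it is in place, (1) is pure functoriality and (2) is the Cohen structure theorem repackaged through Eisenstein polynomials, and the remaining verifications are routine.
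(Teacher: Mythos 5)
Your proposal is correct; the paper offers no proof here, simply citing Serre \cite{S}, and your sketch is essentially the standard argument from that reference (Chapter II for the Teichm\"uller construction and part (3), Chapter II \S5 for part (2), and the universal property of Witt vectors for part (1)). One small point worth noting: in (3) the key lemma $a\equiv b\pmod{\mathfrak a_m}\Rightarrow a^p\equiv b^p\pmod{\mathfrak a_{m+1}}$ needs $p\in\mathfrak a_1$, which holds because $A/\mathfrak a_1$ has characteristic $p$; and in (2), identifying the length of $R/pR$ over $k$ with $\nu(p)$ uses the standard filtration $\fm^i/\fm^{i+1}\cong k$ — both are implicit in your write-up but worth making explicit if this were to replace the citation.
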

Next we introduce a notion of valued hyperfields.
\begin{definition}\label{def:vhf}\cite[Definition 1.2 and 1.4]{Kr}
\begin{enumerate}
\item A {\em hyeprfield} is an algebraic structure $(H,+,\cdot,0,1)$ such that $(H^{\times},\cdot,1)$, where $H^{\times}:=H\setminus\{0\}$, is an abelian group and there is a multivalued operation $+:\ H\times H\rightarrow 2^{H}$ for the power set $2^H$ of $H$ satisfying the followings:
\begin{enumerate}
	\item $0\cdot \alpha=0$ for all $\alpha\in H$.
	\item (Associative) $(\alpha+\beta)+\gamma=\alpha+(\beta+\gamma)$ for all $\alpha,\beta,\gamma\in H$. 
	\item (Commutative) $\alpha+\beta=\beta+\alpha$ for all $\alpha,\beta\in H$.
	\item (Distributive) $(\alpha+\beta)\cdot \gamma\subset \alpha\cdot\gamma+\beta\cdot\gamma$ for all $\alpha,\beta,\gamma\in H$. 
	\item (Identity) $\alpha+0=\{\alpha\}$ for all $\alpha\in H$.
	\item (Inverse) For any $\alpha\in H$, there is a unique $-\alpha\in H$ such that $0\in \alpha+(-\alpha)$.
	\item For all $\alpha,\beta,\gamma\in H$, $\alpha\in \beta+(-\gamma)$ if and only if $\beta\in \alpha+\gamma$.	 
\end{enumerate}
	
	\item A {\em valued hyperfield} is a hyperfield $(H,+,\cdot,0,1)$ equipped with a map $\nu$ from $H$ to $\Gamma\cup\{\infty\}$ for an ordered abelian group $\Gamma$ such that 
	\begin{enumerate}
		\item For $\alpha\in H$, $\nu(\alpha)=\infty$ if and only if $\alpha=0$;
		\item For all $\alpha,\beta\in H$, $\nu(\alpha\cdot\beta)=\nu(\alpha)+\nu(\beta)$;
		\item For all $\alpha,\beta\in H$, $\nu(\alpha+\beta)\ge \min\{\nu(\alpha),\nu(\beta)\}$;
		\item For all $\alpha,\beta\in H$, $\nu(\alpha+\beta)$ consists of single element unless $0\in \alpha+\beta$; and
		\item There is $\rho_H\in \Gamma$ such that either $\alpha+\beta$ is a closed ball of radius $\rho_H+\min\{\nu(\alpha),\nu(\beta)\}$ for all $\alpha,\beta\in H$, or $\alpha+\beta$ is a open ball of radius $\rho_H+\min\{\nu(\alpha),\nu(\beta)\}$ for all $\alpha,\beta\in H^{\times}$.
	\end{enumerate}
\end{enumerate}
For $B\subset H$ and $\alpha\in H$, define $\alpha+ B:=\bigcup_{\beta \in B}\alpha+ \beta$(*). The associativity of $+$ means that given $\alpha,\beta,\gamma\in H$, we have $(\alpha+\beta),(\beta+ \gamma)\subset H$ and $\alpha+(\beta+\gamma)=(\alpha+\beta)+\gamma$ in the sense of $(*)$. We say that $H$ is {\em discrete} if $\Gamma$ is a discrete subgroup of $\BR$.
\end{definition}
\noindent For $\alpha_0,\ldots, \alpha_k \in H$, we write $\hsum \alpha_i$ for $(\alpha_0+\cdots+ \alpha_k)\subset H$. Since the multivalued operation $+$ is associative, the notion of $\hsum$ is well-defined.
\begin{definition}\label{def:morphism_vhf}
Let $(H_i,+_i,\cdot_i,0_i,1_i,\nu_i)$ be a valued hyperfield for $i=1,2$. A map $f$ from $H_1$ to $H_2$ is called a {\em homomorphsim} if the followings hold:
\begin{enumerate}
	\item $f(0_1)=0_2$ and $f(1_2)=1_2$.
	\item $f(\alpha\cdot_1 \beta)=f(\alpha)\cdot_2 f(\beta)$ for all $\alpha,\beta \in H_1$.
	\item $f(\alpha+_1\beta)\subset f(\alpha)+_2 f(\beta)$ for all $\alpha,\beta\in H_1$.
	\item For all $\alpha,\beta \in H_1$, $$\nu_1(\alpha)\le \nu_1(\beta)\Leftrightarrow\nu_2(f(\alpha_1))\le \nu_2(f(\beta)).$$
\end{enumerate}
Let $\Hom(H_1,H_2)$ be the set of homomorphism from $H_1$ to $H_2$.
\end{definition}
\noindent Note that the definition of homomorphisms in Definition \ref{def:morphism_vhf} is weaker than one in \cite[Definition 1.5]{To}. We call homomorphisms in \cite[Definition 1.5]{To} an {\em isometric homomorphism}.

\begin{definition}\cite[Definition 1.5]{To}\label{def:isometry}
Let $(H_i,+_i,\cdot_i,0_i,1_i,\nu_i)$ be a valued hyperfield for $i=1,2$. Suppose $\nu_1(H_1^{\times})=\nu_2(H_2^{\times})$. A map $f$ from $H_1$ to $H_2$ is called an {\em isometric homomorphism} if the followings hold:
\begin{enumerate}
	\item $f(0_1)=0_2$ and $f(1_2)=1_2$.
	\item $f(\alpha\cdot_1 \beta)=f(\alpha)\cdot_2 f(\beta)$ for all $\alpha,\beta \in H_1$.
	\item $f^{-1}(\alpha+_2 \beta)= f^{-1}(\alpha)+_1f^{-1}(\beta)$ for all $\alpha,\beta \in f(H_1)$.
	\item For all $\alpha\in H_1$, $\nu_1(\alpha)=\nu_2(f(\alpha))$.
	\item $f^{-1}(1_2)$ is a ball.
\end{enumerate}
Let $\Iso(H_1,H_2)$ be the set of isometric homomorphisms from $H_1$ to $H_2$. Note that $(3)$ implies that for $\alpha,\beta\in H_1$ and $f\in \Hom(H_1,H_2)$, $f(\alpha+_1\beta)\subset f(\alpha)+_2 f(\beta)$ so that $\Iso(H_1,H_2)\subset \Hom(H_1,H_2)$.
\end{definition}

\begin{definition}\label{def:vhf_valuedhyperfield}
Let $\gamma\in\Gamma_{\ge 0}$. The {$\gamma$-hyperfield} of $K$ is a hyperfield $(K/(1+\fm^{\gamma}),\hp,\hm)$, denoted by $\H_{\gamma}(K)$, such that for each $a(1+\fm^{\gamma}),b(1+\fm^n)\in K/1+\fm^n$,
	\begin{enumerate}
		\item $a(1+\fm^{\gamma})\hm b(1+\fm^{\gamma}):=ab(1+\fm^{\gamma})$,
		\item $a(1+\fm^{\gamma})\hp b(1+\fm^{\gamma}):=\{(x+y)(1+\fm^{\gamma})|\ x\in a(1+\fm^{\gamma}),y\in b(1+\fm^{\gamma})\}$, and
	\end{enumerate}
Conventionally, we write $0$ for $0(1+\fm^{\gamma})$ and $1$ for $1(1+\fm^{\gamma})$. The valuation $\nu$ of $K$ induces a map $\hn$ on $K/(1+\fm^{\gamma})$ sending $\hn(a(1+\fm^{\gamma}))$ to $\nu(a)$. We call $(K/(1+\fm^{\gamma}),\hp,\hm,\hn)$ the {\em valued $\gamma$-hyperfield}. For $\gamma\le \lambda\in \Gamma_{>0}$, we have that $(1+\fm^{\lambda})\subset (1+\fm^{\gamma})$ and it induces a projection $\H^{\lambda}_{\gamma}:\ \H_{\lambda}(K)\rightarrow\H_{\gamma}(K),a(1+\fm^{\lambda})\mapsto a(1+\fm^{\gamma})$. We write $\H_{\gamma}:K\rightarrow \H_{\gamma}(K),x\mapsto [x]_{\gamma}$.
\end{definition}
\noindent For $A\subset K$, let $\H_{\gamma}(A):=\{a(1+\fm^{\gamma}):\ a\in A\}$ and for $a\in K$, we write $[a]_n$ for $a(1+\fm^{\gamma})\in \H_{\gamma}(K)$. If $\gamma$ is obvious, we write $[a]$ for $[a]_{\gamma}$. Note that the valuation $\hn : \H_{\gamma}^{\times}(K)\rightarrow \Gamma$ is a group epimorphism with the kernel $\H_{\gamma}(R^{\times})$.

\begin{remark}\label{rem:vhf_valuegroup}
The quotient group $\H_{\gamma}^{\times}(K)/\H_{\gamma}(R^{\times})$ is an ordered group isomorphic to $\Gamma$.
\end{remark}

\begin{definition}\label{def:over_p}
Let $K_1$ and $K_2$ be valued fields whose residue fields are of characteristic $p>0$. Let $\H_{\gamma}(K_1)$ and $\H_{\lambda}(K_2)$ be valued hyper fields of $K_1$ and $K_2$ respectively. We say a homomorphism $f:\H_{\gamma}(K_1)\rightarrow \H_{\lambda}(K_2)$ is {\em over $p$} if $f([p])=[p]$.
\end{definition}
\noindent Note that there is an isometric isomorphism which is not over $p$(See Example \ref{ex:Hom_and_HomoverZ}). In Section \ref{section:functoriality}, we see how two sets of isometric homomorphisms and of isometric homomorphisms over $p$ are different(See Theorem \ref{thm:equiv_DVR_hvf} and Theorem \ref{thm:tame_equiv_vhf_valuedfields}).

\begin{notation}\label{notation:nth_residuering_vhf}
Let $(K,\nu)$ be a finitely ramified valued field and let $\pi$ be a uniformizer of $K$. Let $n\ge m$ be positive integers. We write $\fm^n:=\fm^{\nu(\pi^{n-1})}$. We write $R_{n}(K):=R(K)/\fm^n$ and $\H_n(K):=K/(1+\fm^n)$. We call $R_n(K)$ {\em the $n$-th residue ring} of $K$ and $\H_n(K)$ {\em the $n$-th valued hyperfield} of $K$. We write $[x]_n:=x(1+\fm^n)$ for $x\in K$. We denote $\H^n_m$ and $\H_m$ for $\H^{\nu(\pi^{n-1})}_{\nu(\pi^{m-1})}$ and $\H_{\nu(\pi^{m-1})}$ respectively.  
\end{notation}

We now recall some results on hyperfields and (Deligne's) triples in \cite{To}.
\begin{definition}\cite[Definition 1.8]{To}\label{def:DVR}
A {\em truncated discrete valuation ring}, in short a truncated DVR, is a principal Artinian local ring. Let $R$ be a truncated DVR of length $l$. For $x\in R$, we define $\nu_R(x)=\sup\{ i\in \BN|\ x\in \fm_R^i \}$, where $\fm_R$ is the maximal ideal of $R$. Then $\nu_R(R)=\{ 0,1,\ldots,l-1 \}\cup \{ \infty \}$, and $\nu_R(x)=\infty$ if and only if $x=0$.
\end{definition}
\begin{definition}\cite[Definition 1.9]{To}\label{def:triples}
A {\em (Deligne's) triple} $(R,M,\epsilon)$ consists of a truncated DVR $R$, a free $R$-module $M$ of rank $1$, and a $R$-module homomorphism $\epsilon:\ M\rightarrow R$ whose image is the maximal ideal of $R$.
\end{definition}
\begin{definition}\cite[Definition 1.10]{To}\label{def:morphism_triples}
Let $T_1=(R_1,M_1,\epsilon_1)$ and $T_2=(R_2,M_2,\epsilon_2)$ be triples. A morphism of triples $(r,f,\eta):T_1\rightarrow T_2$ consists of a homomorphism $f:R_1\rightarrow R_2$, an integer $r$, called the ramification index, and an $R_1$-module homomorphism $\eta :M_1\rightarrow M_2^{\otimes r}$ such that \begin{itemize}
	\item $f\circ \epsilon_1=\epsilon_2\circ \eta$; and
	\item $\eta$ induces a $R_2$-module isomorphism from $M_1\otimes_{R_1} R_2$ to $M_2^{\otimes r}$.
\end{itemize}
The composition of morphisms $(r_1,f_2,\eta_1)$ and $(r_2,f_2,\eta_2)$ of triples is given by $$(r_1,f_2,\eta_1)\circ (r_2,f_2,\eta_2)=(r_1r_2,f_1\circ f_2, \eta_1^{\otimes r_2}\circ \eta_2).$$
\end{definition}
\begin{remark}\cite[Remark 4.5]{To}
Let $T=(R,M,\epsilon)$ be a triple. Let $M$ be a free $R$-module of rank $1$ and $\Pi$ be a generator. For each $k\in \BZ$, the tensor power $M^{\otimes k}$ is a well-defined $R$-module of rank $1$. More precisely, $$M^{\otimes k}=\begin{cases}
R(\Pi^{\otimes k})& \mbox{if }k>0\\
R& \mbox{if }k=0\\
\Hom_R(M^{\otimes (-k)}, R)= R(\Pi^{\otimes k})& \mbox{if }k<0
\end{cases},$$
where for $k<0$, $\Pi^{\otimes k}\in \Hom_R(M^{\otimes (-k)}, R)$ is a unique homomorphism sending $\Pi^{\otimes (-k)}$ to $1$. Define a map $\nu_T:\ \bigcup_{k\in \BZ}\limits M^{\otimes k}\rightarrow \BZ\cup \{\infty\}$, called a {\em valuation map} of $T$, as follows: For $x=r\Pi^{\otimes k}\in M^{\otimes k}$, $\nu_T(x)=\nu_{R}(r)+k$.
\end{remark}

\begin{notation}
Let $(H,\nu)$ be a discrete valued hyperfield. Denote $\theta_H:=\min\{\nu(x)|\ \nu(x)>0,\ x\in H\}$.
\end{notation}
\begin{rem/def}\cite[Remark 4.1]{To}\label{rem/def:length_vhf}
Let $H$ be a discrete valued hyperfield. There is a positive integer $l$ such that $\rho_H=l\theta_H$. Such $l$ is called the {\em length} of $H$, denoted by $l(H)$.
\end{rem/def}
\begin{example}
For a discrete valued field $K$, we have that $l(\H_n(K))=n$.
\end{example}

\begin{definition}\label{def:distance_equiv}
Let $(H,\nu)$ be a discrete valued hyperfield. For $\eta\in \BR$, define an equivalence relation $\equiv_{\eta}$ on $H$ as follows: For $\alpha,\beta\in H$, $\alpha\equiv_{\eta} \beta$ if and only if $\nu(\alpha-\beta)\ge \eta$. For $\alpha\in H$, we write $[\alpha]_{\eta}$ for the $\equiv_{\eta}$-class of $\alpha$. If $\eta$ is obvious, we omit it. Denote $\CO_H:=\{\alpha\in H|\ \nu(\alpha)>0\}$ and $\fm_H:=\{\alpha\in H|\ \nu(\alpha)\ge \theta_H\}$.
\end{definition}

\begin{fact}\cite[Sections 4 and 5]{To}\label{fact:vhf_to_triple}
Let $H$ be a discrete valued hyperfield. Let $R=\CO_H/\equiv_{\rho_H}$, $M=\fm_H/\equiv_{\rho_H+\theta_H}$, and $\epsilon:\ M\rightarrow R,\ [\alpha]_{\rho_H+\theta_H}\mapsto [\alpha]_{\rho_H}$ for $\alpha\in H$. Then,
\begin{enumerate}
	\item $\Tr(H)=(R,M,\epsilon)$ is a triple, and
	\item $R$ is of length $l(H)$ and the maximal ideal $\fm_R$ is $M/\equiv_{\rho_H}$.
\end{enumerate}
We write $R_l(H)$ for $R$. If $H$ or $l$ is obvious, we write $R_l$ or $R$ for $R_l(H)$. For discrete valued hyperfields $H'$ and $H"$, and for $f\in \Iso(H,H')$ and $g\in \Iso(H',H'')$, we have that
\begin{enumerate}
	\item $\Tr(f)$ and $\Tr(g)$ are homomorphisms between $\Tr(H)$, $\Tr(H')$, and $\Tr(H'')$ respectively, and
	\item $\Tr(g\circ f)=\Tr(g)\circ \Tr(f)$. 
\end{enumerate}
\end{fact}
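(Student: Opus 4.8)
Since this is \cite[Sections 4 and 5]{To}, the plan is only to recall the construction and indicate the verifications it requires. The mechanism behind every clause is that, although $\hp$ is genuinely multivalued on $H$, it becomes single-valued after passing to $\CO_H/\equiv_{\rho_H}$ and to $\fm_H/\equiv_{\rho_H+\theta_H}$: if $\alpha,\beta\in\CO_H$ then $\nu(\alpha),\nu(\beta)\ge 0$, so by Definition \ref{def:vhf}(2)(e) the hyperset $\alpha\hp\beta$ is a ball of radius $\ge\rho_H$ and hence lies in one $\equiv_{\rho_H}$-class; if moreover $\alpha,\beta\in\fm_H$ its radius is $\ge\rho_H+\theta_H$, so it lies in one $\equiv_{\rho_H+\theta_H}$-class. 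The one auxiliary fact I would isolate at the outset is that, by iterating Definition \ref{def:vhf}(2)(c), any hypersum all of whose summands have valuation $\ge\rho_H$ has every element of valuation $\ge\rho_H$, hence lies in the class of $0$; this makes both the well-definedness of the descended operations and the ring and module identities mechanical consequences of the hyperfield axioms, since the two bracketings of any iterated sum lie in $(\alpha\hp\beta)\hp\gamma=\alpha\hp(\beta\hp\gamma)$, which is a single $\equiv_{\rho_H}$-class by the radius bound together with the auxiliary fact.

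With this, I would equip $R:=\CO_H/\equiv_{\rho_H}$ with the operations $[\alpha]+[\beta]:=[\gamma]$ for any $\gamma\in\alpha\hp\beta$ and $[\alpha]\cdot[\beta]:=[\alpha\hm\beta]$, the latter well defined because a short computation with Definition \ref{def:vhf}(1)(d),(g),(2)(c) gives $\nu(\alpha\beta-\alpha'\beta')\ge\rho_H$ whenever $\alpha\equiv_{\rho_H}\alpha'$ and $\beta\equiv_{\rho_H}\beta'$; Definition \ref{def:vhf}(1) then makes $R$ a commutative ring with $1=[1]$. To see $R$ is a truncated DVR of length $l:=l(H)$: $\nu$ descends to $\nu_R$ with $\nu_R(R)=\{0,\theta_H,\dots,(l-1)\theta_H\}\cup\{\infty\}$, since two representatives of a nonzero class differ by an element of valuation $\ge\rho_H$, which dominates; fixing $\pi\in H$ with $\nu(\pi)=\theta_H$, the class $[\pi]_{\rho_H}$ generates the unique maximal ideal $\fm_R=\{[\alpha]_{\rho_H}:\nu(\alpha)\ge\theta_H\}=M/\equiv_{\rho_H}$, and every nonzero class equals a unit times $[\pi]_{\rho_H}^{k}$ with $0\le k<l$; thus $R$ is local, principal, and Artinian of length $l$. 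This is part $(2)$.

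Next I would check that $M:=\fm_H/\equiv_{\rho_H+\theta_H}$ is an abelian group under the descended $\hp$, and that $[\alpha]_{\rho_H}\cdot[\beta]_{\rho_H+\theta_H}:=[\alpha\hm\beta]_{\rho_H+\theta_H}$ is well defined, from $\nu((\alpha-\alpha')\beta)\ge\rho_H+\theta_H$ and $\nu(\alpha(\beta-\beta'))\ge\rho_H+\theta_H$, making $M$ an $R$-module. It is free of rank one on $[\pi]_{\rho_H+\theta_H}$: every $\beta\in\fm_H$ equals $(\beta\pi^{-1})\pi$ with $\beta\pi^{-1}\in\CO_H$, and $[\alpha]_{\rho_H}\cdot[\pi]_{\rho_H+\theta_H}=0$ forces $\nu(\alpha)\ge\rho_H$, i.e.\ $[\alpha]_{\rho_H}=0$. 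Finally $\epsilon\colon[\alpha]_{\rho_H+\theta_H}\mapsto[\alpha]_{\rho_H}$ is well defined since $\equiv_{\rho_H+\theta_H}$ refines $\equiv_{\rho_H}$, is $R$-linear and additive by construction, and has image $\{[\alpha]_{\rho_H}:\alpha\in\fm_H\}=\fm_R$; hence $(R,M,\epsilon)$ is a triple, which is part $(1)$.

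For functoriality, let $f\in\Iso(H,H')$ and $g\in\Iso(H',H'')$. From $\nu'\circ f=\nu$ and $\nu(H^\times)=\nu'(H'^\times)$ one gets $\theta_H=\theta_{H'}$, and $f$ carries $\CO_H$ into $\CO_{H'}$ and $\fm_H$ into $\fm_{H'}$; I would then observe that conditions Definition \ref{def:isometry}(1)--(5) are exactly calibrated so that $f$ respects the defining equivalence relations and descends to a ring map $R\to R'$ and an $R$-linear map $M\to {M'}^{\otimes 1}$ with $f\circ\epsilon=\epsilon'\circ(\text{descended }f)$, and so that $(3)$ and $(5)$ produce the base-change isomorphism required of a morphism of triples in Definition \ref{def:morphism_triples} (ramification index $1$). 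This descended datum is $\Tr(f)$, and since $\Tr$ amounts to ``restrict the underlying set map, then pass to the quotients'', $\Tr(g\circ f)=\Tr(g)\circ\Tr(f)$ is immediate. I expect the only real difficulty to be book-keeping: the hypersum-collapse fact of the first paragraph, and, in the functoriality part, checking that conditions $(3)$ and $(5)$ genuinely yield the module-isomorphism clause of Definition \ref{def:morphism_triples} rather than merely an $R$-module map; everything else is a transfer of axioms through the quotient.
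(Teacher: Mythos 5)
The paper does not prove this Fact; it imports it verbatim from Tolliver \cite{To}, so there is no in-paper argument to compare against. Your reconstruction is sound, and it isolates the right mechanism: the hypersum $\alpha\hp\beta$ is a ball of radius $\rho_H+\min\{\nu(\alpha),\nu(\beta)\}$, which meets or exceeds $\rho_H$ (respectively $\rho_H+\theta_H$) once both summands lie in $\CO_H$ (respectively $\fm_H$), so the descended addition is single-valued, and the collapse of deep hypersums to the class of $0$ then makes the ring and module axioms transfer mechanically. Two small points. First, Definition \ref{def:distance_equiv} as printed has $\CO_H=\{\alpha\mid\nu(\alpha)>0\}$, which would exclude $1$ and make $\CO_H=\fm_H$; you tacitly read this as $\nu(\alpha)\ge 0$, which is evidently the intended definition and is what the construction requires. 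Second, regarding your flagged worry about the module-isomorphism clause of Definition \ref{def:morphism_triples}: the clause does not come from conditions (3) and (5) of Definition \ref{def:isometry} but from condition (4) together with the ambient hypothesis $\nu_1(H_1^\times)=\nu_2(H_2^\times)$. These give $\theta_H=\theta_{H'}$ and $\nu'(f(\pi))=\theta_{H'}$, so $\eta$ sends the free generator $[\pi]_{\rho_H+\theta_H}$ of $M$ to an element of valuation exactly $\theta_{H'}$, hence to a generator of $M'$; an $R'$-linear map of free rank-one modules taking a generator to a generator is automatically an isomorphism, so $M\otimes_R R'\to M'$ is one. Conditions (3) and (5) are what make the underlying set map compatible with the quotienting in the first place. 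With these two clarifications the proposal is complete.
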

\begin{rem/def}\label{rem/def:residuefield_vhf}
Let $H$ be a discrete valued hyperfield and $\Tr(H)=(R,M,\epsilon)$. The set $\CO_H/\equiv_{\theta_H}$ forms a field and it is isomorphic to $R/\fm_R$. The field $\CO_H/\equiv_{\theta_H}$ is called the {\em residue field} of $H$, denoted by $k(H)$. Moreover, for each $g\in \Iso(H_1,H_2)$, it induces a homomorphism $k(g):k(H_1)\rightarrow k(H_2)$.
\end{rem/def}
\begin{proof}
To show $\CO_H/\equiv_{\theta_H}$ forms a field, mimic the proof of \cite[Lemma 4.2]{To}. And consider a map $f:\CO_H/\equiv_{\theta_H}\rightarrow R$ sending $[\alpha]_{\theta_H}$ to $[\alpha]_{\rho_H}+\fm_R$. Since $\theta_H\le \rho_H$, $f$ is well-defined and it induces an isomorphism. Let $H_1$ and $H_2$ be discrete valued hyperfield and let $g:H_1\rightarrow H_2$ be an isometric homomorphism. Then it induces $\Tr(g):\Tr(H_1)\rightarrow \Tr(H_2)$, which induces a homomorphism $k(g)$ from $k(H_1)$ to $k(H_2)$.
\end{proof}

\begin{remark}\cite[Section 6]{To}\label{rem:triple_to_vhf}
Let $T=(R,M,\epsilon)$ be a triple with the valuation map $\nu_T$. Define $\U(T):=\{0\}\cup\bigcup_{i\in \BZ}\limits \{x\in M^{\otimes i}|\ \nu_T(x)=i\}$. Then $\U(T)$ is a discrete valued hyperfield. Moreover, if $T=\Tr(H)$ for a discrete valued hyperfield $H$, then $\U(T)\cong H$ by an isometric isomorphism after rescaling $\nu_T(\Pi)=\theta_H$ for a generator $\Pi$ of $M$. Moreover the assignments $\Tr$ and $\U$ are funtorial. For discrete valued hyperfields $H_1$ and $H_2$, each $f\in\Iso(H_1,H_2)$ induces a morphism $\Tr(f)\in\Hom(\Tr(H_1),\Tr(H_2))$. For triples $T_1$ and $T_2$, each $g\in \Hom(T_1,T_2)$ induces a morphism $\U(g)\in \Iso(\U(T_1),\U(T_2))$.  
\end{remark}

\begin{example}\cite[Example 4.9]{To}\label{ex:vhf_of_valuedfield}
Let $K$ be a discrete valued field. Let $\H_n(K)$ be the $n$-th valued hyperfield of $K$ for a positive integer $n$. Then $\Tr(\H_n(K))\cong (R(K)/\fm_K^n,\fm_K/\fm_K^{n+1},\epsilon)$, where $\fm_K$ is the maximal ideal of the valuation ring $R(K)$ and the map $\epsilon$ is induced by the inclusion $\fm_K\subset R(K)$.
\end{example}

We introduce some algebraic and model theoretic structural theorems in terms of the $n$-th valued rings for finitely ramified valued fields.
\begin{rem/def}\label{rem/def:krasnernumber}
Let $R$ be a complete discrete valuation ring of mixed characteristic. Let $\pi$ be a uniformizer of $R$ and $\nu$ corresponding valuation of $R$. Let $L$ and $K$ be the fraction fields of $R$ and $\W(k)$ respectively.
We denote the maximal number
$$
 \max\left\{\nu\big(\pi- \sigma(\pi) \big): \sigma\in \Hom_{K}(L,L^{alg}), \sigma (\pi) \neq \pi \right\}
$$
by $M(R)_{\pi}$ or $M(L)_{\pi}$. This $M(R)_{\pi}$ does not depend on the choice of $\pi$ and we write $M(R)=M(L)$.

Let $(F,\nu_F)$ be a complete discrete valuation ring of mixed characteristic having the same ramification index with $L$. Suppose there is an homomorphism from $L$ to $F$. Then $M(F)=M(L)$
\end{rem/def}

\begin{remark}\cite[Theorem 3.9]{LL}
Let $(K_1,\nu_1)$ and $(K_2,\nu_2)$ be complete discrete valued fields of mixed characteristic $(0,p)$. Then we have that $$M(K_1)e_{\nu_1}(p)e_{\nu_2}(p)\le e_{\nu_2}(p)(1+e_{\nu_1}^2(p)).$$
\end{remark}

\noindent To determine whether two complete discrete valued fields of mixed characteristic with perfect residue fields are isomorphic, it is enough to check whether the $n$-th residue rings are isomorphic for large enough $n$.
\begin{fact}\cite{LL}\label{fact:lift_n-th_residue}
Let $K_1$ and $K_2$ be complete discrete valued fields of mixed characteristic with perfect residue fields. Let $R_{1,n}$ and $R_{2,n}$ be the $n$-th residue rings of $K_1$ and $K_2$ respectively. For $n> e_{\nu_2}(p)(1+e_{\nu_1}^2(p))$, if $R_{1,n}$ and $R_{2,n}$ are isomorphic, then $K_1$ and $K_2$ are isomorphic. More generally, for positive integers $n_1$ and $n_2$ with $n_2>e_{\nu_2}(p)(1+e_{\nu_1}^2(p))$, any homomorphism from $R_{1,n_1}$ to $R_{2,n_2}$ is lifted to a homomorphism from $K_1$ to $K_2$.
\end{fact}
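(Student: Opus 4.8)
The plan is to recover a homomorphism $K_1\to K_2$ from the datum of a residue‑field homomorphism together with a root of a "transferred" Eisenstein polynomial, extracting both from $f$ by means of Witt‑vector functoriality and Krasner's lemma. First I would fix coordinates: normalize the valuations so that $\nu_i(p)=e_i:=e_{\nu_i}(p)$, write $R_i:=R(K_i)$ and $W_i:=\W(k_i)\subseteq R_i$ for the residue fields $k_i:=k(K_i)$, fix a uniformizer $\pi_i$ of $R_i$, and let $g_i\in W_i[x]$ be its Eisenstein minimal polynomial over $\mathrm{Frac}(W_i)$, so that $R_i\cong W_i[x]/(g_i)$ with $\deg g_i=e_i$ by Fact~\ref{fact:witt_teichmuller}(2). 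The reduction to record is that an (isometric) homomorphism $\psi\colon K_1\to K_2$ amounts to a pair consisting of a field homomorphism $\varphi\colon k_1\to k_2$ --- equivalently, by Fact~\ref{fact:witt_teichmuller}(1), its unique Witt lift $\widetilde\varphi\colon W_1\to W_2$ --- together with a root $b\in R_2$ of the polynomial $g_1^{\widetilde\varphi}\in W_2[x]$ obtained by applying $\widetilde\varphi$ to the coefficients of $g_1$: one recovers $\psi$ by $\psi|_{W_1}=\widetilde\varphi$ and $\psi(\pi_1)=b$, which is well defined because $g_1^{\widetilde\varphi}(b)=0$, injective because $K_1$ is a field, and isometric because $b$ is a root of an Eisenstein polynomial of degree $e_1$, so that $\nu_2\circ\psi=(e_2/e_1)\nu_1$. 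So it suffices to produce such a pair from $f$.

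Next I would extract the data. Since the maximal ideal of each $R_{i,n_i}$ is its nilradical, $f$ is a local homomorphism and induces $\varphi\colon k_1\to k_2$; let $\widetilde\varphi$ be its Witt lift. Comparing characteristics through $f$ shows that $p^{\lceil n_1/e_1\rceil}=0$ in $R_{1,n_1}$ forces $p^{\lceil n_2/e_2\rceil}\mid p^{\lceil n_1/e_1\rceil}$, i.e.\ $\lceil n_1/e_1\rceil\ge\lceil n_2/e_2\rceil$ (so $n_1$ is automatically large), whence the composite $W_1\xrightarrow{\widetilde\varphi}W_2\to R_{2,n_2}$ factors through $W_1/p^{\lceil n_1/e_1\rceil}$. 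Because $f$ preserves $p^m$‑th powers it carries Teichm\"uller representatives to Teichm\"uller representatives (Fact~\ref{fact:witt_teichmuller}(3)), so $f$ agrees with $\widetilde\varphi$ on the image of $W_1$ in $R_{1,n_1}$; applying $f$ to $g_1(\pi_1)=0$ then gives that any lift $\widehat a\in R_2$ of $a:=f(\pi_1)$ satisfies $\nu_2\!\left(g_1^{\widetilde\varphi}(\widehat a)\right)\ge n_2$. Comparing $a^{e_1}=f(\pi_1^{e_1})$ with $f(p)=p$, and using that $g_1$ is Eisenstein, shows in addition that $e_1\mid e_2$ and $\nu_2(\widehat a)=e_2/e_1$.

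The crux is to upgrade $\widehat a$ to a genuine root. Since $g_1^{\widetilde\varphi}$ is Eisenstein of degree $e_1$ over the characteristic‑$0$ field $\mathrm{Frac}(W_2)$ it is irreducible and separable, so any root $\beta$ generates a totally ramified extension $L':=\mathrm{Frac}(W_2)(\beta)$ of ramification index $e_1$, and $\pi_1\mapsto\beta$ together with $\widetilde\varphi$ defines a homomorphism $\mathrm{Frac}(R_1)\to L'$ of equal ramification index. By Remark/Definition~\ref{rem/def:krasnernumber} this forces $M(L')=M(K_1)$, so with the inequality $M(K_1)\,e_{\nu_1}(p)\,e_{\nu_2}(p)\le e_{\nu_2}(p)(1+e_{\nu_1}^2(p))$ recalled just above, the largest $\nu_2$‑distance between distinct conjugates of $\beta$ is $(e_2/e_1)M(K_1)\le M_0:=e_2(1+e_1^2)/e_1^2$. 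Factoring $g_1^{\widetilde\varphi}(\widehat a)=\prod_\sigma(\widehat a-\sigma\beta)$ over the $e_1$ conjugates, from $\sum_\sigma\nu_2(\widehat a-\sigma\beta)\ge n_2>e_1^2M_0$ an ultrametric computation shows that $\widehat a$ lies strictly closer to one conjugate $b$ than $b$ does to any other; Krasner's lemma (here completeness of $K_2$ is used) then gives $K_2(b)\subseteq K_2(\widehat a)=K_2$, so $b\in K_2$, and being integral over $R_2$ we get $b\in R_2$ with $g_1^{\widetilde\varphi}(b)=0$.

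Feeding $(\widetilde\varphi,b)$ into the first‑paragraph reduction yields the desired $\psi\colon K_1\to K_2$; it induces $\varphi$ on residue fields and agrees with $f$ on the Witt subring, and since the same computation gives $\nu_2(b-\widehat a)\ge n_2-(e_1-1)M_0$, the maps $\psi$ and $f$ induce the same homomorphism $R_{1,m}\to R_{2,m}$ for all $m$ up to roughly $n_2-e_2(1+e_1^2)/e_1$ --- this is the sense in which $\psi$ lifts $f$. For the first (isomorphism) assertion one takes $n_1=n_2=n$ with $f$ an isomorphism: then $k_1\cong k_2$ and $e_1=e_2$ are visible in $R_{1,n}\cong R_{2,n}$, so $\varphi$ is an isomorphism, $\psi$ is an isometric embedding with $\nu_2\circ\psi=\nu_1$, and $K_2/\psi(K_1)$ is generated by $\pi_2$ with trivial residue and ramification degree, hence $\psi$ is onto. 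I expect the main obstacle to be the third paragraph: both identifying the conjugate‑spread of the a priori unrelated polynomial $g_1^{\widetilde\varphi}$ with the intrinsic invariant $M(K_1)$ through the "equal ramification index" transfer, and checking that the clean threshold $n_2>e_2(1+e_1^2)$ really dominates all of the resulting error terms.
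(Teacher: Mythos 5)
This statement is imported verbatim from \cite{LL} and is not reproved in the present paper, so there is no in-paper proof to compare against directly; however, your strategy---extracting the residue-field map from the local homomorphism $f$, Witt-lifting it via Teichm\"uller preservation, transferring the Eisenstein minimal polynomial of a uniformizer, and promoting an approximate root to an exact one via Krasner's lemma---is precisely the route the paper itself follows for the hyperfield analogue (Theorem \ref{thm:mainhomlifting}), whose proof is explicitly modelled on the cited argument. The one place deserving more care is the quantitative bookkeeping: the definition of $M(K_1)$ depends on a choice of normalization (the paper later fixes $\nu(p)=1$, while you take $\nu_i(p)=e_i$), and your intermediate claim that the conjugate spread of $\beta$ is $(e_2/e_1)M(K_1)$ does not obviously match the scaling induced by the isometric embedding $K_1\hookrightarrow L'$; this slip happens to cancel because your final inequality $n_2>e_1^2 M_0=e_2(1+e_1^2)$ coincides with the hypothesis, but the constants along the way should be made consistent with a single choice of normalization.
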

For model theory of valued fields, we take the following languages for valued fields and their related structures. Let $\CL_{K}=\{+,-,\cdot;0,1;|\}$ be a ring language with a binary relation $|$ for valued fields, where we interpret the binary relation $|$ as $a|b$ if $\nu(a)\le \nu(b)$ for $a,b\in K$. Let $\CL_{k}=\{+',-',\cdot';0',1'\}$ be the ring language for residue fields, and $\CL_{\Gamma}=\{+^*;0^*;<\}$ be the ordered group language for value groups. For each $n\le 1$, let $\CL_{R_n}=\{+_n,-_n,\cdot_n;0_n,1_n\}$ be the ring language for the $n$-th residue ring. For $n=1$, we identify $\CL_{R_1}=\CL_{k}$. We use a language $\CL_{vhf}:=\{0,1,\cdot,+,|\}$ for valued hyperfields, where $0,1$ are constant symbols, $\cdot$ is a binary function symbol, $+$ is a ternary predicate, and $|$ is a binary relation. For a valued hyperfield $(H,\cdot,+,\nu)$, $1^H$ is interpreted as the identity of the multiplication, $0^H$ as the identity of the addition, $\cdot^H$ as the multiplication function on $H$. For $\alpha,\beta,\gamma\in H$, $(\alpha,\beta,\gamma)\in +^H$ if and only if $\gamma\in \alpha+\beta$, and $(\alpha,\beta)\in |^H$ if and only if $\nu(\alpha)\le \nu(\beta)$. For the convention, $(\alpha,0^H)\in |^H$ for all $\alpha$ in $H$. 

J. Ax and S. Kochen in \cite{AK}, and Y. Ershov in \cite{E} independently proved that the first order theories of unramifieid valued fields of characteristic $0$ are determined by the first order theories of their residue fields and valued groups.
\begin{fact}\label{fact:AKE}\cite{AK,E}{(Ax-Kochen-Ershov principle)} Let $(K_1,k_1,\Gamma_1)$ and $(K_2,k_2,\Gamma_2)$ be unramified henselian valued fields of characteristic zero. Then we have that
\begin{center}
$K_1\equiv K_2$ if and only if $k_1\equiv k_2$ and $\Gamma_1\equiv \Gamma_2$.
\end{center}
\end{fact}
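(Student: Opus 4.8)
The plan is to establish the nontrivial direction by an Ershov-style back-and-forth, trading elementary equivalence for isomorphism of saturated elementary extensions; the converse is immediate since the residue field $k$ and the value group $\Gamma$ are interpretable in $K$ in the language $\CL_K$ (the valuation ring and its maximal ideal being $\CL_K$-definable), so elementary equivalence passes to them. First I would fix a regular cardinal $\kappa$ exceeding $|k_1|+|k_2|+|\Gamma_1|+|\Gamma_2|+\aleph_0$ and pass to $\kappa$-saturated elementary extensions $K_i^{*}\succeq K_i$ of cardinality $\kappa$. Then $k(K_i^{*})$ and $\Gamma_{K_i^{*}}$ are $\kappa$-saturated models of $\Th(k_i)$ and $\Th(\Gamma_i)$ of cardinality $\kappa$ (an interpretable structure in a $\kappa$-saturated structure is $\kappa$-saturated), so from $k_1\equiv k_2$ and $\Gamma_1\equiv\Gamma_2$ we obtain isomorphisms $\rho\colon k(K_1^{*})\cong k(K_2^{*})$ and $\sigma\colon\Gamma_{K_1^{*}}\cong\Gamma_{K_2^{*})}$. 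It then suffices to build an isomorphism $K_1^{*}\cong K_2^{*}$, and I would do this by a transfinite back-and-forth of length $\kappa$ through the system of partial valued-field isomorphisms $f\colon A_1\to A_2$, where $A_i\subseteq K_i^{*}$ is a henselian valued subfield with $|A_i|<\kappa$ and $f$ is compatible with $\rho$ and $\sigma$ on residue fields and value groups.

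The heart of the matter is the one-point extension lemma: given such an $f$ and $a\in K_1^{*}$, extend $f$ to a partial isomorphism of the same kind whose domain contains $a$. After replacing each $A_i$ by its henselization — an immediate extension, embedding uniquely into the henselian $K_i^{*}$ — I would split into cases according to how $a$ interacts with $A_1$. If $\nu(a)\notin\Gamma_{A_1}$, then $a$ is transcendental over $A_1$ and $A_1(a)$ has unchanged residue field; using $\kappa$-saturation of $K_2^{*}$ and $\sigma$ I choose $b\in K_2^{*}$ whose valuation realizes the translated cut in $\Gamma_{K_2^{*}}$, and extend by $a\mapsto b$. If, after multiplying by a suitable element of $A_1$, $a$ is a unit whose residue is transcendental over $k(A_1)$, the situation is symmetric, using $\kappa$-saturation and $\rho$. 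The remaining case is that $a$ lies in an immediate extension of $A_1$, so $a$ is a pseudo-limit of a pseudo-Cauchy sequence $(a_\lambda)$ from $A_1$; since $A_i$ is a henselian valued field of residue characteristic $0$ or an unramified henselian valued field of mixed characteristic — hence algebraically maximal — the sequence $(a_\lambda)$ is necessarily of transcendental type, so transferring it through $f$ and invoking $\kappa$-saturation of $K_2^{*}$ I realize a pseudo-limit $b\in K_2^{*}$ and set $a\mapsto b$. The subcase where the residue of $a$ is algebraic but not already in $k(A_1)$ is vacuous: in residue characteristic $0$ the minimal polynomial is separable, so Hensel's lemma lifts the residue into $A_1$.

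I expect case (iii) to be the main obstacle. It relies on Kaplansky's theory of pseudo-Cauchy sequences (classification into algebraic and transcendental type, and the realization of pseudo-limits by saturation) together with the structural input that, in our setting, \emph{henselian implies algebraically maximal}: for residue characteristic $0$ this is the classical defectlessness of such valued fields, while in the unramified mixed-characteristic case it follows from the defectlessness of finitely ramified henselian valued fields, with Cohen/Witt-vector structure theory governing the residue-field part of any extension. Checking carefully that the henselization and the various one-point extensions remain compatible with $\rho$ and $\sigma$, and that the algebraic cases collapse as claimed, is routine but where the bookkeeping lives.

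Once the extension lemma is in hand, the back-and-forth closes up in the standard way: alternately enumerating $K_1^{*}$ and $K_2^{*}$ in order type $\kappa$ and extending at each stage, the union is an isomorphism $K_1^{*}\cong K_2^{*}$, whence $K_1\equiv K_2$. As alternatives one could instead invoke relative quantifier elimination in a Denef–Pas-style language with an angular-component map, or, for the classical instances comparing $\BQ_p$ with $\BF_p((t))$, the original Ax–Kochen ultraproduct argument; but the back-and-forth proof above is the most direct for the stated equivalence in full generality.
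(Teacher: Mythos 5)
The paper does not prove Fact \ref{fact:AKE}; it is cited directly from Ax--Kochen and Ershov, so there is no in-paper argument to compare against and I am assessing your sketch on its own merits. The back-and-forth plan is the right shape, and your value-transcendental, residue-transcendental, and Kaplansky/immediate steps are the correct ones for the equal-characteristic-$0$ and tame parts of the argument.

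The gap is the residue-algebraic case, which you dismiss as vacuous --- it is not. If $a\in K_1^{*}$ has residue $\bar{a}$ algebraic of degree $>1$ over $k(A_1)$, then $A_1(a)^h/A_1$ is not immediate (the residue field strictly grows) and does not fall under your value-cut or residue-transcendental cases either; Hensel applied in $A_1$ produces nothing, since $\bar{a}\notin k(A_1)$ means the lifted minimal polynomial has no root in $A_1$. When $\bar{a}$ is \emph{separable} over $k(A_1)$ (automatic in residue characteristic $0$) this is a genuine but routine extension step: Hensel in $K_1^{*}$ gives the unique $a_0$ algebraic over $A_1$ near $a$ with residue $\bar{a}$, $\rho$ transports $\bar{a}$, Hensel in $K_2^{*}$ gives $b_0$, and $f$ extends by $a_0\mapsto b_0$. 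The more serious omission is the purely inseparable residue subcase in mixed characteristic $(0,p)$ with imperfect residue fields, which the statement permits: there Hensel gives nothing (the derivative of $X^p-c$ vanishes) and there need be no element of $K_1^{*}$ algebraic over $A_1$ lifting $\bar{a}$. Handling this requires either maintaining $k(A_1)$ relatively perfectly closed in $k(K_1^{*})$ at every stage, or a separate analysis of the inseparable step --- this is the main point where Ershov's generality goes beyond the original Ax--Kochen setting, and your appeal to ``defectlessness of finitely ramified henselian valued fields'' does not by itself dispose of it. (For the paper's only use of this Fact, in Theorem \ref{thm:AKE_hyperfield}, the residue fields are perfect, so the issue can be organized away; the Fact as stated is more general.) A minor point: obtaining $\kappa$-saturated models of cardinality exactly $\kappa$ needs the standard set-theoretic workaround via special models or an absoluteness argument.
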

\noindent In \cite{Ba}, S. A. Basarab generalized Fact \ref{fact:AKE} to the case of finitely ramified valued fields and in \cite{LL}, W. Lee and the author improved the result of Basarab in the case of perfect residue fields.
\begin{fact}\cite{Ba,LL}\label{fact:fined_AKE_Rn}
Let $(K_1,\nu_1,k_1,\Gamma_1)$ and $(K_2,\nu_2,k_2,\Gamma_1)$ be henselian valued fields of mixed characteristic $(0,p)$ with finite ramification indices. Suppose $k_1$ and $k_2$ are perfect fields. Let $n> e_{\nu_2}(p)(1+e_{\nu_1}^2(p))$. The following are equivalent:
		\begin{enumerate}
			\item $K_1\equiv K_2$;
			\item $\Gamma_1\equiv \Gamma_2$ and $R_n(K_1)\equiv R_n(K_2)$.
		\end{enumerate}
\end{fact}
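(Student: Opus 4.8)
The plan is to derive the equivalence from Basarab's theorem (Fact~\ref{fact:Basarab_result}) together with the lifting theorem for residue rings (Fact~\ref{fact:lift_n-th_residue}). The implication $(1)\Rightarrow(2)$ needs nothing new: each $n$-th residue ring $R_n(K)=R(K)/\fm_K^n$ and the value group $\Gamma_K$ are interpretable in $K$ without parameters and uniformly over the relevant class of fields, so elementary equivalence of $K_1$ and $K_2$ transfers to them; alternatively, this is simply the easy direction of Fact~\ref{fact:Basarab_result}.

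For $(2)\Rightarrow(1)$, by Fact~\ref{fact:Basarab_result} it suffices, granting $\Gamma_1\equiv\Gamma_2$, to prove $R_m(K_1)\equiv R_m(K_2)$ for every $m\ge 1$. We may assume $e_{\nu_1}(p)=e_{\nu_2}(p)=:e$, since the ramification index is definable in the $n$-th residue ring (as $n$ exceeds it) and otherwise $R_n(K_1)\not\equiv R_n(K_2)$ and $(2)$ fails; thus $n>e(1+e^2)$. For $m\le n$, the ring $R_m(K)$ is a parameter-free definable quotient of $R_n(K)$ (namely $R_n(K)/\fm_{R_n(K)}^m$, with $\fm_{R_n(K)}$ the definable maximal ideal), so $R_m(K_1)\equiv R_m(K_2)$ follows immediately from $(2)$.

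The case $m>n$ is the crux, and I expect it to be the main obstacle: Fact~\ref{fact:lift_n-th_residue} is about complete discretely valued fields, whereas our fields---and, after the ultrapower step below, even more so---need be neither complete nor discretely valued. I would first prove a reduction lemma: for any henselian finitely ramified valued field $K$ of mixed characteristic with perfect residue field there is a complete discretely valued field $\widehat{\dot K}$ of mixed characteristic $(0,p)$, with the same residue field and the same ramification index, such that $R_m(K)\cong R_m(\widehat{\dot K})$ for every $m$. This is obtained by coarsening $\nu_K$ by the convex subgroup $\BZ\theta$, where $\theta$ is the least positive element of $\Gamma_K$ (which exists by finite ramification, and generates a convex subgroup by minimality), passing to the residue field $\dot K$ of this coarsening---a henselian discretely valued field with residue field $k(K)$ and $\dot\nu(p)=e$---and then completing; the identity $\fm_{\dot K}^{\,m}=\fm_K^m/\fm_{\bar{\nu}}$ gives $R_m(\dot K)=R(K)/\fm_K^m=R_m(K)$, and passing to the completion leaves the finite quotients $R/\fm^m$ unchanged.

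Granting this, I finish as follows. By the Keisler--Shelah isomorphism theorem there is an ultrafilter $\CU$ with $R_n(K_1)^{\CU}\cong R_n(K_2)^{\CU}$, that is, $R_n(K_1^{\CU})\cong R_n(K_2^{\CU})$; each $K_i^{\CU}$ is again henselian, finitely ramified of index $e$, of mixed characteristic $(0,p)$, with perfect residue field. Applying the reduction lemma to $K_i^{\CU}$ produces complete discretely valued fields $G_i$ of ramification index $e$ with perfect residue field and $R_m(K_i^{\CU})\cong R_m(G_i)$ for all $m$; in particular $R_n(G_1)\cong R_n(K_1^{\CU})\cong R_n(K_2^{\CU})\cong R_n(G_2)$. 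Since $n>e(1+e^2)$, Fact~\ref{fact:lift_n-th_residue} yields $G_1\cong G_2$, hence $R_m(G_1)\cong R_m(G_2)$, and therefore $R_m(K_1)^{\CU}\cong R_m(G_1)\cong R_m(G_2)\cong R_m(K_2)^{\CU}$, so $R_m(K_1)\equiv R_m(K_2)$. Combined with $\Gamma_1\equiv\Gamma_2$ and Fact~\ref{fact:Basarab_result}, this gives $K_1\equiv K_2$. Beyond the reduction lemma, the proof uses only interpretability, Keisler--Shelah, and the two cited facts.
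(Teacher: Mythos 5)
Since Fact~\ref{fact:fined_AKE_Rn} is cited from \cite{Ba,LL} rather than proved in the text, the natural comparison is the paper's proof of its hyperfield analogue, Theorem~\ref{thm:AKE_hyperfield}$(3)\Rightarrow(1)$. Your argument is correct, and the reduction lemma checks out: in a finitely ramified valued field the least positive value $\theta$ exists, $\BZ\theta$ is the smallest nontrivial convex subgroup, the core field of the coarsening by $\BZ\theta$ is discretely valued with the same residue field, the same ramification index, and the same residue rings $R_m$ for every $m$ (via $R(\cfd)\cong R(K)/\fm_{\dnu}$), and completion leaves each finite quotient $R/\fm^m$ unchanged; so after Keisler--Shelah one is in the scope of Fact~\ref{fact:lift_n-th_residue}, which then yields $R_m(K_1)\equiv R_m(K_2)$ for all $m$, and Basarab's Fact~\ref{fact:Basarab_result} finishes. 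This route, however, differs from the one the paper deploys for Theorem~\ref{thm:AKE_hyperfield}: there the core fields are made complete by passing to $\aleph_1$-saturated ultrapowers (Fact~\ref{fact:coarse_valuation}(3)) rather than by completing by hand, and once the core fields are identified the paper does not return to the residue rings at all; it applies the Ax--Kochen--Ershov principle (Fact~\ref{fact:AKE}) directly to the coarsened fields $(K_i,\dnu_i)$, which are henselian of equal characteristic $(0,0)$, and then transfers elementary equivalence back to $(K_i,\nu_i)$ via B\'elair's definability of the valuation ring (Fact~\ref{fact:def_resiue_ring}). You trade AKE and B\'elair's definability for Basarab's theorem plus one extra ``for all $m$'' step; both are valid, and your version invokes somewhat lighter model-theoretic machinery at the back end while sharing the coarsening-and-lifting core.
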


\section{Lifting}\label{section:lifting}
In this section, we aim to show that for large enough $n$, any homomorphism over $p$ between the $n$-th valued hyperfields of discrete complete valued fields can be lifted to a homomorphism of given valued fields.

\begin{lemma}\label{lem:basic_on_hyperfield}
Let $a,b\in K$ and $a_0,\ldots,a_k\in K$. Fix $\gamma\in \Gamma_{\ge 0}$.
\begin{enumerate}
	\item $\bigcup [a]_{\gamma}=a(1+\fm^{\gamma})=\{x|\ \nu(x-a)\ge \gamma+\nu(a)\}$.
	\item $\bigcup [a]_{\gamma}\hp [b]_{\gamma}=\{x\in K|\ \nu(x-(a+b))> \gamma+\min\{\nu(a),\nu(b)\}\}$.
	\item $0\in \bigcup [a]_{\gamma}\hp [b]_{\gamma}$ if and only if $\bigcup [a]_{\gamma}\hp [b]_{\gamma}=\fm^{\gamma+\min\{\nu(a),\nu(b)\}}$. 
		\item $(a_0+\ldots+a_k)\in \hsum [a_i]_{\gamma}$.
	\item Suppose $b\in \bigcup \hsum [a_i]_{\gamma}$ and $a_0,\ldots,a_k\in R$. Then $b=(a_0+\ldots+a_k)+d$ for some $d\in \fm^{\gamma}$.

\end{enumerate}
\end{lemma}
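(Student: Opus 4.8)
The plan is to derive everything by unwinding Definition~\ref{def:vhf_valuedhyperfield} and applying the ultrametric inequality, with (2) carrying the real content and the others following from it. I would first dispatch (1) and (4). For (1): a typical element of $[a]_\gamma=a(1+\fm^\gamma)$ is $a+am$ with $\nu(m)>\gamma$ (or equals $a$), and $\nu(am)=\nu(a)+\nu(m)>\gamma+\nu(a)$, so $\bigcup[a]_\gamma$ is exactly the indicated ball around $a$. For (4): induct on $k$ using associativity of $\hp$ — if $[a_0+\cdots+a_{i-1}]_\gamma\in\hsum_{j<i}[a_j]_\gamma$, then choosing the representatives $a_0+\cdots+a_{i-1}$ and $a_i$ shows $[a_0+\cdots+a_i]_\gamma=[(a_0+\cdots+a_{i-1})+a_i]_\gamma$ lies in $[a_0+\cdots+a_{i-1}]_\gamma\hp[a_i]_\gamma\subseteq\hsum_{j\le i}[a_j]_\gamma$.

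For (2), "$\subseteq$": an element of $\bigcup([a]_\gamma\hp[b]_\gamma)$ is $(x+y)(1+m)$ with $x=a(1+m_1)$, $y=b(1+m_2)$ and $m,m_1,m_2\in\fm^\gamma$; since $\nu(1+m_i)=0$ we get $\nu(x)=\nu(a)$, $\nu(y)=\nu(b)$, hence $\nu(x+y)\ge\min\{\nu(a),\nu(b)\}$, and expanding $(x+y)(1+m)-(a+b)=am_1+bm_2+(x+y)m$ each summand has valuation exceeding $\gamma+\min\{\nu(a),\nu(b)\}$. For "$\supseteq$", assume without loss of generality $\nu(a)\le\nu(b)$ and, the case $a=0$ (hence $b=0$) being trivial, $a\ne 0$: given $z$ with $\nu(z-(a+b))>\gamma+\nu(a)$, set $x:=z-b$ and $y:=b$; then $\nu((x-a)/a)=\nu(z-(a+b))-\nu(a)>\gamma$, so $x\in a(1+\fm^\gamma)=[a]_\gamma$ while $y\in[b]_\gamma$ and $x+y=z$, which places $z$ inside $[a]_\gamma\hp[b]_\gamma$.

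Both (3) and (5) reduce to (2). For (3), "$\Leftarrow$" is immediate since $0\in\fm^{\delta}$ for $\delta:=\gamma+\min\{\nu(a),\nu(b)\}$; for "$\Rightarrow$", (2) shows $0$ lies in the sumset exactly when $\nu(a+b)>\delta$, and granting this the ultrametric inequality gives $\nu(x-(a+b))>\delta\iff\nu(x)>\delta$ for all $x$, so (2) identifies the sumset with $\fm^\delta$. For (5) I would induct on $k$: given $z\in\bigcup\hsum_{i\le k}[a_i]_\gamma$, pick $\alpha\in\hsum_{i<k}[a_i]_\gamma$ with $z\in\bigcup(\alpha\hp[a_k]_\gamma)$ and a representative $w$ of $\alpha$; since $w\in\bigcup\hsum_{i<k}[a_i]_\gamma$, the inductive hypothesis gives $w=(a_0+\cdots+a_{k-1})+d'$ with $d'\in\fm^\gamma$, so $w\in R$; then $\min\{\nu(w),\nu(a_k)\}\ge 0$, so the ball formula of (2) forces $\nu(z-(w+a_k))>\gamma$, i.e.\ $z=(a_0+\cdots+a_k)+(d'+d'')$ with $d''\in\fm^\gamma$ and hence $d'+d''\in\fm^\gamma$. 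The base case $k=0$ is (1) specialized to $a_0\in R$.

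The main obstacle is not deep but requires two points of care: the "$\supseteq$" half of (2), where one must notice that any prescribed point of the target ball splits as (the fixed representative $b$ of one summand) plus (the remainder $z-b$, which then automatically represents the right class of $a$); and the bookkeeping in (5), where one must verify that restricting the $a_i$ to $R$ keeps the radius $\gamma+\min\{\nu(\,\cdot\,),\nu(a_k)\}$ from ever dipping below $\gamma$ as the induction proceeds, and that the formula in (2) remains valid even when $0$ happens to land in an intermediate sumset.
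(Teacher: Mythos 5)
Your proposal is correct and matches the paper's approach: parts (1)–(3) are proved by the same direct ultrametric computations, and for (4) and (5) — which the paper dispatches with the single line ``(4) and (5) come from (2)'' — your inductions are exactly the natural way to fill in that assertion (and you correctly note that the stated inequality in (1) should be strict, as the paper's own computation shows). The only cosmetic difference is that your argument for (4) appeals directly to the definition of $\hp$ rather than to (2), but this is a negligible deviation.
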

\begin{proof}
Let $a,b\in K$ and $a_0,\ldots,a_k\in K$.

(1) For $x\in K$, we have that \begin{align*}
x\in a(1+\fm^{\gamma})&\Leftrightarrow x=a+ad,\ d\in \fm^{\gamma}\\
&\Leftrightarrow \nu(x-a)=\nu(ad)=\nu(a)+\nu(d), d\in \fm^{\gamma}\\
&\Leftrightarrow \nu(x-a)> \nu(a)+\gamma.
\end{align*}\\

(2) ($\subseteq$) Let $x\in \bigcup [a]_{\gamma}\hp [b]_{\gamma}$ so that thee are $c,d\in \fm^{\gamma}$ such that $x=a+b+ac+bd$. Then we have that $$\nu(x-(a+b))=\nu(ac+bd)\ge \min\{\nu(ac),\nu(bd)\}> \gamma+\min\{\nu(a),\nu(b)\}.$$

($\supseteq$) Take $x\in K$ such that $\nu(x-(a+b))>\gamma+\min\{\nu(a),\nu(b)\}$. WLOG we may assume that $\nu(a)\ge \nu(b)$. Then there is $c\in \fm^{\gamma}$ such that $x=(a+b)+bc=a+b(1+c)$, and $[x]_{\gamma}\in[a]_{\gamma}\hp[b]$. So, we conclude that $x\in \bigcup [a]_{\gamma}+_G [b]_{\gamma}$.\\

(3) By (2), $\bigcup [a]_{\gamma}\hp [b]_{\gamma}=(a+b)+\fm^{{\gamma}+\min\{\nu(a),\nu(b)\}}$. Thus we have that
\begin{align*}
0\in \bigcup [a]_{\gamma}\hp [b]_{\gamma}&\Leftrightarrow \nu(a+b)> {\gamma}+\min\{\nu(a),\nu(b)\}\\
&\Leftrightarrow (a+b)\in \fm^{{\gamma}+\min\{\nu(a),\nu(b)\}}\\
&\Leftrightarrow \bigcup [a]_{\gamma}\hp [b]_{\gamma}=\fm^{{\gamma}+\min\{\nu(a),\nu(b)\}}.
\end{align*}

(4) and (5) come from (2).
\end{proof}

\begin{proposition}\label{prop:over_p=over_Z}
Let $K_1$ and $K_2$ be valued fields whose residue fields are of characteristic $p>0$. Let $\H_{\gamma}(K_1)$ and $\H_{\lambda}(K_2)$ be valued hyper fields of $K_1$ and $K_2$ respectively. Let $f\in \Hom(\H_{\gamma}(K_1),\H_{\lambda}(K_2))$ be over $p$. Then for all $n\in \BZ$, $f([n])=[n]$. We denote $\Hom_{\BZ}(\H_{\gamma}(K_1),\H_{\lambda}(K_2))$ for the set of all homomorphisms over $p$.
\end{proposition}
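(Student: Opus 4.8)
The plan is to prove the statement by induction, but the argument splits naturally according to whether the integer is a unit in $R(K_1)$ or lies in the maximal ideal. First observe that $f([1]) = [1]$ and $f([0]) = [0]$ by the definition of a homomorphism, and $f([p]) = [p]$ by the hypothesis that $f$ is over $p$. From multiplicativity, $f([p^k]) = f([p])^k = [p]^k = [p^k]$ for all $k \ge 0$.

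Next I would handle positive integers $n$ by induction on $n$, using the additive structure. Writing $n = (n-1) + 1$, we have $[n] \in [n-1] \hp [1]$ in $\H_\gamma(K_1)$ by Lemma \ref{lem:basic_on_hyperfield}(4), hence $f([n]) \in f([n-1]) \hp f([1]) = [n-1] \hp [1]$ in $\H_\lambda(K_2)$, since $f$ respects the multivalued addition. By Lemma \ref{lem:basic_on_hyperfield}(1)--(2), the set $\bigcup [n-1] \hp [1]$ consists of those $x \in K_2$ with $\nu_{K_2}(x - n) > \lambda + \min\{\nu_{K_2}(n-1), 0\}$; if $\nu_{K_2}(n-1) \ge 0$ this forces $f([n]) = [n]$ outright when $n$ is a unit (the ball is centered at $n$ with positive radius, and $[n]$ is the unique class it meets once we know $n$ itself lies in it and the radius computation pins down the class). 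The subtlety is that when $p \mid n$, the ball $\bigcup [n-1]\hp[1]$ may contain several classes, so one cannot immediately conclude $f([n]) = [n]$ from membership alone; here I would instead factor out the power of $p$: write $n = p^a m$ with $p \nmid m$, use $f([n]) = f([p^a])f([m]) = [p^a] f([m])$, and reduce to the case $p \nmid n$, where the residue of $n$ is a unit and the class is determined. Finally, negative integers are handled by the inverse axiom: since $0 \in [n] \hp [-n]$ in $\H_\gamma(K_1)$, applying $f$ gives $0 \in [n] \hp f([-n])$ in $\H_\lambda(K_2)$, and by uniqueness of additive inverses in a hyperfield (Definition \ref{def:vhf}(1)(f)) together with $0 \in [n]\hp[-n]$ in $\H_\lambda(K_2)$, we get $f([-n]) = [-n]$.

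I expect the main obstacle to be exactly the case where $p$ divides the integer: there the naive "membership in a ball of small enough radius" argument fails because the relevant ball is too large to isolate a single class of $\H_\lambda(K_2)$. The clean fix is the multiplicative reduction $n = p^a m$ described above, which trades the additive ambiguity for the already-established fact $f([p]) = [p]$ and the unit case $p \nmid m$. One should also be slightly careful that the residue field having characteristic $p$ is what guarantees $p \nmid m \Rightarrow m \in R(K_i)^\times$, so that $[m]$ is a class on which the addition behaves rigidly; this is where the hypothesis on the residue characteristic enters. Modulo these points the argument is a routine induction, and the notational claim $\Hom_{\BZ}(\H_\gamma(K_1),\H_\lambda(K_2))$ for the set of homomorphisms over $p$ is then justified since being over $p$ is equivalent to fixing all of $\BZ$ (the image of $\BZ$ in either hyperfield).
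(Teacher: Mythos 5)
Your argument is correct, and it rests on the same two pillars as the paper's: the ball computation from Lemma \ref{lem:basic_on_hyperfield}, and the multiplicative reduction $n=p^a m$ (with $p\nmid m$) that exploits $f([p])=[p]$. Where the two diverge is in the organization of the unit case. The paper first establishes $f([k])=[k]$ for $1\le k\le p-1$ by the increment-by-one argument, and then for general $n$ coprime to $p$ it writes out the base-$p$ expansion $n=a_0+a_1p+\cdots+a_mp^m$, applies $f$ to the iterated hyperfield sum $\hsum[a_ip^i]$, and invokes Lemma \ref{lem:basic_on_hyperfield}(5) to get $f([n])=[n+d]$ with $d\in\fm_2^\lambda$, whence $[n+d]=[n]$ because $n$ is a unit. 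You instead run a strong induction on all positive integers at once: for $p\nmid n$ you use $[n]\in[n-1]\hp[1]$ and the identification of $\bigcup[n-1]\hp[1]$ with the single coset $[n]$ (valid since $\nu(n)=0$ and $\nu(n-1)\ge 0$), and for $p\mid n$ you reduce to the strictly smaller coprime part $m$ via multiplicativity. This dispenses with the base-$p$ expansion entirely, at the cost of needing the strong (rather than simple) induction to guarantee $f([n-1])$ and $f([m])$ are already known. Your treatment of negative integers via the uniqueness of additive inverses is also a mild variant of the paper's reduction through $f([-1])=[-1]$ and multiplicativity; both are equally valid. One small caution in your write-up: you should make explicit that the induction is a strong induction on $n$, since the step for $p\nmid n$ needs $f([n-1])=[n-1]$ even when $p\mid n-1$, and the step for $p\mid n$ needs $f([m])=[m]$ for the proper divisor $m=n/p^a$; as you have phrased it this is implicit but not stated.
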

\begin{proof}
Let $f\in \Hom(\H_{\gamma}(K_1),\H_{\lambda}(K_2))$ be over $p$. First, we have $f([1])=[1]$ because $[1]$ is the multiplicative identity. Since $[-1]$ is the additive inverse of $[1]$, $f([-1])=[-1]$. By Lemma \ref{lem:basic_on_hyperfield}(2), we have $f([k])=[k]$ for $1\le k\le p-1$ inductively. Now choose an integer $n$ arbitrary. Since $f([-1])=[-1]$ and $f([0])=[0]$, we may assume that $n> 0$. Suppose $n$ and $p$ are relatively prime. We write $n=a_0+a_1p+a_2+\ldots+a_mp^m$ with $0\le a_i\le p-1$ and $a_0\neq 0$ for some $m\ge 0$. Suppose $f([n])=[n']$ for some $n'\in K_2$. Then we have that
\begin{align*}
[n']&=f([n])=f([\sum_{i}\limits a_ip^i])\\
&\in \sum_i^{\H}f([a_ip^i])= \sum_i^{\H}f([a_i])f([p])^i=\sum_i^{\H}[a_i][p]^i=\sum_i^{\H}[a_ip^i].
\end{align*}
By Lemma \ref{lem:basic_on_hyperfield}(5), we have that $n'=\sum_i a_ip^i + d=n+d$ for some $d\in \fm_2^{\lambda}$. Since $n\notin \fm_2$, we have that $n'/n=1+d/n\in 1+\fm_2^{\lambda}$ and $n'(1+\fm_2^{\lambda})=n(1+\fm_2^{\lambda})$. Now suppose $p$ divides $n$. Write $n=n_0p^l$ for some $l>0$ and for some $n_0$ coprime to $p$. Then we have that $f([n])=f([n_0p^l])=f([n_0])f([p^l])=[n_0][p]^l=[n_0p^l]=[n]$.
\end{proof}

\begin{remark}\label{rem:vf_rigid_hom}
Let $(K_1,\nu_1,k_1)$ and $(K_2,\nu_2, k_2)$ be finitely ramified valued fields  having the same ramification index $e$. Let $p=\ch(k_1)=\ch(k_2)>0$. Suppose $\nu_1$ and $\nu_2$ are normalized, that is, $\nu_1(p)=\nu_2(p)=1$. Then for any $n,m\ge 1$ and $f\in \Hom_{\rg}(\H_n(K_1),\H_m(K_2))$, we have that $\nu_1(\alpha)=\nu_2(f(\alpha))$ for every $\alpha\in \H_n(K_1)$. And any homomorphism from $K_1$ to $K_2$ induces an isometric homomorphism from $\H_n(K_1)$ to $\H_n(K_2)$.
\end{remark} 

In the remaining part of this section, {\bf we assume that a complete discrete complete valued field of mixed characteristic $(0,p)$ has the normalized valuation so that $\nu(K^{\times})\subset \BR$ and $\nu(p)=1$}. For a henelian valued field $(K,\nu)$ of characteristic $0$, there is a unique valuation on $K^{alg}$ extending $\nu$ and we use the same notion $\nu$ for this valuation on $K^{alg}$.

\begin{remark}\label{rem:H1(S)_resideufield}
Let $S$ be the set of Teichm\"{u}ller representatives of a complete discrete valued field $K$ of mixed characteristic having a perfect residue field. Then, $\H_1(S)$ is a field which is isomorphic to the residue field of $K$.
\end{remark}
\begin{proof}
Let $H=(\H_1(K),+_1,\times_1)$ be the first valued hyperfield of $K$. For each $a,b\in S$, there is a unique $c\in S$ such that $[c]\in [a]+_1[b]$. So, $(\H_1(S),+,\times_1)$ forms a field where for $a,b\in S$, $[a]+[b]=[c]$ if $[c]\in [a]+_1[b]$. Consider a map sending $[a]$ to $a+\fm$, where $\fm$ is the maximal ideal of the valuation ring of $K$ and this induces an isomorphism from $\H_1(S)$ to the residue field of $K$.
\end{proof}

\begin{lemma}\label{lem:preserving_Teichmuller}
Let $K_1$ and $K_2$ be complete discrete valued fields of mixed characteristic having perfect residue fields. Let $S_1$ and $S_2$ be the set of Teichm\"{u}ller representatives of $K_1$ and $K_2$ respectively, and let $f\in \Hom(\H_{n}(K_1),\H_{m}(K_2))$. 
\begin{enumerate}
	\item We have $f(\H_{n}(S_1))\subset \H_{m}(S_2)$ and $f\restriction_{ \H_n(S_1)}$ is injective.
	\item If $f$ is over $p$, then $f\restriction_{\H_n(\W(k_1))}$ is induced by a unique homomorphism from $\W(k_1)$ to $\W(k_2)$, where $\W(k_1)$ and $\W(k_2)$ are Witt subrings of $K_1$ and $K_2$ respectively.
\end{enumerate}
\end{lemma}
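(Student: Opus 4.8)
Throughout I would exploit the Teichm\"uller splitting $R_i^\times=S_i^\times\times(1+\fm_i)$ and the explicit description of the hyper-sum in Lemma~\ref{lem:basic_on_hyperfield}. For part (1), first note that $f$ maps $\H_n(R_1^\times)$ into $\H_m(R_2^\times)$: membership of $\alpha$ in $\H_n(R_1^\times)$ is the condition $\nu(\alpha)\le\nu([1])$ and $\nu([1])\le\nu(\alpha)$, preserved by $f$ thanks to Definition~\ref{def:morphism_vhf}(4) and $f([1])=[1]$. Now $\H_n(R_1^\times)\cong S_1^\times\times U_n$ with $U_n:=(1+\fm_1)/(1+\fm_1^n)$; filtering $U_n$ by the $1+\fm_1^j$ exhibits it as an abelian $p$-group of exponent $\le p^{n-1}$, while the $p$-power map is bijective on $S_1^\times\cong k_1^\times$ because $k_1$ is perfect of characteristic $p$. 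Hence $\bigcap_{k\ge1}\bigl(\H_n(R_1^\times)\bigr)^{p^k}=\H_n(S_1^\times)$, and the same for $K_2,m$; since $f$ is multiplicative it preserves this intersection, so $f(\H_n(S_1))\subseteq\H_m(S_2)$ (the point $0$ is fixed by $f$), which is the containment assertion. For injectivity I would define $g\colon k_1\to k_2$ by sending the residue of $a\in S_1$ to the residue of $f([a]_n)\in\H_m(S_2)$, equivalently to the element $(\H^m_1\circ f)([a]_n)$ of the field $\H_1(S_2)\cong k_2$ (Remark~\ref{rem:H1(S)_resideufield}). Multiplicativity of $g$ is immediate; for additivity observe that $\H^m_1\circ f$ is a hyperfield homomorphism into $\H_1(K_2)$, whose multiplicative group, an extension of $\Gamma_2$ by $k_2^\times$, is $p$-torsion-free, so $\H^m_1\circ f$ kills the $p$-group $\H_n(1+\fm_1)$. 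Thus, for $a,a'\in S_1$ with $\overline a+\overline{a'}\ne0$ and $c\in S_1$ lifting $\overline a+\overline{a'}$, we have $(a+a')/c\in1+\fm_1$, so $(\H^m_1\circ f)([a+a']_n)=(\H^m_1\circ f)([c]_n)=g(\overline a+\overline{a'})$; on the other hand $[a+a']_n\in[a]_n\hp[a']_n$ by Lemma~\ref{lem:basic_on_hyperfield}(4), so the left side also lies in $g(\overline a)\hp g(\overline{a'})$, whose unique point in $\H_1(S_2)$ is $g(\overline a)+g(\overline{a'})$; this gives additivity, the case $\overline a+\overline{a'}=0$ being handled via $f([-1])=[-1]$ (forced by uniqueness of additive inverses). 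So $g$ is a field homomorphism, hence injective, and $f\restriction_{\H_n(S_1)}$ is injective because equality of images forces equality of residues, hence of the corresponding Teichm\"uller representatives.

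For part (2), assume $f$ is over $p$ (Definition~\ref{def:over_p}). By Fact~\ref{fact:witt_teichmuller}(1) the ring homomorphism $g\colon k_1\to k_2$ lifts to a unique ring homomorphism $\tilde g\colon\W(k_1)\to\W(k_2)$; concretely $\tilde g$ carries $S_1$ into $S_2$ compatibly with $g$. Uniqueness of an inducing homomorphism is then clear: any $h\colon\W(k_1)\to\W(k_2)$ with $f([w]_n)=[h(w)]_m$ reduces mod $\fm_2$ to $g$ (test on $w\in S_1$), hence $h=\tilde g$ by Fact~\ref{fact:witt_teichmuller}(1). For existence I would verify $f([w]_n)=[\tilde g(w)]_m$ for all $w\in\W(k_1)$: for $w\in S_1$ both sides lie in $\H_m(S_2)$ with residue $g(\overline w)$, hence coincide; for $w=p$ this is the over-$p$ hypothesis; and for a general nonzero $w$, reducing to $w$ a unit via $f([p]_n)=[p]_m$, I would take the Teichm\"uller expansion $w=\sum_{i\ge0}s_ip^i$, fix $M$ large enough that $p^M\in\fm_1^n$ and $p^M\W(k_2)\subseteq\fm_2^m$, truncate to get $[w]_n\in\hsum_{i<M}[s_ip^i]_n$ from Lemma~\ref{lem:basic_on_hyperfield}(4), apply $f$ together with multiplicativity and the settled $S_1$- and $p$-cases to land in $\hsum_{i<M}[\tilde g(s_i)p^i]_m$, and finally invoke Lemma~\ref{lem:basic_on_hyperfield}(5) to identify this single class with $[\sum_{i<M}\tilde g(s_i)p^i]_m=[\tilde g(w)]_m$ (the last step using $\tilde g(w)-\sum_{i<M}\tilde g(s_i)p^i\in p^M\W(k_2)\subseteq\fm_2^m$ and that both terms are units, since $g$ injective keeps $\tilde g(w)$ a unit).

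The step I expect to demand the most care is the additivity of $g$ in part (1) and the closely related reassembly at the end of part (2): in each place one must control the interaction between the genuinely multivalued hyper-addition — a ball of positive radius, by Lemma~\ref{lem:basic_on_hyperfield} — and the exact equalities coming from the Teichm\"uller and Witt structures, and in part (2) one must choose the truncation level $M$ large enough to absorb simultaneously the data of $(K_1,n)$ and of $(K_2,m)$. The rest is bookkeeping with the splitting $R_i^\times=S_i^\times\times(1+\fm_i)$ and the functoriality of Witt vectors.
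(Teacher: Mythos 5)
Your proof is correct, and for the containment in part~(1) it takes a genuinely different route from the paper. The paper shows $f(\H_n(S_1))\subset\H_m(S_2)$ by a limit argument: for $\lambda\in S_1$ it chooses representatives $\eta_s$ of $f([\lambda^{1/p^s}]_n)$, shows via a binomial estimate that $(\eta_s^{p^s})_s$ is Cauchy in $R_2$, and identifies the limit with an element of $S_2$ using the characterization of Teichm\"uller representatives as coherent systems of $p$-power roots (Fact~\ref{fact:witt_teichmuller}(3)). You instead stay entirely inside the finite quotients: using the splitting $R_i^\times=S_i^\times\times(1+\fm_i)$ and the elementary fact that $(1+\fm_i)/(1+\fm_i^n)$ is a $p$-group of exponent dividing $p^{n-1}$ while $p$-th power is bijective on $S_i^\times\cong k_i^\times$, you express $\H_n(S_1^\times)$ as $\bigcap_{k}\H_n(R_1^\times)^{p^k}$, which a multiplicative, $\nu$-order-preserving map automatically respects. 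Your approach buys you an argument with no completeness or convergence input at this step (completeness only enters via the existence of the Teichm\"uller splitting), whereas the paper's approach recovers the limit element in $R_2$ directly; both rest on the same underlying characterization of $S$, but yours packages it more cleanly at the level of the hyperfield. You are also more explicit than the paper about why the induced map $g\colon k_1\to k_2$ is additive --- the paper simply asserts, via Remark~\ref{rem:H1(S)_resideufield}, that the restriction induces a residue-field homomorphism, while you justify additivity by observing that $\H^m_1\circ f$ annihilates $(1+\fm_1)/(1+\fm_1^n)$ because $\H_1^\times(K_2)$ is $p$-torsion-free, and then comparing with the unique Teichm\"uller point in the ball $g(\bar a)\hp g(\bar a')$. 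Your part~(2) --- lifting $g$ to $\tilde g\colon\W(k_1)\to\W(k_2)$ by Fact~\ref{fact:witt_teichmuller}(1), truncating the Teichm\"uller expansion at a level absorbing both $\fm_1^n$ and $\fm_2^m$, and reassembling with Lemma~\ref{lem:basic_on_hyperfield}(4),(5) --- coincides in substance with the paper's argument (the paper truncates at $s=\max\{n,m\}$, which plays the same role as your $M$).
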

\begin{proof}
(1) The proof is similar to the proof of \cite[Lemma 3.3]{LL}. Let $R_1$ and $R_2$ be the valuation rings of $K_1$ and $K_2$ respectively. By Fact \ref{fact:witt_teichmuller}(3), $S_i$ is contained in $\W(k_i)^{\times}$ where $k_i$ is the residue field of $K_i$ for $i=1,2$. For each $\lambda \in S_1$, let $\eta_{s}$ be any representative of $f(\lambda^{1/p^s}(1+\fm_1^n))$ so that $(\eta_s)^{p^s}(1+\fm_2^m)=f(\lambda(1+\fm_1^n))$. Since $\eta_s$ is in $\W(k_2)^{\times}$, we have that $\eta_s(1+\fm_2^m)=\eta_s+\fm_2^m$ and $(\eta_s)^{p^s}(1+\fm_2^m)=(\eta_s)^{p^s}+\fm_2^m$ as a set. For any other representative $\theta_s$ of $f(\lambda^{1/p^s}(1+\fm_1^n))$, we have that $\eta_s+\fm_2^m=\theta_s+\fm_2^m$. If we write $\eta_s= \theta_s + \pi_2 ^{n_2}a$ for some $a$ in $R_2$, the following binomial expansion
\begin{align*}
\eta_s ^{p^s} &=(\theta_s + \pi_2 ^{n_2}a)^{p^s}\\
{}&=\theta_s ^{p^s} + p^s \theta_s ^{ p^s -1} \pi_2^{n}a+...+(\pi_2 ^{n}a)^{p^s}
\end{align*}
shows
$\eta_s ^{p^s} -\theta_s ^{p^s} \in \fm_2 ^{s} $. Since $\eta_{s+1} ^p$ is a representative of $ f(\lambda ^{1/{p^s}}(1 +\fm_1^{n}))$, the calculation above shows that $(\eta_s ^{p^s})$ is a Cauchy sequence and
$\lim_{s\rightarrow \infty} \eta_s ^{p^s}$ is well-defined in $R_2$. Since $ \eta_s ^{p^s}(1 +\fm_2^{m}) =f(\lambda(1+\fm_1^{n}))$ and $1+\fm_2^{m}$ is topologically closed in $R_2$,
$$
f\left(\lambda(1+\fm_1^{n}) \right)= \left(\lim_{s\rightarrow \infty} \eta_s ^{p^s}\right)(1 +\fm_2^{m}).
$$
Similarly, we have
$$
f\left((\lambda)^{1/p}(1+\fm_1^{n}) \right)= \left(\lim_{s\rightarrow \infty} \eta_s ^{p^{s-1}}\right)(1 +\fm_2^{m}).
$$
Since
$$
\lim_{s\rightarrow \infty} \eta_s ^{p^s} =
\left(\lim_{s\rightarrow \infty} \eta_s ^{p^{s-1}}\right)^p,
$$
we obtain
$$
\lim_{s\rightarrow \infty} \eta_s ^{p^s}\in S_2
$$
by \ref{fact:witt_teichmuller}(3). Therefore, we have that $f(\H_n(S_1))\subset \H_m(S_2)$.

By Remark \ref{rem:H1(S)_resideufield}, the restriction map $f\restriction \H_1(S_1)$ induces an homomorphism between residue fields of $K_1$ and $K_2$. So, we have that $f\restriction \H_1(S_1)$ is trivial or injective. Since $1\in S_1$, we have that $f([1]_n)=[1]_m$ and $f\restriction \H_1(S_1)$ is not trivial. Therefore, $f\restriction \H_1(S_1)$ must be injective.\\

(2) Suppose $f$ is over $p$, that is, $f([p]_n)=[p]_m$. Note that each $a\in \W(k_i)$ is uniquely written as $\sum_{k\ge 0}\limits a_kp^k$ for  $a_k\in S$ for $i=1,2$. By Fact \ref{fact:witt_teichmuller}(1), Remark \ref{rem:H1(S)_resideufield}, and $(1)$, we have a homomorphism $\bar f:\W(k_1)\rightarrow \W(k_2)$ such that $f([a]_n)=[\bar f(a)]_m$ for $a\in S_1$. Take $a \in \W(k_1)$ and write $a=p^l\sum_{k\ge 0}\limits a_k p^k$ for $l\ge 0$ and $a_k\in S_1$ with $a_0\neq 0$. We have that
\begin{align*}
f([a]_n)&=[p]_m^lf([\sum_{k\ge 0} a_k p^k])_m\\
&=[p]_m^lf([a_0+a_1p+a_2p^2+\cdots+a_sp^s])_m,
\end{align*}
where $s=\max\{n,m\}$. It is enough to show that $$f([\sum_{0\le k\le s}\limits a_kp^k]_n)=[\sum_{0\le k\le s}\limits \bar f(a_k)p^k]_m$$ for $a_0\neq 0$ and $s\ge \max\{n,m\}$. Take $a=\sum_{0\le k\le s}\limits a_kp^k\in \W(k_1)$ with $a_0\neq 0$ and $s\ge \max\{n,m\}$. Then,
\begin{align*}
f([a])&\in \sum_{0\le k\le s}^{\H}\limits f([a_k]_n)[p]_m^k\\
&=\sum_{0\le k\le s}^{\H}\limits [\bar f(a_k)]_m[p]_m^k\\
&=\sum_{0\le k\le s}^{\H}\limits [\bar f(a_k)p^k]_m.
\end{align*}
So, we have that 
\begin{align*}
f([a]_n)&=[\sum_{0\le k\le s}\limits \bar f(a_k)p^k+d]_m\\
&=[\bar f(a)+d]
\end{align*}
for some $d\in \fm_2^m$ by Lemma \ref{lem:basic_on_hyperfield}(5). Since $(\bar f(a)+d)/\bar f(a)=1+d/\bar f(a)\in 1+\fm_2^m$, we have that $[\bar f(a)+d]_m=[\bar f(a)]_m$. Therefore, $f([a]_n)=[\bar f(a)]_m$ for each $a\in \W(k_1)$.
\end{proof}

\subsection{Tamely ramified case}\label{subsection:tamely_ramifieid}
We first recall some embedding lemma in \cite{Kuh} for tame algebraic extensions of valued fields. We say that an algebraic extension $(L,\nu_L)$ of a henselian valued field $(K,\nu_K)$ is {\em tame} if the following conditions hold: For every finite subextension $(F,\nu_F)$ of $(L,\nu_L)$ over $(K,\nu)$,
\begin{itemize}
	\item the residue field extension $k_L$ over $k_K$ is separable;
	\item if the characteristic of $k_K$ is $p>0$, then the ramification index $(\Gamma_F:\Gamma_K)$ is prime to $p$; and
	\item $[F:K]=[k_F:k_K](\Gamma_F:\Gamma_K)$.
\end{itemize}
If $K$ is a complete discrete tamely ramified valued field with a perfect residue field $k$, then $K$ is the tame extension of the fraction field of $\W(k)$.
\begin{fact}\cite[Lemma 3.1]{Kuh}\label{fact:embedding_tame}
Let $K$ be an arbitrary valued field. Let $L$ be an algebraic tame extension of some henselization of $K$ and $F$ be an arbitrary henselian extension of $K$. Any $K_{1}$-embedding from $L_{1}$ to $F_{1}$ is induced from a $K$-embedding from $L$ to $F$. Furthermore if $k_K=k_L$, then any (group) $G_K^1$-embedding from $G_L^1$ to $G_F^1$ is induced from a $K$-embedding from $L$ to $F$.
\end{fact}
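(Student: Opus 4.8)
The plan is to construct the desired $K$-embedding $L\to F$ by transfinite recursion along the tower $L/K$. First I would reduce to the case that $K$ is henselian, replacing $K$ by a henselization sitting inside $F$; this does not change $K_1$, so the given $K_1$-embedding $\iota\colon L_1\to F_1$ is unaffected and remains ``over $K_1$''. Next I would apply Zorn's lemma to the poset of pairs $(M,\sigma_M)$, where $K\subseteq M\subseteq L$ is an intermediate valued field and $\sigma_M\colon M\to F$ is a $K$-embedding whose induced map on amc-structures is $\iota|_{M_1}$, ordered by extension; chains have upper bounds (take unions of fields and of maps), and $(K,\mathrm{incl})$ shows the poset is nonempty. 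A maximal pair $(M,\sigma_M)$ must satisfy $M=L$: otherwise pick $\alpha\in L\setminus M$ and a finite subextension $M'$ of $L/M$ containing $\alpha$ (automatically tame), and it then suffices to extend $\sigma_M$ across one ``elementary'' step in the tower $M'/M$. By the classical structure theory of finite tame extensions of a henselian valued field, $M'/M$ is a tower of steps of two kinds, which I would treat in turn.

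\emph{Unramified step.} Here $M'=M(\theta)$ with $\bar\theta\in k_{M'}$ a separable generator of $k_{M'}$ over $k_M$, and $\theta$ chosen so that its minimal polynomial over $M$ reduces modulo $\fm_M$ to that of $\bar\theta$ (possible since $M$ is henselian). From $\iota|_{M'_1}$ I read off a separable embedding $k_{M'}\hookrightarrow k_F$ extending the one already induced by $\sigma_M$; the image of $\bar\theta$ is a simple root of the reduction of $\sigma_M(\mathrm{minpoly}(\theta))$, so by Hensel's lemma it lifts to some $\theta'\in F$, and I set $\sigma_{M'}(\theta):=\theta'$. Since $\Gamma_{M'}=\Gamma_M$, the new content of $M'_1$ over $M_1$ is purely residual, so $\sigma_{M'}$ induces $\iota|_{M'_1}$ by construction.

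\emph{Tamely totally ramified step.} Here $M'=M(c)$ with $c^{e}=d\in M^{\times}$, $e=[M':M]$ prime to $p=\ch(k)$, and the class of $\nu(c)$ generating $\Gamma_{M'}/\Gamma_M$. Writing $g:=\iota([c])\in G_F^{1}$, the inductive hypothesis that $\sigma_M$ induces $\iota|_{M_1}$ gives $g^{e}=[\sigma_M(d)]$ in $G_F^{1}=F^{\times}/(1+\fm_F)$, so $c_0^{e}=\sigma_M(d)(1+m)$ for some $c_0\in F^{\times}$ and $m\in\fm_F$. Because $e$ is a unit in the residue field, Hensel's lemma produces an $e$-th root of $1+m$ in $F$, hence an $e$-th root $c'$ of $\sigma_M(d)$ in $F$; the $e$-th roots of $\sigma_M(d)$ in $F$ form the coset $\mu_{e}(F)\,c'$, and since $\mu_{e}(F)\to\mu_{e}(k_F)$ is bijective while the $e$-torsion of $G_F^{1}$ equals $\mu_{e}(k_F)$ (the value group being torsion-free), exactly one such root satisfies $[c']=g$; I choose that one. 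As $X^{e}-d$ is the minimal polynomial of $c$ over $M$ and $\sigma_M$ carries it to $X^{e}-\sigma_M(d)$, which has the root $c'\in F$, the assignment $\sigma_{M'}(c):=c'$ defines a $K$-embedding $M'\to F$ extending $\sigma_M$; and since $k_{M'}=k_M$ while $G_{M'}^{1}$ is generated over $G_M^{1}$ by $[c]$ with $[\sigma_{M'}(c)]=g=\iota([c])$, it induces $\iota|_{M'_1}$. This completes the recursion and proves the first assertion. For the ``furthermore'': if $k_K=k_L$ then $L/K$ is tamely totally ramified, so only steps of the second kind occur, and each of those uses only the multiplicative datum $G^{1}$; hence a $G_K^{1}$-embedding $G_L^{1}\to G_F^{1}$ already suffices to drive the same recursion.

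\emph{Main obstacle.} The crux is the tamely totally ramified step: one must be able to select the radical $c'$ inside $F$ so that its class in $G_F^{1}$ is exactly the prescribed $\iota([c])$, and then verify that with this choice $\sigma_{M'}$ really does induce $\iota$ on the whole amc-structure, which forces one to carry the invariant ``$\sigma_M$ induces $\iota|_{M_1}$'' through every stage of the recursion. The two inputs that make this go through are the bijectivity of $\mu_{e}(F)\to\mu_{e}(k_F)$ (equivalently, the identification of the $e$-torsion of $G_F^{1}$ with $\mu_e(k_F)$, using $p\nmid e$ and torsion-freeness of the value group) and the decomposition of an arbitrary finite tame extension of a henselian field into the two elementary steps above; for the latter I would lean on the standard theory of tame and defectless valued-field extensions rather than reprove it.
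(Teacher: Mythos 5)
This statement is quoted (as a ``Fact'') from Kuhlmann~\cite{Kuh} and the paper supplies no proof of its own, so there is no internal argument to compare yours against; what follows is an assessment of your reconstruction on its merits. Your overall plan is the expected one: reduce to henselian $K$ (which is legitimate because the henselization is an immediate extension, so both the $R^2$- and $G^1$-components of the amc-structure are unchanged), run Zorn's lemma along the tower, and extend across the two elementary step types of a finite tame extension of a henselian field. The delicate point you flag yourself — choosing the radical $c'$ so that $[c']=\iota([c])$ — you handle correctly: torsion-freeness of $\Gamma_F$ plus $p\nmid e$ identifies the $e$-torsion of $G_F^1$ with $\mu_e(F)\cong\mu_e(k_F)$, so exactly one $e$-th root of $\sigma_M(d)$ lands in the prescribed class.

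The genuine gap is in the closing claim of each step that ``$\sigma_{M'}$ induces $\iota|_{M'_1}$.'' You verify agreement only on $G^1_{M'}$, but $M'_1$ also carries the truncated residue ring $R^2_{M'}$, and $\Theta_1$ is \emph{not} injective on its whole domain (for $a,b$ with $\nu(a)=\nu(b)<\nu(t)$ one can have $a(1+t\fm)=b(1+t\fm)$ yet $a\not\equiv b$ mod $t^2\fm$), so agreement on $G^1$ alone does not formally force agreement on $R^2$. You need an extra sentence in both step types: that $R^2_{M'}$ is generated as a ring over the image of $R^2_M$ by the class of the newly adjoined element ($\theta$ or $c$), and that compatibility of $\sigma_{M'}$ and $\iota$ with $\Theta_1$ then propagates the $G^1$-agreement on that generator to an $R^2$-agreement, after which it propagates to the whole ring. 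This is routine but it is exactly where the full amc-data (rather than just the group $G^1$) enters the first assertion, and it is precisely what becomes unnecessary in the ``furthermore'' case $k_K=k_L$, where your observation that the recursion touches only $G^1$-data and only radical steps is correct.
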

\noindent By adapting the ideas of the proofs of \cite[Lemma 3.1]{Kuh}, we prove the following theorems.

\begin{theorem}\label{thm:tame_vf_vhf}
Let $K_1$ and $K_2$ be complete discrete valued fields of mixed characteristic $(0,p)$ with perfect residue fields. Suppose $K_1$ is tamely ramified. Any homomorphism over $p$ from $\H_n(K_1)$ to $\H_m(K_2)$ is induced from a unique homomorphism from $K_1$ to $K_2$. From this, we conclude that there is one-to-one correspondence between $\Hom_{\rg}(\H_n(K_1),\H_m(K_2))$ and $\Hom(K_1,K_2)$.
\end{theorem}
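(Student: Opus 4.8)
The plan is to put the tamely ramified field $K_1$ into a very concrete form, transport the Witt‑vector part of $f$ through Lemma~\ref{lem:preserving_Teichmuller}(2), and then lift a uniformizer of $K_1$ by Hensel's lemma — the step where the tameness assumption $p\nmid e$ is indispensable. Throughout I normalize so that $\nu_1(p)=\nu_2(p)=1$; since $f$ is over $p$, the order‑embedding of value groups it induces fixes $1$, hence $\nu_2(f(\alpha))=\nu_1(\alpha)$ for all $\alpha$ (cf.\ Remark~\ref{rem:vf_rigid_hom}). Write $e=e_{\nu_1}(p)$, so $p\nmid e$. By the structure theorem for tamely ramified complete discrete valuation rings together with Hensel's lemma (available because $p\nmid e$), I choose a uniformizer $\pi$ of $K_1$ with $\pi^{e}=pu$ for some $u\in\W(k_1)^{\times}$; then, using Fact~\ref{fact:witt_teichmuller}(2), $K_1=L_0(\pi)$ where $L_0=\operatorname{Frac}\W(k_1)$, the polynomial $T^{e}-pu$ is the minimal polynomial of $\pi$ over $L_0$, and $R(K_1)=\bigoplus_{j=0}^{e-1}\W(k_1)\pi^{j}$. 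Finally, by Lemma~\ref{lem:preserving_Teichmuller}(2), the restriction $f\restriction_{\H_n(\W(k_1))}$ is induced by a unique ring homomorphism $\bar f\colon\W(k_1)\to\W(k_2)$, which extends to $L_0\to\operatorname{Frac}\W(k_2)\subseteq K_2$ (still denoted $\bar f$).

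Next I construct the lift $g\colon K_1\to K_2$. Let $\beta_0\in K_2$ represent $f([\pi]_n)$. Multiplicativity of $f$ and the previous paragraph give $[\beta_0^{\,e}]_m=f([\pi^{e}]_n)=f([p]_n)\,f([u]_n)=[p\,\bar f(u)]_m$, so $\beta_0^{\,e}=p\,\bar f(u)\,(1+y)$ with $y\in\fm_{K_2}^{m}$. Since $p\nmid e$ and $K_2$ is henselian, the polynomial $T^{e}-(1+y)$ has a root $t\in K_2$ with $t\equiv1\pmod{\fm_{K_2}^{m}}$; putting $\beta:=\beta_0/t$ we get $\beta^{e}=p\,\bar f(u)$ exactly, while $[\beta]_m=[\beta_0]_m=f([\pi]_n)$. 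As $\beta$ is a root of the $\bar f$‑image $T^{e}-p\,\bar f(u)$ of the minimal polynomial of $\pi$ over $L_0$, there is a unique field homomorphism $g\colon K_1\to K_2$ with $g\restriction_{L_0}=\bar f\restriction_{L_0}$ and $g(\pi)=\beta$. A direct valuation computation — $\nu_2(\beta)=\tfrac1e\nu_2(p\,\bar f(u))=\tfrac1e=\nu_1(\pi)$, and $g$ sends $R(K_1)=\bigoplus_{j<e}\W(k_1)\pi^{j}$ into $R(K_2)$ and units to units — shows $\nu_2\circ g=\nu_1$, so $g\in\Hom(K_1,K_2)$, and $g$ is over $p$ because it is a field homomorphism of characteristic‑zero fields. (Alternatively one may produce $g$ from Fact~\ref{fact:embedding_tame} applied to $K_1/L_0$, after extracting from $f$ a $G_{L_0}^1$‑embedding $G_{K_1}^1\to G_{K_2}^1$.)

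Now I verify that $g$ induces $f$, and the uniqueness. For $x\in R(K_1)$ write $x=\sum_{j=0}^{e-1}a_j\pi^{j}$ with $a_j\in\W(k_1)$; since $f$ is a hyperfield homomorphism, $f([x]_n)\in\hsum_{j}f([a_j]_n)\,f([\pi]_n)^{j}=\hsum_{j}[\bar f(a_j)\beta^{j}]_m$. The valuations $\nu_2(\bar f(a_j)\beta^{j})=\nu_1(a_j)+j/e$ are pairwise distinct for $0\le j<e$, so no cancellation occurs in $\sum_j\bar f(a_j)\beta^{j}$ and, by Lemma~\ref{lem:basic_on_hyperfield}, the multivalued sum $\hsum_{j}[\bar f(a_j)\beta^{j}]_m$ consists of the single class $[\sum_j\bar f(a_j)\beta^{j}]_m=[g(x)]_m$; hence $f([x]_n)=[g(x)]_m$, and clearing a power of $p$ extends this to all $x\in K_1$. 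For uniqueness, if $g'\in\Hom(K_1,K_2)$ also induces $f$, then $g'\restriction_{\W(k_1)}$ lifts the same residue‑field homomorphism, hence equals $\bar f$ by Fact~\ref{fact:witt_teichmuller}(1); moreover $g'(\pi)$ is a root of $T^{e}-p\,\bar f(u)$ with $g'(\pi)/g(\pi)\in\mu_e(K_2)\cap(1+\fm_{K_2}^{m})$, and since $p\nmid e$ reduction is injective on $\mu_e(K_2)$, so $g'(\pi)=\beta$ and $g'=g$. Thus $f\mapsto g$ is a well‑defined injection $\Hom_{\rg}(\H_n(K_1),\H_m(K_2))\hookrightarrow\Hom(K_1,K_2)$; conversely each $g$ in the target is induced from an $f$ in the source (the map $[x]_n\mapsto[g(x)]_m$, single‑valued by the collapsing‑sum argument), so this injection is a bijection, as claimed.

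The main obstacle is the second paragraph: replacing the mod‑$\fm_{K_2}^{m}$ approximate lift $\beta_0$ of $\pi$ by an \emph{exact} $e$‑th root $\beta$ of $p\,\bar f(u)$ in $K_2$. This is precisely where tameness is used — extracting $e$‑th roots is a Hensel (smooth) operation only when $p\nmid e$ — and it is also what lets the theorem hold for \emph{all} $n,m\ge1$ rather than only for $n$ large, as in Fact~\ref{fact:lift_n-th_residue}. The secondary, hyperfield‑specific point is checking that $g$ induces $f$ on the nose and not merely modulo $\fm_{K_2}^{m}$; this works only because the totally ramified structure forces the relevant multivalued sums to collapse to single classes.
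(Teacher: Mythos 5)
Your argument follows the paper's proof: both lift the Witt-vector part by Lemma~\ref{lem:preserving_Teichmuller}(2), write $K_1=F_1(\sqrt[e]{pu})$ using the structure theory of totally tamely ramified extensions, and use Hensel's lemma on an $e$-th power equation (available precisely because $p\nmid e$) to upgrade an approximate image of the uniformizer into an exact root of $T^e-p\bar f(u)$. You additionally spell out two verifications the paper leaves implicit — that the resulting $g$ really induces $f$ (via the distinct-valuation, collapsing-sum observation) and that $g$ is unique — which are useful and correct.
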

\begin{proof}
Take $f\in \Hom_{\rg}(\H_n(K_1),\H_m(K_2))$. Let $F_i$ be the fraction field of $\W(k_i)$ where $k_i$ is the residue field of $K_i$ for $i=1,2$. By Lemma \ref{lem:preserving_Teichmuller}(2), $f\restriction_{\H_1(F_1)}$ is induced from a homomorphism $\bar f:F_1\rightarrow F_2$. Since $K_1$ is a totally tamely ramified extension of $F_1$, $K_1=F_1(\sqrt[e]{pa})$ for some $a$ in $\W(k_1)^{\times}$ where $e$ is the ramification index of $K_1$ (c.f. Chapter $2$ of \cite{L}). Let $\pi_1=\sqrt[e]{pa}$ and $f([\pi_1]_n)=[\pi_2']_m$ for some $\pi_2'\in K_2$ so that $[\pi_2']_m^e=[p\bar f(a)]_m$. Consider a polynomial $P(X)=X^e-(p\bar f(u))/\pi_2'^e\in K_2[X]$. Since $P(1)\in \fm_2^m$ and $P'(1)\notin \fm_2$, by Hensel's lemma, there is unique $b\in R_2^{\times}$ such that $b^e=(p\bar f(a))/\pi_2'^e$ and $(b-1)\in \fm_2^m$. Let $\pi_2:= b\pi_2'$ so that $\pi_2^e=p\bar f(a)$. Note that $[\pi_2]_m=[\pi_2']_m$ and $\pi_2$ is such a unique zero of the polynomial $X^e-p\bar f(a)$. So we have a homomorphism $\tilde{f}:K_1\rightarrow K_2$ extending $\bar f\cup\{(\pi_1,\pi_2)\}$ and it induces $f$.
\end{proof}
We generalize Theorem \ref{thm:tame_vf_vhf} to the case of infinitely tamely ramified valued fields. To do this, we first recall the Ax-Sen-Tate theorem.
\begin{definition}\label{def:Ax_diameter_conjugate}\cite{A}
Let $(K,\nu)$ be a henselian valued field and $(K^{alg},\nu)$ be the algebraic closure of $K$. For $a\in K^{alg}$, define $$\Delta_K(a):=\min \{\nu(\sigma(a)-a)|\ \sigma\in G_K \},$$ where $G_K$ is the Galois group of $K^{alg}$ over $K$.
\end{definition}
\begin{fact}\label{fact:Ax_zeros_approximation}\cite[Proposition 1, Proposition 2']{A}
Let $(K,\nu)$ be a complete valued field and $(K^{alg},\nu)$ be the algebraic closure of $K$. Suppose $\Gamma_{K^{alg}}$ is archimedean. Let $K\subset F\subset K^{alg}$.
\begin{enumerate}
	\item Suppose $K$ is of mixed characteristic $(0,p)$. Then for all $a\in K^{alg}$, there exists $b\in F$ such that $$\nu(a-b)\ge \Delta_F(a)-(p/(p-1)^2)\nu(p).$$
	\item Suppose $K$ is of equal characteristic $p\ge 0$. Then for all $a\in K^{alg}$ and for all $\gamma\in (\Gamma_{K^{alg}})_{> 0}$, there exists $b$ in the perfect closure of $F$ such that $$\nu(a-b)\ge \Delta_F(a)-\gamma.$$
\end{enumerate}
\end{fact}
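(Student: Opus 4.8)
This statement is the classical Ax--Sen--Tate approximation theorem, so the plan is to reconstruct Ax's argument; I treat the mixed-characteristic case (1), the equal-characteristic case (2) being a variant (and trivial when $p=0$). Since $a$ is algebraic over $F$, only finitely many conjugates of $a$ occur, so $\Delta_F(a)=\min_{\sigma\in\mathrm{Gal}(L/F)}\nu(\sigma a-a)$, where $L$ is the Galois closure of $F(a)/F$; write $c:=\Delta_F(a)$ and $G:=\mathrm{Gal}(L/F)$. First I would strip off the tame part: let $P\leq G$ be a $p$-Sylow subgroup and $M:=L^{P}$, so $[M:F]$ is prime to $p$. It suffices to produce $b'\in M$ with $\nu(a-b')\geq c-\tfrac{p}{(p-1)^2}\nu(p)$, for then $b:=\tfrac{1}{[M:F]}\mathrm{Tr}_{M/F}(b')\in F$ does the job: one checks $\Delta_F(b')\geq\min\{\nu(a-b'),c\}$, and since $M/F$ is separable one has $b'-b=\tfrac{1}{[M:F]}\sum_{\tau}(b'-\tau b')$ over the embeddings $\tau$ of $M$, each summand of valuation $\geq\Delta_F(b')$ while $\nu([M:F])=0$, so $\nu(a-b)\geq\min\{\nu(a-b'),\Delta_F(b')\}$ is as wanted. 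Replacing $F$ by $M$ (note $\Delta_M(a)\geq\Delta_F(a)=c$), we may assume $G$ is a $p$-group; the whole content is now the wildly ramified case.

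Here is where I expect the only real obstacle. The averaged trace $\tfrac{1}{[L:F]}\mathrm{Tr}_{L/F}(a)$ yields merely $\nu(a-b)\geq c-\nu([L:F])$, which degrades with the degree, so one must invoke the Tate--Sen normalized-trace machinery. Using a composition series of the $p$-group $G$, build a tower $F=F_0\subset F_1\subset\cdots\subset F_n=L$ with each $F_{i+1}/F_i$ cyclic of degree $p$, together with a continuous $F$-linear projection $R\colon L\to F$; then $a-R(a)\in\ker R$ with $R(a)\in F$, and on $\ker R$ the operator $\sigma-1$ (for a suitable $\sigma\in G$) is invertible with operator norm bounded \emph{independently of $n$}. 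The quantitative heart is that the ramification breaks of the steps $F_{i+1}/F_i$, measured in the fixed valuation $\nu$, stay comparable to $\tfrac{1}{p-1}\nu(p)$ at every level -- essentially because $\mathfrak{d}_{F_n/F}$ differs from $n\,\nu(p)$ by an amount bounded uniformly in $n$ -- so the norm of $(\sigma-1)^{-1}$ on $\ker R$ is a supremum over levels, not a growing sum; the sharp form of this estimate produces exactly the constant $\tfrac{p}{(p-1)^2}\nu(p)=\sum_{k\geq1}k\,p^{-k}\,\nu(p)$. Applying it to $a$, for which $\nu((\sigma-1)a)=\nu(\sigma a-a)\geq c$, gives $\nu(a-R(a))\geq c-\tfrac{p}{(p-1)^2}\nu(p)$ with $R(a)\in F$, finishing (1).

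For (2), with $p>0$ the trace trick is unavailable, so one exploits the additivity of $x\mapsto x^{p}$: take a crude approximation $b_0\in F$ with $\nu(a-b_0)\geq c-\kappa$ for some finite $\kappa$ (for instance bounded in terms of $\nu(\mathfrak{d}_{L/F})$, obtained by descending a tower and taking leading terms of uniformizer expansions). Then $(a-b_0)^{p}$ satisfies $\Delta_F\bigl((a-b_0)^{p}\bigr)=p\,\Delta_F(a)\geq pc$, so approximating it in $F$ and extracting a $p$-th root inside $F^{\mathrm{perf}}$ yields $b_1\in F^{\mathrm{perf}}$ with $\nu\bigl((a-b_0)-b_1\bigr)\geq c-\kappa/p$; iterating $m$ times gives $b=\sum_{j\leq m}b_j\in F^{\mathrm{perf}}$ with $\nu(a-b)\geq c-\kappa/p^{m}$, hence $\geq c-\gamma$ as soon as $p^{m}\gamma\geq\kappa$. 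The one genuinely hard ingredient throughout remains the uniform-in-degree control of the wild step in (1); everything else is formal bookkeeping.
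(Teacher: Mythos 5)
This statement is a \textbf{Fact} the paper cites from Ax \cite{A} without proof, so there is no internal argument to compare against; I can only assess your reconstruction on its own terms.

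Your reduction to the wildly ramified case is correct and clean: stripping the prime-to-$p$ part via a Sylow subgroup $P$, passing to $M=L^P$, and descending an approximation $b'\in M$ to $b=\tfrac{1}{[M:F]}\mathrm{Tr}_{M/F}(b')\in F$ using $\nu([M:F])=0$ and the estimate $\Delta_F(b')\ge\min\{\nu(a-b'),c\}$ all check out, and the fact that $\Delta_M(a)\ge\Delta_F(a)$ lets you safely replace $F$ by $M$.

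The $p$-group case is where the proposal has a real gap. You posit a single $\sigma\in G$ such that $\sigma-1$ is invertible on $\ker R$ with operator norm bounded independently of the tower length. For that you need $G$ to be (pro)cyclic --- that is exactly the situation in the Tate--Sen theory of $\mathbb{Z}_p$-extensions. But a finite $p$-group is typically not cyclic: for $G\cong(\mathbb{Z}/p)^2$ no single $\sigma$ acts with $F$ as its full fixed space, so $\sigma-1$ already has nontrivial kernel on any complement of $F$ and cannot be inverted there. One must instead go step by step through the composition series, approximating over $F_{n-1}$, then over $F_{n-2}$, etc.; the naive ``lose $\tfrac{1}{p-1}\nu(p)$ per step'' produces a bound growing linearly in $n$, and the entire content of Ax's Proposition~1 is that the losses compound geometrically rather than additively, yielding the finite sum $\tfrac{p}{(p-1)^2}\nu(p)$. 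Your sketch asserts this uniform control but does not supply the ramification-filtration estimate (Ax's elementary lemma on degree-$p$ extensions together with the inductive bookkeeping) that makes it true. Ax's own argument is not the Tate--Sen normalized trace; it is an elementary induction built on a $p$-th-root/Artin--Schreier approximation lemma, and the constant $p/(p-1)^2$ falls out of a specific telescoping computation, not from an operator-norm bound. You have identified the right ``hard part'' but not actually closed it.

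For part (2) the Frobenius-iteration scheme is the correct idea, and the computation $p\,\nu(a'-b_1)=\nu((a')^p-b_1')$ is valid in characteristic $p$. However the argument needs two things you leave vague: (i) an initial approximation with \emph{some} finite defect $\kappa$ (your parenthetical appeal to $\nu(\mathfrak{d}_{L/F})$ is plausible but you do not show such a $b_0$ exists --- note that for $p>0$ the trace-averaging trick from the tame step is unavailable), and (ii) that after replacing $a$ by $a-(b_0+b_1)$ you may apply the same crude bound with the \emph{same} $\kappa$, which requires $\kappa$ to be uniform over the relevant elements (not, say, to grow with the degree of $(a'')^p$). Without pinning these down the iteration does not converge as claimed.
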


\begin{fact}[Ax-Sen-Tate Theorem]\label{fact:structure_complete_subfields}\cite[Proposition 3.8]{FO}
Let $(K,\nu)$ be a complete valued field and $(K^{alg},\nu)$ be the algebraic closure of $K$ with $\Gamma_{K^{alg}}$ archimedean. Let $C$ be the completion of $K^{alg}$ which is algebraically closed and let $L$ be a perfect complete subfield of $C$ containing $K$. Then $L$ is the completion of $L\cap K^{alg}$.
\end{fact}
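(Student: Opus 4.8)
The easy inclusion $\widehat{L\cap K^{alg}}\subseteq L$ (where $\widehat{\,\cdot\,}$ denotes closure inside $C$) is immediate from $L\cap K^{alg}\subseteq L$ and completeness of $L$, so the whole content is the reverse inclusion. The plan is to isolate one \emph{Ax--Sen--Tate core lemma} and then apply it twice. The core lemma asserts: if $E$ is a complete valued field with archimedean value group, $E^{alg}$ is its algebraic closure with the extended valuation, $D:=\widehat{E^{alg}}$, and $J$ is a closed subgroup of the Galois group $G_E$ of $E^{alg}$ over $E$ --- identified with the group of continuous $E$-automorphisms of $D$, since every $\sigma\in G_E$ is isometric and extends uniquely and continuously to $D$ --- then $D^{J}$ equals the closure of $(E^{alg})^{J}$ in $D$. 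I would then apply this first with base field $L$ and then with base field $K$.

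I would prove the core lemma directly from Fact \ref{fact:Ax_zeros_approximation}. Put $F_0:=(E^{alg})^{J}$; then $F_0$ is perfect (a $p$-th root in $E^{alg}$ of an element of $F_0$ is again $J$-fixed), so $E^{alg}/F_0$ is Galois and $J=G_{F_0}$. The inclusion $\overline{F_0}\subseteq D^{J}$ is clear, $J$ acting by isometries fixing $F_0$ and $D^{J}$ being closed. For the converse, take $x\in D^{J}$ and $\lambda\in\Gamma_{>0}$, and pick $a\in E^{alg}$ with $\nu(x-a)\ge\lambda$ by density of $E^{alg}$ in $D$. Every $\sigma\in J$ fixes $x$, so $\nu(\sigma(a)-a)\ge\min\{\nu(\sigma(a-x)),\nu(x-a)\}=\nu(x-a)\ge\lambda$, whence $\Delta_{F_0}(a)\ge\lambda$. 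Fact \ref{fact:Ax_zeros_approximation} (part (1) in mixed characteristic, part (2) in equal characteristic) then produces $b$ with $\nu(a-b)\ge\lambda-c$, where $c=(p/(p-1)^2)\nu(p)$ in the first case and $c$ is arbitrarily small in the second, and where $b\in F_0$: in the equal-characteristic case the approximant lies a priori only in the perfect closure of $F_0$, but $F_0$ is perfect. Hence $\nu(x-b)\ge\lambda-c$, and letting $\lambda\to\infty$ gives $x\in\overline{F_0}$.

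First apply the core lemma with $E=L$ and $E^{alg}=\overline L$, the algebraic closure of $L$ inside $C$ (which contains $K^{alg}$, hence is dense in $C$, giving $\widehat{\overline L}=C$ and $\Gamma_{\overline L}\subseteq\Gamma_{K^{alg}}$ archimedean), and with $J=G_L=\mathrm{Aut}_{\mathrm{cont}}(C/L)=:H$. Since $L$ is perfect, $(\overline L)^{G_L}=L$, so the core lemma (applicable because $L$ is complete) gives $C^{H}=\widehat L=L$. Consequently $(K^{alg})^{H}=K^{alg}\cap C^{H}=K^{alg}\cap L=:M$, and $M$ is again perfect, so $K^{alg}/M$ is Galois; since $L\cap K^{alg}=M$, the fields $L$ and $K^{alg}$ are linearly disjoint over $M$, and as $L\cdot K^{alg}$ is algebraic over the complete (hence henselian) field $L$, every element of $G_M=\mathrm{Gal}(K^{alg}/M)$ extends to a continuous isometric $L$-automorphism of $C$; that is, $H=G_M$. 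Now apply the core lemma with $E=K$ and $J=H=G_M$ (a closed subgroup of $G_K$ with $(K^{alg})^{H}=M$, and using that $C=\widehat{K^{alg}}$ with $\Gamma_{K^{alg}}$ archimedean, as given): it yields $C^{H}=\widehat M$. Combining the two applications, $L=C^{H}=\widehat M=\widehat{L\cap K^{alg}}$.

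The delicate point is the order of the two applications. In the second one, to control the diameters $\Delta_{M}(a)$ one needs that every automorphism of $K^{alg}$ fixing $M=L\cap K^{alg}$ extends to an automorphism of $C$ fixing all of $L$; this is false for a general complete subfield and becomes available only after $L$ has been identified with $C^{H}$, which is exactly the output of the first application. That first application carries no circularity precisely because there $J$ is the \emph{full} Galois group of $\overline L/L$, whose fixed field is $L$ by plain Galois theory once one invokes that $L$ is perfect --- and this is the only place the perfectness hypothesis is genuinely used. The remaining ingredients (perfectness of $F_0$ and of $M$, the archimedean-ness of the value groups in sight, and the linear-disjointness identification $H=G_M$) are routine, so I expect the main work to be simply keeping this logical dependency straight and checking the hypotheses of Fact \ref{fact:Ax_zeros_approximation} in each invocation.
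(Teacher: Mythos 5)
Your argument is correct, and it is exactly the standard proof of \cite[Proposition 3.8]{FO}: the paper's own ``proof'' consists solely of that citation together with Fact \ref{fact:Ax_zeros_approximation}, and your two-step application of the fixed-field lemma ($D^{J}=\overline{(E^{alg})^{J}}$, first over $L$ and then over $K$) is precisely what that citation encapsulates. The only step worth spelling out carefully in a write-up is the surjectivity of the restriction $H\to\mathrm{Gal}(K^{alg}/M)$, which you correctly reduce to $K^{alg}/M$ being Galois (since $M$ is perfect) together with henselianity of the complete rank-one field $L$.
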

\begin{proof}
See \cite[Proposition 3.8]{FO} with Fact \ref{fact:Ax_zeros_approximation}.
\end{proof}
\begin{corollary}\label{cor:general_tamely_ramifieid}
Let $p$ be a prime number. Let $K$ be a tamely ramified valued field of mixed characteristic $(0,p)$ with a perfect residue field $k$, and $F$ be the fraction field of $\W(k)$. Suppose $K$ is a subfield of the completion of $F^{alg}$, and either
\begin{itemize}
	\item $K$ is an algebraic extension of $F$, or
	\item complete.
\end{itemize}
For a complete valued field $L$ of mixed characteristic $(0,p)$ with a perfect residue field, any homomorphism over $p$ from $\H_\gamma(K)$ to $\H_\lambda(L)$ is induced from a unique homomorphism $K$ to $L$ for any $\gamma\in (\Gamma_{K})_{\ge 0}$ and $\lambda\in (\Gamma_{L})_{\ge 0}$.
\end{corollary}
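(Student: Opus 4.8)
The plan is to reduce everything to the finite tame case already settled in Theorem~\ref{thm:tame_vf_vhf}. To begin with, any ring homomorphism $\tilde f\colon K\to L$ has $\tilde f(p)=p$ (since $p\in\BZ\subseteq K$), so the induced hyperfield map is automatically over $p$; hence it suffices to exhibit an inducing homomorphism and to check it is unique.

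The first real step is to replace $K$ by an algebraic subfield over $F$. Set $K_0:=K\cap F^{alg}$. If $K$ is algebraic over $F$, then $K_0=K$. If $K$ is complete, then, $K$ being perfect (characteristic $0$), complete, contained in the completion $C$ of $F^{alg}$, and containing $F$, the Ax--Sen--Tate theorem (Fact~\ref{fact:structure_complete_subfields}) shows that $K$ is the completion of $K_0$. In both cases $K_0/F$ is a tame algebraic extension (this is what "$K$ tamely ramified" means in the algebraic case, and follows from it in the complete case) and $K_0$ is dense in $K$. Density forces the canonical map $\H_\gamma(K_0)\to\H_\gamma(K)$ to be an isomorphism: given $x\in K$, pick $x_0\in K_0$ with $\nu(x-x_0)>\gamma+\nu(x)$, so $[x]_\gamma=[x_0]_\gamma$, whence surjectivity, while injectivity is immediate from $1+\fm_{K_0}^\gamma=K_0\cap(1+\fm_K^\gamma)$. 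So $f$ is, equivalently, a homomorphism over $p$ from $\H_\gamma(K_0)$ to $\H_\lambda(L)$. Once the algebraic case furnishes the unique inducing $\tilde f_0\colon K_0\to L$, it is isometric, hence continuous (using $\tilde f_0(p)=p$ and the normalized valuations, $\nu_L(\tilde f_0(x))\to\infty$ as $\nu_K(x)\to\infty$); since $L$ is complete, $\tilde f_0$ extends uniquely to a continuous homomorphism $\tilde f\colon K\to L$, and $\tilde f$ induces $f$ because $\H_\gamma(K_0)=\H_\gamma(K)$.

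It then remains to treat $K_0$, a tame algebraic extension of $F$. Here I would write $K_0=\bigcup_i K_i$ as the directed union of its finite subextensions over $F$; each $K_i$ is a complete discrete valued field of mixed characteristic $(0,p)$, tamely ramified over $F$, with perfect residue field $k_i$ (finite separable over $k$, hence perfect). For $\gamma\in(\Gamma_{K_0})_{\ge 0}=\bigcup_i(\Gamma_{K_i})_{\ge 0}$, choose $i_0$ with $\gamma\in\Gamma_{K_{i_0}}$. For $i\ge i_0$, the inclusion $\H_\gamma(K_i)\hookrightarrow\H_\gamma(K_0)$ is a morphism of valued hyperfields, so $f_i:=f\restriction_{\H_\gamma(K_i)}$ is a homomorphism over $p$; and, $\Gamma_{K_i}$ being discrete with $\nu(p)=1$, we have $\gamma=\nu(\pi_i^{\,n_i-1})$ for a uniformizer $\pi_i$ of $K_i$, so $\H_\gamma(K_i)$ is the $n_i$-th valued hyperfield of $K_i$. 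Theorem~\ref{thm:tame_vf_vhf}---which, as one checks, holds verbatim when the target is an arbitrary complete (not necessarily discrete) valued field of mixed characteristic $(0,p)$ with perfect residue field, its proof and that of Lemma~\ref{lem:preserving_Teichmuller} using only completeness of the target and the presence of $\W(k_L)$ in $L$---then gives a unique homomorphism $\tilde f_i\colon K_i\to L$ inducing $f_i$. By uniqueness, $\tilde f_j\restriction_{K_i}=\tilde f_i$ whenever $j\ge i\ge i_0$, so the $\tilde f_i$ glue to one homomorphism $\tilde f_0\colon K_0\to L$; it induces $f$ since each class of $\H_\gamma(K_0)$ has a representative in some $K_i$. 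Uniqueness of $\tilde f_0$ comes from uniqueness at each finite level together with $K_0=\bigcup_{i\ge i_0}K_i$, and uniqueness of the extension to $K$ from continuity and density.

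I expect the main obstacle to be precisely the robustness claim used above: verifying that the proof of Theorem~\ref{thm:tame_vf_vhf}, including the Teichm\"uller/Witt-lifting step (Lemma~\ref{lem:preserving_Teichmuller}) and the Hensel step that pins down the image of a uniformizer, goes through when the complete discrete target is replaced by an arbitrary complete target with perfect residue field. Granting that, the rest is bookkeeping with directed unions and with the identification $\H_\gamma(K_0)=\H_\gamma(K)$ supplied by Ax--Sen--Tate.
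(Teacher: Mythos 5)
Your proof follows essentially the same route as the paper's: in the complete case you invoke Ax--Sen--Tate to identify $K$ with the completion of $K_0=K\cap F^{alg}$ and reduce to the algebraic case, and in the algebraic case you write $K_0$ as a directed union of finite tame totally ramified extensions $K_i$, apply Theorem~\ref{thm:tame_vf_vhf} to each restriction, and glue the resulting lifts using uniqueness; the final extension to $K$ is by density and completeness of $L$. The paper does the two cases in the opposite order but with the same ingredients.

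Two remarks where you are actually more careful than the paper. First, you note that to apply Theorem~\ref{thm:tame_vf_vhf} to $f\restriction_{\H_\gamma(K_i)}$ one should have $\gamma\in\Gamma_{K_i}$ (so that $\H_\gamma(K_i)$ really is an $n_i$-th valued hyperfield of the discrete field $K_i$) and so one should only take $i\ge i_0$; the paper's proof applies the theorem to all $K_i$ without this caveat, but since $K_0=\bigcup_{i\ge i_0}K_i$ this is harmless. Second, your worry that $L$ in the corollary is only assumed \emph{complete}, not \emph{complete discrete}, while Theorem~\ref{thm:tame_vf_vhf} and Lemma~\ref{lem:preserving_Teichmuller} are stated for complete discrete targets, is legitimate as a matter of exposition: the paper's own proof also just cites Theorem~\ref{thm:tame_vf_vhf} without comment, so it either implicitly assumes $L$ discrete or relies on exactly the robustness you describe. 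Your observation that the relevant arguments use only completeness of $L$, the Teichm\"uller section from Fact~\ref{fact:witt_teichmuller}(3), and the $W(k_L)$-structure is the right thing to check; making that verification explicit would be a genuine improvement over the paper's terse treatment, but it is not a gap in your proposal.
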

\begin{proof}
Let $L$ be a complete valued field of mixed characteristic $(0,p)$ with a perfect residue field. Let $F$ be the fraction field of $\W(k)$. Fix $\gamma\in (\Gamma_{K})_{\ge 0}$ and $\lambda\in (\Gamma_{L})_{\ge 0}$.\\

Suppose $K$ is an algebraic extension of $F$. Since $K$ is tamely ramified, $K=\bigcup K_i$ where $K_i$ is a tamely totally ramified finite extension of $F$. Take $f\in \Hom_{\rg}(\H_{\gamma}(K),\H_{\lambda}(L))$. Set $f_i:=f\restriction_{\H_{\gamma}(K_i)}$ and we have $f:=\lim_{\longleftarrow K_i}\limits f_i=\bigcup f_i$. By Theorem \ref{thm:tame_vf_vhf}, $f_i$ is induced from a unique homomorphism $\sigma_i:K_i\rightarrow L$. Take $\sigma:=\lim_{\longleftarrow K_i}\limits \sigma_i=\bigcup \sigma_i:K\rightarrow L$ and it is a unique homomorphism inducing $f$.\\

Suppose $K$ is complete. By Fact \ref{fact:structure_complete_subfields}, $K$ is the completion of $K':=K\cap F^{alg}$. Let $f'=f\restriction_{\H_{\gamma}(K')}$. By above result, $f'$ is induced from a unique homomorphism $\tilde f':K'\rightarrow L$. Since $K$ is a completion of $K'$, $\tilde f'$ induces a unique homomorphism $\tilde{f}\rightarrow L$, which induces $f$.
\end{proof}

\noindent We can not drop the condition of being over $p$ in Theorem \ref{thm:tame_vf_vhf}.
\begin{example}\label{ex:criteria_via_vhf_2}
Consider $K_1=\BQ_3(\sqrt{3})$ and $K_2=\BQ_3(\sqrt{-3})$, which are not isomorphic by Kummer Theory. Note that their residue fields are isomorphic to $\BF_3$ and so their Teichm\"{u}ller representatives are $\{-1,0,1\}$. Let $\pi_1=\sqrt{3}$ and $\pi_2=\sqrt{-3}$. Every elements of $K_1$ is of the form $\sum_{i\ge n}a_i \pi_1^n$ for some integer $n$ and $a_i\in\{-1,0,1\}$ with $a_n\neq 0$. So each elements in $\H_1(K_1)$ is of the form $\pi_1^n a_n(1+\fm_1)$ for some integer $n$ and $a_n\in\{-1,1\}$($\dagger$). The same formulas hold for $K_2$. We will show that $\Hom(\H_1(K_1),\H_1(K_2))=\{f_0,f_1\}$, where $f_i$ sends $[a]$ to $[a]$ for $a\in\{-1,1\}$, $[\pi_1]$ to   $[(-1)^i\pi_2]$, and $[3]$ to $[-3]$ for $i=1,2$, so that $\Hom_{\rg}(\H_1(K_1),\H_1(K_2))=\emptyset$. Note that $[1]\neq [-1]$. Let $f\in \Hom(\H_1(K_1),\H_1(K_2))$. Then $f([-1])=[-1]$ because $f([1])=[1]$ and $0\in f([1]+[-1])$. Since $\pi_1$ is an uniformizer, $f([\pi_1])=[\pi_2]$ or $=[-\pi_2]$. In both cases, $f([3])=f([\pi_1]^2)=[\pi_2]^2=[\pi_2^2]=[-3]$. By ($\dagger$), such $f$ induces an isomorphism between $\H_1(K_1)$ and $\H_1(K_2)$.
\end{example}

\noindent Without base fields, even the residue fields of $K_1$ and $K_2$ are primes fields so that the residue fields are equal, we can not lift a group homomorphism from $\H_1^{\times}(K_1)$ to $\H_1^{\times}(K_2)$ to a homomorphism from $K_1$ to $K_2$(c.f. Fact \ref{fact:embedding_tame}).
\begin{example}\label{ex:nobase_nogroup}
Let $K_1=K_2=\BQ_5$. The set of Teichm\"{u}ller representatives of $\BQ_5$ is $\{0,i,i^2,i^3,i^4\}$ where $i=\sqrt{-1}$. Consider a group isomorphism $f:\H_1^{\times}(\BQ_5)\rightarrow \H_1^{\times}(\BQ_5)$ by mapping $[i]\mapsto [-i]$ and $[5]\mapsto[5]$. Then $f$ is never induced from an automorphism of $\BQ_5$ since any automorphism of $\BQ_5$ sends $i$ to $i$.
\end{example}

\subsection{Generally ramified case}\label{subsection:generally_ramified}
We first introduce of a notion of lifting map of homomorphisms of the $n$-th valued hyperfields, which is an analogy to a lifting map of homomorphisms of the $n$-th residue rings in \cite[Definition 3.1]{LL}.

\begin{definition}\label{def:lifting homomorphim}
Let $K_1$ and $K_2$ be complete discrete valuation rings of characteristic $0$ with perfect residue fields $k_1$ and $k_2$ of characteristic $p$ respectively. Let $\pi_i$ be a uniformizer of $K_i$ and $\nu_i$ be a corresponding valuation of $K_i$ for $i=1,2$. For any homomorphism $\phi:\H_n(K_1)\rightarrow \H_m(K_2)$, we say that  a homomorphism $g:K_1\rightarrow K_2$  is a \emph{$(n,m)$-lifting} of $\phi$ at $\pi_1$ if $g$ satisfies the following:
 \begin{itemize}
\item
There exists a representaive $b$ of $\phi([\pi_1])$ which satisfies
$$
  \nu_2\big(g(\pi_1)-b\big)> M(K_1).
$$
\item
$\phi_{red,1} \circ \H_1=\H_1 \circ g$ where $\phi_{red,1}:\H_1(K_1) \rightarrow \H_1(K_2) $ denotes the natural reduction map of $\phi$.
\end{itemize}
When such $g$ is unique, we denote $g$ by $\L_{\pi_1, n,m}^{\H}(\phi)$. When $\L_{\pi_1, n,m}^{\H}(\phi)$ exists for all $\phi:\H_n(K_1)\rightarrow \H_m(K_2)$, we write $\L_{\pi_1, n,m}^{\H}:\Hom_{\BZ}(\H_n(K_1),\H_m(K_2))\rightarrow \Hom(K_1,K_2)$. When $n=m$, we briefly write $\L_{\pi_1, n,m}^{\H}=\L_{\pi_1, n}^{\H}$ and say that $\L_{\pi_1, n}^{\H}$ is an \emph{$n$-lifting  at $\pi_1$}.
\end{definition}
\noindent The following result is analogous to Proposition 2.9(2) for $n$-th residue rings in \cite{LL}.
\begin{proposition}\label{prop:lifting_independent_uniformizer}
Let $K_1$ and $K_2$ be complete discrete valued fields of characteristic $0$ with perfect residue fields $k_1$ and $k_2$ of characteristic $p$ respectively.  Let $\pi_i$ be a uniformizer of $K_i$ and $\nu_i$ be a corresponding valuation of $K_i$ for $i=1,2$. Let $R_i$ be the valuation ring of $K_i$ for $i=1,2$. The definition of liftings is independent of the choice of uniformizer of $K_1$. More precisely, saying that $g:K_1\rightarrow K_2$ is a $(n,m)$-lifting of $\phi:\H_n(K_1)\rightarrow \H_m(K_2)$ at $\pi_1$ is equivalent to the following:
\begin{enumerate}
\item
For any $x$ in $R_1$, there exists a representative $b_x$ of $\phi(x(1+\fm_1^n))$ which satisfies
$$
  \nu_2\big(g(x)-b_x\big)
  > M(K_1).
  $$
\item
$\phi_{red,1} \circ \H_1=\H_1 \circ g$
\end{enumerate}
We write $\L_{\pi_1, n, m}^{\H}=\L_{n, m}^{\H}$ and say that $\L_{n, m}^{\H}$ is a \emph{$(n,m)$-lifting}. Moreover, there is at most one $(n,m)$-lifting for $m>M(K_1)e_2$, where $e_2$ is the ramification index of $K_2$.
\end{proposition}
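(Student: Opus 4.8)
The statement to prove is Proposition \ref{prop:lifting_independent_uniformizer}: first, that the notion of $(n,m)$-lifting at $\pi_1$ is equivalent to the ``coordinate-free'' conditions (1) and (2); second, that when $m > M(K_1)e_2$ there is at most one $(n,m)$-lifting. I would prove the equivalence first and then use it, together with Krasner's lemma, to get uniqueness.

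\textbf{Step 1: the equivalence.} The implication $(1)\&(2)\Rightarrow$ ``$g$ is a lifting at $\pi_1$'' is immediate: take $x=\pi_1$ in (1), and the representative $b_{\pi_1}$ of $\phi([\pi_1])$ it furnishes satisfies $\nu_2(g(\pi_1)-b_{\pi_1})>M(K_1)$, which is exactly the first bullet of Definition \ref{def:lifting homomorphim}, while (2) is the second bullet verbatim. For the converse, suppose $g$ is a lifting at $\pi_1$, so there is a representative $b$ of $\phi([\pi_1])$ with $\nu_2(g(\pi_1)-b)>M(K_1)$; I must produce, for an arbitrary $x\in R_1$, a representative $b_x$ of $\phi([x]_n)$ with $\nu_2(g(x)-b_x)>M(K_1)$. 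Write $x=\sum_{k\ge 0} c_k \pi_1^k$ with $c_k\in S_1$ Teichm\"uller (if $x=0$ this is trivial; if $\nu_1(x)>0$ factor out the appropriate power of $\pi_1$). Since $g$ is a homomorphism of valued fields and, by Lemma \ref{lem:preserving_Teichmuller}(1), $\phi$ sends Teichm\"uller classes to Teichm\"uller classes with $f\restriction \H_1(S_1)$ injective, the reduction condition (2) forces $\phi([c_k]_n)=[g(c_k)]_m$ for each $k$: indeed $[g(c_k)]_m$ is the unique Teichm\"uller representative whose image in $\H_1(S_2)$ equals $\phi_{red,1}([c_k]_1)=\H_1(g(c_k))$. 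Hence $g(x)=\sum_k g(c_k) g(\pi_1)^k$, and I can compare this term by term to $\sum_k g(c_k) b^k$, a representative of $\hsum [g(c_k)]_m \phi([\pi_1])^k \ni \phi([x]_n)$ by Lemma \ref{lem:basic_on_hyperfield}(4)--(5). The binomial estimate $\nu_2(g(\pi_1)^k - b^k)\ge \nu_2(g(\pi_1)-b)>M(K_1)$ (using that $\nu_2(g(\pi_1))=\nu_2(b)=1/e_1\cdot(\text{something})>0$ and $M(K_1)\ge 1$, so higher powers only help) gives $\nu_2(g(x)-\sum_k g(c_k)b^k) > M(K_1)$, and this sum is the desired $b_x$. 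A small amount of care is needed to check that $\sum_k g(c_k) b^k$ genuinely lies in the hyperfield sum $\phi([x]_n)$ — this follows from Lemma \ref{lem:basic_on_hyperfield}(5) because all the $g(c_k)b^k$ terms and their partial sums stay in $R_2$ up to the relevant precision.

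\textbf{Step 2: uniqueness.} Suppose $g,g':K_1\to K_2$ are both $(n,m)$-liftings of the same $\phi$, with $m>M(K_1)e_2$. By Fact \ref{fact:witt_teichmuller}(2), $K_1 = \W(k_1)[\pi_1]$ with $\pi_1$ a uniformizer, so $g$ and $g'$ are determined by their (common, by Lemma \ref{lem:preserving_Teichmuller}(2) and condition (2)) restriction to $\W(k_1)$ together with the images $g(\pi_1), g'(\pi_1)$. From the first bullet of Definition \ref{def:lifting homomorphim} there are representatives $b,b'$ of $\phi([\pi_1])$ with $\nu_2(g(\pi_1)-b)>M(K_1)$ and $\nu_2(g'(\pi_1)-b')>M(K_1)$; since $b,b'$ represent the same class, $b' = b(1+\delta)$ with $\delta\in\fm_2^m$, so $\nu_2(b-b')\ge m\cdot\theta_{K_2} + \nu_2(b)$. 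With the normalized valuation $\nu_2(p)=1$ we have $\theta_{K_2}=1/e_2$ and $\nu_2(b) = \nu_2(g(\pi_1)) = 1/e_2 > 0$, hence $\nu_2(b-b') \ge (m+1)/e_2 > M(K_1)+1/e_2 > M(K_1)$. Therefore $\nu_2(g(\pi_1)-g'(\pi_1)) > M(K_1)$. Now $g(\pi_1)$ and $g'(\pi_1)$ are both roots in $K_2^{alg}$ of the (Eisenstein) minimal polynomial of $\pi_1$ over the image field $g(\W(k_1))$'s fraction field, whose conjugates are pairwise at distance $\le M(K_1)$ by the very definition of $M(K_1)=M(R_1)$ in Remark/Definition \ref{rem/def:krasnernumber}; since $g(\pi_1)-g'(\pi_1)$ is strictly closer than any two distinct conjugates, Krasner's lemma (applied over the common subfield $K=\mathrm{Frac}(g(\W(k_1)))$) forces $g(\pi_1)=g'(\pi_1)$, hence $g=g'$.

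\textbf{Main obstacle.} The routine-but-delicate part is Step 1's converse: carefully bookkeeping that the $\pi_1$-adic expansion of $x$ is transported by $\phi$ coefficient-by-coefficient (which is where condition (2), Lemma \ref{lem:preserving_Teichmuller}, and the injectivity of $f\restriction\H_1(S_1)$ all get used) and that the term-by-term binomial comparison keeps everything inside $R_2$ so that Lemma \ref{lem:basic_on_hyperfield}(5) applies and $\sum_k g(c_k)b^k$ is a legitimate representative of $\phi([x]_n)$. The conceptual heart, though, is Step 2, and there the only real content is the valuation inequality $\nu_2(b-b')>M(K_1)$ coming from $m>M(K_1)e_2$ plus $\nu_2(b)>0$ — once that is in hand, Krasner's lemma via $M(K_1)$ finishes it exactly as in the $n$-th residue ring case of \cite{LL}.
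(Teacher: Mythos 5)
Your proposal is correct and follows essentially the same approach as the paper: for the equivalence, expand $x$ $\pi_1$-adically with Teichm\"uller coefficients, transport term-by-term using condition (2) and Lemma \ref{lem:preserving_Teichmuller}, and close with the ultrametric/binomial estimate; for uniqueness, force agreement on $\W(k_1)$ and then use $m>M(K_1)e_2$ together with the conjugate-distance bound $M(K_1)$ on $g(\pi_1)$. Two small points of presentation: the paper makes the hyperfield-sum step rigorous by explicitly truncating the $\pi_1$-adic series at a finite $l$ with $\nu_1(\sum_{i>l}\lambda_i\pi_1^i)>M(K_1)$ before invoking the (finitary) Lemma \ref{lem:basic_on_hyperfield}, whereas you only gesture at this; and the final step really rests directly on the definition of $M(K_1)$ together with the fact that $g(\pi_1)$ and $g'(\pi_1)$ are conjugates over $\mathrm{Frac}(g(\W(k_1)))$ --- invoking Krasner's lemma (which concludes $K(a)\subset K(b)$, not $a=b$) is a slight mislabeling of what is actually the maximality property of $M(K_1)$, as in the paper's $(\ddagger)$.
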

\begin{proof}
The proof is similar to the proof of \cite[Proposition 3.5 (2)]{LL}. Let $S_i$ be the Teichm\"{u}ller representatives of $K_i$ for $i=1,2$. Fix a uniformizer $\pi$ of $K_1$. Let $g:K_1\rightarrow K_2$ be a $(n,m)$-lifting of $\phi:\H_n(K_1)\rightarrow \H_m(K_2)$ at a uniformizer $\pi$. Let $b \in K_2$  be a representative of $\phi([\pi]))$ such that $\nu_2(g(x)-b)>M(K_1)$. Note that $\nu_2(b)>0$. Take $x\in R_1$. Then $x=\sum_{i\ge 0}\lambda_i \pi^i$. Take $l>0$ such that $\nu_1(\sum_{i>l}\lambda_i\pi^i)>M(K_1)$. Denote $x^{\le l}:=\sum_{i\le l}\lambda_i \pi^i$ and $x^{>l}\sum_{i>l}\lambda_i \pi^i$ so that $x=x^{\le l}+x^{>l}$. Then we have that
\begin{align*}
\phi([x])&=[x^{\le l}+x^{>l}])\\
&\in \big ( \phi([x^{\le l}]+_{\H}\phi([x^{>l}]) \big )\\
&\subset \big( \sum_{i\le l}^{\H} \phi([\lambda_i])\phi([\pi]^i)+\phi([x^{>l}])  \big)\\
&\subset \big( \sum_{i\le l}^{\H} [g(\lambda_i)][b]^i+\phi([x^{>l}])  \big).
\end{align*}
There is a representative $b_x$ of $\phi([x])$ of the form:$$b_x=\sum_{i\le l}g(\lambda_i)b^i +d$$ for some $\nu_2(d)>M(K_1)$. Compute
\begin{align*}
\nu_2(g(x)-b_x)&=\nu_2\big( \sum_i (g(\lambda_i)g(\pi)^i)- b_x \big)\\
&=\nu_2\big(\sum_{0<i\le l}  g(\lambda_i)(g(\pi)^i-b^i)+(\sum_{i>l} g(\lambda_i)g(\pi)^i-d) \big)\\
&\ge \min\{\nu_2(\sum_{0<i\le l}  g(\lambda)(g(\pi)^i-b^i)),\nu_2(\sum_{i>l} g(\lambda_i)g(\pi)^i-d)\}\\
&>M(K_1)
\end{align*}
because $\nu_2(g(\pi)^i-\beta^i)=\nu_2(g(\pi)-\beta)+\nu_2(g(\pi)^{i-1}+\cdots + \beta^{i-1})>M(K_1)$, and $\nu_2(\sum_{i>l} g(\lambda_i)g(\pi)^i),\nu_2(d)>M(K_1)$.\\

Now we show moreover part. Assume $m>M(K_1)e_2$ and there are two $(n,m)$-liftings $\L,\L': \Hom_{\BZ}(\H_n(K_1),\H_m(K_2))\rightarrow \Hom(K_1,K_2)$. Fix $f\in \Hom_{\rg}(\H_n(K_1),\H_m(K_2))$. By Lemma \ref{lem:preserving_Teichmuller} and $(2)$, we have that $\L(f)\restriction \W(k_1)=\L'(f)\restriction \W(k_1)$ $(\dagger)$. It remains to show that $\L(f)(\pi_1)=\L'(f)(\pi_1)$. Set $\pi:=\L(f)(\pi_1)$ and $\pi':=\L'(f)(\pi_1)$. By $(\dagger)$, $\pi$ and $\pi'$ are conjugates over the fraction field of $\W(k_2)$ $(\ddagger)$. By $(1)$, there are two representatives $b$ and $b'$ of $f([\pi_1])$ such that $$\nu_2(\pi-\beta),\nu_2(\pi'-\beta')>M(K_1).$$ Since $\nu_2(b-b')>m/e_2>M(K_1)$, we have that
\begin{align*}
\nu_2(\pi-\pi')&>\max\{\nu_2(\pi-b),\nu_2(b-b'),\nu_2(\pi'-b')\}\\
&>M(K_1).
\end{align*}
By $(\ddagger)$, we conclude that $\nu_2(\pi-\pi')=\infty$ and $\pi=\pi'$.
\end{proof}

\begin{remark}
Let $K_1$ and $K_2$ be complete discrete valued field of mixed characteristic $(0,p)$ with perfect residue fields. Suppose $K_1$ is tamely ramified. By Theorem \ref{thm:tame_vf_vhf}, for every $n\ge 1$ there is a unique bijective $n$-lifting map  
$$\L_n^{\H}:\Hom_{\BZ}(\H_n(K_1),\H_n(K_2))\rightarrow \Hom(K_1,K_2)$$ such that $f([a]_n)=[\L_n^{\H}(f)(a)]$ for $f\in \Hom_{\BZ}(\H_n(K_1),\H_n(K_2))$ and $a\in K_1$.
\end{remark}

\begin{fact}[Krasner's lemma]\label{lem:Krasner}
Let $(K,\nu)$ be henseilan  valued field whose value group is contained in $\BR$ and let $a,b\in K^{alg}$. Suppose $a$ is separable over $K(b)$. Suppose that for all embeddings $\sigma(\neq id)$ of $K(a)$ over $K$, we have
$$\wi{\nu}(b-a)>\wi{\nu}\big(\sigma (a)-a\big).
$$
 Then $K(a)\subset K(b)$.
\end{fact}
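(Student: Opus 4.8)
The statement to prove is Krasner's lemma, and the plan is to run the classical argument, taking care to say where the henselian hypothesis enters. First I would reduce the claim: since $K(a)\subset K(b)$ is equivalent to $a\in K(b)$, and $a$ is assumed separable over $K(b)$, it suffices to show that every $K(b)$-embedding $\tau$ of $K(b,a)$ into $K^{alg}$ fixes $a$. Indeed, the number of such embeddings equals $[K(b,a):K(b)]$ by separability, so if they all satisfy $\tau(a)=a$ then the minimal polynomial of $a$ over $K(b)$ has a single root, hence degree one, giving $a\in K(b)$.

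Next I would record the valuation-theoretic input. Because $K$ is henselian with value group in $\BR$, the extension $\wi\nu$ of $\nu$ to $K^{alg}$ is the unique valuation lying over $\nu$; consequently every $K$-automorphism of $K^{alg}$ (in particular any extension of $\tau$ to $K^{alg}$, which exists since $\tau$ fixes $K$) is isometric with respect to $\wi\nu$. So for such a $\tau$, using $\tau(b)=b$,
$$
\wi\nu\big(b-\tau(a)\big)=\wi\nu\big(\tau(b)-\tau(a)\big)=\wi\nu\big(\tau(b-a)\big)=\wi\nu(b-a).
$$
Combining this with the hypothesis $\wi\nu(b-a)>\wi\nu(\sigma(a)-a)$ for all non-identity embeddings $\sigma$ of $K(a)$ over $K$ is the heart of the matter.

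Now the contradiction step: by the ultrametric inequality,
$$
\wi\nu\big(\tau(a)-a\big)\ \ge\ \min\{\wi\nu(\tau(a)-b),\ \wi\nu(b-a)\}\ =\ \wi\nu(b-a),
$$
since both arguments of the minimum equal $\wi\nu(b-a)$. Suppose $\tau$ does not fix $a$. Then $\sigma:=\tau\restriction_{K(a)}$ is an embedding of $K(a)$ over $K$ with $\sigma\neq\id$, so the hypothesis gives $\wi\nu(b-a)>\wi\nu(\sigma(a)-a)=\wi\nu(\tau(a)-a)\ge \wi\nu(b-a)$, a contradiction. Hence $\tau(a)=a$ for every $K(b)$-embedding $\tau$ of $K(b,a)$, and by the reduction in the first paragraph we conclude $a\in K(b)$, i.e.\ $K(a)\subset K(b)$.

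The only genuinely non-formal point is the assertion that every $K(b)$-embedding of $K(b,a)$ into $K^{alg}$ preserves $\wi\nu$, which is exactly where henselianity (uniqueness of the prolongation of $\nu$ to $K^{alg}$, hence $\wi\nu\circ\tau=\wi\nu$) is used; the rest is bookkeeping with the ultrametric inequality and separability. I therefore expect that step — or rather its careful statement — to be the main thing to get right, while the computation itself is routine.
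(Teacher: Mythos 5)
The paper records Krasner's lemma as a \emph{Fact} and supplies no proof of its own, so there is nothing internal to compare against; it is a standard result being quoted. Your argument is the classical proof and is correct: the reduction via separability to showing that every $K(b)$-embedding $\tau$ of $K(b,a)$ fixes $a$; the observation that henselianity gives a unique prolongation $\wi\nu$ of $\nu$ to $K^{alg}$ and hence that any extension of $\tau$ to a $K$-automorphism of $K^{alg}$ is an isometry for $\wi\nu$; the computation $\wi\nu(b-\tau(a))=\wi\nu(b-a)$ followed by the ultrametric bound $\wi\nu(\tau(a)-a)\ge \wi\nu(b-a)$; and the contradiction with the hypothesis when $\tau\restriction_{K(a)}\ne\id$. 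You also correctly identify henselianity (uniqueness of the valuation extension) as the only non-formal ingredient — without it $\wi\nu\circ\tau$ need not equal $\wi\nu$ and the key equality fails.
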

\noindent We show that a lifting map of homomorphisms of $n$-th valued hyperfields for large enough $n$ 
\begin{theorem}\label{thm:mainhomlifting}
Let $K_1$ and $K_2$ be complete discrete valued field of mixed characteristic $(0,p)$ with perfect residue fields. Let $e_1$ and $e_2$ be ramification indices of $K_1$ and $K_2$ respectively. Let $\H_n(K_1)$ and $\H_m(K_2)$ be valued hyper fields of $K_1$ and $K_2$ respectively. Suppose $m>M(K_1)e_1e_2$. There is a unique lifting map $\L_{n,m}^{\H}:\Hom_{\rg}(\H_n(K_1),\H_m(K_2))\rightarrow \Hom(K_1,K_2)$.
\end{theorem}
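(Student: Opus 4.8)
The plan is to separate uniqueness from existence. Uniqueness is already available: since $m>M(K_1)e_1e_2\ge M(K_1)e_2$, the last part of Proposition~\ref{prop:lifting_independent_uniformizer} tells us that each $f\in\Hom_{\rg}(\H_n(K_1),\H_m(K_2))$ has at most one $(n,m)$-lifting. So it suffices to construct, for every such $f$, a homomorphism $g\colon K_1\to K_2$ that is an $(n,m)$-lifting, and then put $\L^{\H}_{n,m}(f):=g$. The construction rests on the presentation $R(K_1)=\W(k_1)[\pi_1]$ from Fact~\ref{fact:witt_teichmuller}(2), where $\pi_1$ is a root of an Eisenstein polynomial $E(X)=X^{e_1}+\sum_{i<e_1}a_iX^i\in\W(k_1)[X]$. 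By Lemma~\ref{lem:preserving_Teichmuller}(2), $f$ restricts on the Witt part to a ring homomorphism $\bar f\colon\W(k_1)\to\W(k_2)$ with $f([a]_n)=[\bar f(a)]_m$; applying $\bar f$ to the coefficients gives $E^{\bar f}(X)=X^{e_1}+\sum_{i<e_1}\bar f(a_i)X^i$, still Eisenstein over the fraction field $F_2$ of $\W(k_2)$ and hence irreducible there. The lift must be $\bar f$ on $\W(k_1)$ and must send $\pi_1$ to a root of $E^{\bar f}$, so the whole problem reduces to locating a root of $E^{\bar f}$ lying in $K_2$.

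To pick the right root I would first read off the valuation of a representative $b\in K_2$ of $f([\pi_1]_n)$: multiplicativity of $f$ and condition~(4) of Definition~\ref{def:morphism_vhf} yield an order-preserving group embedding $\Gamma_1\hookrightarrow\Gamma_2$ intertwining the valuations, and since $f$ is over $p$ it fixes $\nu(p)=1$, which forces $e_1\mid e_2$ (otherwise $\Hom_{\rg}(\H_n(K_1),\H_m(K_2))=\emptyset$ and there is nothing to prove) and gives $\nu_2(b)=1/e_1$. Next I would push the relation $0=\pi_1^{e_1}+\sum_{i<e_1}a_i\pi_1^i$ through $f$: using Definition~\ref{def:morphism_vhf}(3), the identity $f([a_i]_n)=[\bar f(a_i)]_m$, and Lemma~\ref{lem:basic_on_hyperfield}(5) (the relevant products have nonnegative valuation), one obtains $0=E^{\bar f}(b)+d$ with $d\in\fm_2^m$, that is, $\nu_2(E^{\bar f}(b))\ge m/e_2$. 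Factoring $E^{\bar f}=\prod_{j=1}^{e_1}(X-\pi^{(j)})$ over $F_2^{alg}$, the $e_1$ roots $\pi^{(j)}$ are conjugate of valuation $1/e_1$, and $\sum_j\nu_2(b-\pi^{(j)})\ge m/e_2$ forces some root $\pi_2$ with $\nu_2(b-\pi_2)\ge m/(e_1e_2)>M(K_1)$.

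The hard part is to promote this $\pi_2$ from $F_2^{alg}$ into $K_2$ itself, and this is exactly where $M(K_1)$ and the hypothesis $m>M(K_1)e_1e_2$ are used, via Krasner's lemma. Since $E^{\bar f}$ is Eisenstein, $\pi_2$ is a uniformizer of $L:=F_2(\pi_2)$, a complete discrete valued field of mixed characteristic with ramification index $e_1$; the $F_2$-embedding $K_1\to L$ sending $\pi_1\mapsto\pi_2$ and $a\mapsto\bar f(a)$ on $\W(k_1)$ is well defined, so $M(L)=M(K_1)$ by the last clause of Remark/Definition~\ref{rem/def:krasnernumber}. Because $\pi_2$ is a uniformizer of $L$, the definition of $M(L)$ (which is independent of the chosen uniformizer) gives $\nu_2(\sigma(\pi_2)-\pi_2)\le M(L)=M(K_1)$ for every $F_2$-embedding $\sigma\ne\mathrm{id}$ of $L$; combined with $\nu_2(b-\pi_2)>M(K_1)$ and separability in characteristic $0$, Krasner's lemma (Fact~\ref{lem:Krasner}) yields $F_2(\pi_2)\subseteq F_2(b)\subseteq K_2$, so $\pi_2\in K_2$. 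I expect the valuation-counting that produces a root within distance $m/(e_1e_2)$ of $b$, together with the identity $M(L)=M(K_1)$ feeding Krasner's lemma, to be the only genuinely delicate points; everything else is bookkeeping.

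Finally I would assemble $g\colon K_1\to K_2$ by $g|_{\W(k_1)}=\bar f$ and $g(\pi_1)=\pi_2$; this is a well-defined field embedding since $K_1\cong\W(k_1)[X]/(E)$ and $E^{\bar f}(\pi_2)=0$, and it is isometric because $\nu_2\circ g$ and $\nu_1$ are two valuations on $K_1$ extending the natural valuation on $F_1$ (they agree on $p$ and on $\pi_1$), hence equal by uniqueness of the extension to the complete field $K_1$. That $g$ is an $(n,m)$-lifting of $f$ then follows from Definition~\ref{def:lifting homomorphim}: $b$ is a representative of $f([\pi_1]_n)$ with $\nu_2(g(\pi_1)-b)>M(K_1)$, and $g$ induces the reduction of $f$ on residue fields because $\bar f$ does (Remark~\ref{rem:H1(S)_resideufield}, Lemma~\ref{lem:preserving_Teichmuller}). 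Together with the uniqueness noted at the outset, $f\mapsto g$ defines the required lifting map $\L^{\H}_{n,m}$.
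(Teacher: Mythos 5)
Your proposal is correct and follows essentially the same route as the paper: reduce to the unramified part via Lemma~\ref{lem:preserving_Teichmuller}(2) to get $\bar f$ on Witt rings, push the Eisenstein relation $q(\pi_1)=0$ through $f$ using Lemma~\ref{lem:basic_on_hyperfield} to get a near-root $b$ (the paper's $\pi_2'$) with $\nu_2(\bar f(q)(b))\ge m/e_2$, and then use Krasner's lemma to locate an actual root $\pi_2\in K_2$ close to $b$. The only difference is one of presentation: you explicitly carry out the valuation-counting step (factoring $\bar f(q)$ over $F_2^{alg}$ and using $\sum_j\nu_2(b-\pi^{(j)})\ge m/e_2$ to produce a root within distance $>M(K_1)$ of $b$) and the identification $M(L)=M(K_1)$ via Remark/Definition~\ref{rem/def:krasnernumber}, both of which the paper's proof compresses into a single invocation of Krasner's lemma; you also make the uniqueness claim explicit by citing Proposition~\ref{prop:lifting_independent_uniformizer} rather than leaving it implicit.
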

\begin{proof}
Fix a homomorphism $f$ in $\Hom_{\rg}(\H_n(K_1),\H_m(K_2))$. By Lemma \ref{lem:preserving_Teichmuller}, $f$ induces a map $f\restriction_{\H_n(S_1)}$ from $\H_n(S_1)$ to $\H_m(S_2)$. This map $f\restriction_{\H_n(S_1)}$ induces a homomorphism from $k_1$ to $k_2$ and by the functoriality of Witt ring, we have a homomorphism $\bar f:\W(k_1)\rightarrow \W(k_2)$. Note that $\bar f$ induces a homomorphism from $\W(k_1)[X]$ to $\W(k_2)[X]$ by acting on coefficients and we denote this homomorphism by $\bar f$ also. By Fact \ref{fact:witt_teichmuller}(2), there is a uniformizer $\pi_1$ of $K_1$ such that $R_1=\W(k_1)[\pi_1]$. Let $q(X)$ be the irreducible polynomial of $\pi_1$ over the fraction field of $\W(k_1)$, which is in $\W(k_1)[X]$. Write $q(X)=X^e+a_{e-1}X^{e-1}+\ldots+a_0$ where $e=\nu_1(p)$ is the ramification index of $K_1$. Let $\pi_2'\in R_2$ such that $[\pi_2']_m=f([\pi_1]_n)$. For $a\in \W(k_1)$, we can uniquely write $a=a_0+a_1p+a_2p^2+\ldots$ with $a_i\in S_1$. For $l\ge 0$, define $a^{\le l}:=a_0+a_1p+\ldots+a_lp^l$ and $a^{>l}:=a-a^{\le l}$. And define $q^{\le l}(X):=X^e+a_{e-1}^{\le l}X^{e-1}+\ldots+a_0^{\le l}$ and $q^{>l}(X)=q(X)-q^{\le l}(X)$, which are in $\W(k_1)[X]$. Then we have that
\begin{align*}
0&=f([q(\pi_1)]_n)\\
&=f([q^{\le m}(\pi_1)+q^{>m}(\pi_1)]_n)\\
&\in f([q^{\le m}(\pi_1)]_n)\hp f([q^{> m}(\pi_1)]_n)\\
&\subset [\pi_2'^e]_m\hp f[a_{e-1}^{\le m}\pi_2'^{e-1}]_m\hp\ldots\hp f[a_0^{\le m}]_m \hp f([q^{>m}(\pi_1)]_n)\\
&\subset [\pi_2'^e]_m\hp\hsum_{0\le i<e}\limits \hsum_{0\le j\le m}\limits [\pi_2'^i\bar f(a_i^j)p^j]_m\hp f([q^{>m}(\pi_1)]_n).
\end{align*}
Note that $\nu_2(q^{>m}(\pi_1))\ge m/e_2$. By Lemma \ref{lem:basic_on_hyperfield}(2), we have that $$0=\bar f(q^{\le m})(\pi_2')+d (*)$$ for some $d\in \fm_2^m$. Since $q=q^{\le m}+q^{>m}$, we have that $\bar f(q)(\pi_2')=\bar f(q^{\le m})(\pi_2')+\bar f(q^{> m})(\pi_2')$ and $\nu_2(\bar f(q^{> m})(\pi_2'))\ge m/e_2$. From $(*)$ we have that $$0=\bar f(q)(\pi_2')+d'$$ for some $d'\in \fm_2^m$, and $\nu_2(\bar f(q)(\pi_2'))\ge m/e_2>M(K_1)e_1$. By Krasner's lemma, there is $\pi_2\in K_2$ such that $\bar f(q)(\pi_2)=0$ with $\nu_2(\pi_2-\pi_2')>M(K_1)$. Define $\tilde f:K_1\rightarrow K_2,\pi_1\mapsto \pi_2$ extending $\bar f$, and set $\L_{n,m}^{\H}(f)=\tilde f$.
\end{proof}

\begin{corollary}\label{cor:criteria_isom}
Let $(K_1,\nu_1,k_1,\Gamma_1)$ and $(K_2,\nu_2,k_2,\Gamma_1)$ be finitely ramified complete valued fields of mixed characteristic. Let $n> e_{\nu_2}(p)(1+e_{\nu_1}^2(p))$. The following are equivalent:
		\begin{enumerate}
			\item $K_1\cong K_2$;
			\item $R_{n}(K_1)\cong R_{n}(K_2)$; and
			\item $\H_{n}(K_1)\cong_{\H_n(\{p\})} \H_{n}(K)$.

		\end{enumerate}
\end{corollary}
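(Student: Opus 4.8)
The plan is to establish a cycle of implications $(1)\Rightarrow(3)\Rightarrow(2)\Rightarrow(1)$, using the lifting machinery built up in this section together with Tolliver's functorial correspondence between valued hyperfields and triples. First I would prove $(1)\Rightarrow(3)$: if $g\colon K_1\to K_2$ is an isomorphism of valued fields, then by Remark \ref{rem:vf_rigid_hom} (after normalizing both valuations so that $\nu_i(p)=1$) it induces an isometric isomorphism $\H_n(K_1)\to\H_n(K_2)$, and since $g(p)=p$ this induced isomorphism sends $[p]_n$ to $[p]_n$, i.e. it is over $\H_n(\{p\})$; its inverse is induced by $g^{-1}$, so it is genuinely an isomorphism in the relevant category. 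Next, $(3)\Rightarrow(2)$: given an isomorphism $\phi\colon\H_n(K_1)\to\H_n(K_2)$ over $p$, apply the functor $\Tr$ from Fact \ref{fact:vhf_to_triple} (noting $\phi$ and $\phi^{-1}$ are isometric homomorphisms and $\Tr$ is functorial with $\Tr(\phi^{-1})=\Tr(\phi)^{-1}$). By Example \ref{ex:vhf_of_valuedfield}, $\Tr(\H_n(K_i))\cong(R(K_i)/\fm_{K_i}^n,\fm_{K_i}/\fm_{K_i}^{n+1},\epsilon_i)$, so $\phi$ yields an isomorphism of the underlying truncated DVRs $R_n(K_1)\cong R_n(K_2)$; one checks this ring isomorphism is over $\BZ/p^{?}$ (i.e. sends the image of $p$ to the image of $p$) because $\phi$ is over $p$.

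For $(2)\Rightarrow(1)$, I would invoke Fact \ref{fact:lift_n-th_residue}: since $n> e_{\nu_2}(p)(1+e_{\nu_1}^2(p))\ge M(K_1)e_{\nu_1}(p)e_{\nu_2}(p)$ by the inequality recorded after Remark/Definition \ref{rem/def:krasnernumber}, any homomorphism $R_n(K_1)\to R_n(K_2)$ lifts to a homomorphism $K_1\to K_2$; applying this to the isomorphism $R_n(K_1)\cong R_n(K_2)$ and to its inverse, and using uniqueness of the lifting (from Fact \ref{fact:lift_n-th_residue}, or Proposition \ref{prop:lifting_independent_uniformizer}), one gets mutually inverse homomorphisms $K_1\to K_2$ and $K_2\to K_1$, hence an isomorphism. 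Alternatively $(3)\Rightarrow(1)$ can be obtained directly from Theorem \ref{thm:mainhomlifting}: since $n>e_{\nu_2}(p)(1+e_{\nu_1}^2(p))\ge M(K_1)e_1e_2$, the lifting map $\L_{n,n}^{\H}$ applied to the isomorphism $\phi$ and to $\phi^{-1}$ produces mutually inverse field homomorphisms, using the uniqueness clause to see that the two composites are the identity (they lift $\phi^{-1}\circ\phi=\id$ and $\phi\circ\phi^{-1}=\id$, which are also lifted by the respective identity maps).

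The main obstacle I anticipate is the bookkeeping in $(3)\Rightarrow(2)$: making precise that an isometric isomorphism of valued hyperfields that is over $p$ induces, via $\Tr$, an isomorphism of the $n$-th residue rings that again respects $p$, so that Fact \ref{fact:lift_n-th_residue} can be fed with the right kind of morphism. This requires tracing the element $[p]_n\in\H_n(K_i)$ through the construction $R=\CO_H/\!\equiv_{\rho_H}$ of Fact \ref{fact:vhf_to_triple} and checking it maps to the class of $p$ in $R_n(K_i)$ under the identification of Example \ref{ex:vhf_of_valuedfield}; once this is set up, functoriality does the rest. The equivalence $(1)\Leftrightarrow(2)$ is essentially Fact \ref{fact:lift_n-th_residue} together with $(1)\Rightarrow(2)$ (a field isomorphism obviously induces a residue ring isomorphism), so the only genuinely new content is the passage between $(2)$ and $(3)$, which is a routine application of Tolliver's functors.
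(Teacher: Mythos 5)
Your argument is correct and matches what the paper intends: the corollary is stated without proof precisely because $(1)\Leftrightarrow(2)$ is Fact \ref{fact:lift_n-th_residue}, $(1)\Rightarrow(3)$ is immediate, and $(3)\Rightarrow(1)$ follows from Theorem \ref{thm:mainhomlifting} via the chain of inequalities you quote. Your alternative detour $(3)\Rightarrow(2)\Rightarrow(1)$ via $\Tr$ also works, with the one caveat you rightly flag that $\Tr$ requires isometric homomorphisms — and an isomorphism over $p$ is isometric (after normalization) by Remark \ref{rem:vf_rigid_hom} together with the fact that its inverse is also a homomorphism.
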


The following is an analogy of \cite[Proposition 4.4]{LL} for a lifting map of homomorphisms of $n$-th valued hyperfields, which gives a funtoriality of lifting map in Section \ref{section:functoriality}.

\begin{proposition}\label{prop:funtoriality}
Let $(K_1,\nu_1)$, $(K_2,\nu_2)$, and $(K_3,\nu_3)$ be complete discrete valued fields of mixed characteristic $(0,p)$ with perfect residue fields. Suppose $K_1$ and $K_2$ have the same ramification index. Suppose $m,k> \max\{ e_{\nu_2}(p)(1+e_{\nu_1}^2(p), e_{\nu_3}(p)(1+e_{\nu_2}^2(p))\} $. Let $f\in \Hom_{\rg}(\H_n(K_1),\H_m(K_2))$ and $g\in \Hom_{\rg}(\H_m(K_2),\H_k(K_3))$. Then $\L_{n,k}^{\H}(g\circ f)=\L_{m,k}^{\H}(g)\circ \L_{n,m}^{\H}(f)$. 
\end{proposition}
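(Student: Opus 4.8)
The plan is to exploit the uniqueness built into the lifting construction. Since $m, k$ exceed the bounds $M(K_2)e_2^2 \geq e_{\nu_2}(p)(1+e_{\nu_1}^2(p))$-type thresholds (via the inequality $M(R)e_{\nu_1}(p)e_{\nu_2}(p)\le e_{\nu_2}(p)(1+e_{\nu_1}^2(p))$ recalled before Fact \ref{fact:lift_n-th_residue}), Theorem \ref{thm:mainhomlifting} guarantees that $\L_{n,k}^{\H}$, $\L_{n,m}^{\H}$, and $\L_{m,k}^{\H}$ are all well-defined, and Proposition \ref{prop:lifting_independent_uniformizer} guarantees that each target $\L_{n,k}^{\H}(g\circ f)$ is the \emph{unique} homomorphism $K_1\to K_3$ satisfying the two defining properties of an $(n,k)$-lifting of $g\circ f$ (approximation of $g\circ f$ on a uniformizer of $K_1$ up to valuation $>M(K_1)$, and compatibility with the reduction maps). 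So it suffices to verify that $h:=\L_{m,k}^{\H}(g)\circ \L_{n,m}^{\H}(f)$ itself is an $(n,k)$-lifting of $g\circ f$; the claimed equality then follows from uniqueness.

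First I would fix a uniformizer $\pi_1$ of $K_1$ with $R_1 = \W(k_1)[\pi_1]$ (Fact \ref{fact:witt_teichmuller}(2)), write $\tilde f := \L_{n,m}^{\H}(f)$, $\tilde g := \L_{m,k}^{\H}(g)$, and $h := \tilde g \circ \tilde f$. The reduction-map condition for $h$ is immediate: both $\tilde f$ and $\tilde g$ satisfy their own reduction conditions, so the residue-field maps compose, giving $(g\circ f)_{\red,1}\circ \H_1 = \H_1\circ h$; one checks that the reduction of $g\circ f$ is the composition of the reductions of $g$ and $f$, which is a straightforward diagram chase using that $f,g$ are over $p$ (Proposition \ref{prop:over_p=over_Z}) so $h$ is over $p$ as well. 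For the approximation condition, I need a representative $b$ of $(g\circ f)([\pi_1]_n)$ with $\nu_3(h(\pi_1) - b) > M(K_1)$. Pick a representative $b_1$ of $f([\pi_1]_n)$ in $R_2$ with $\nu_2(\tilde f(\pi_1) - b_1) > M(K_1)$, and a representative $b_2$ of $g([b_1]_m)$ in $R_3$ with $\nu_3(\tilde g(b_1) - b_2) > M(K_2) = M(K_1)$ (equality of Krasner numbers from Rem/Def \ref{rem/def:krasnernumber}, since $K_1, K_2$ share ramification index and there is a homomorphism between them coming from $\tilde f$). Then $b_2$ is a representative of $g(f([\pi_1]_n)) = (g\circ f)([\pi_1]_n)$, and
\begin{align*}
\nu_3\big(h(\pi_1) - b_2\big) &= \nu_3\big(\tilde g(\tilde f(\pi_1)) - \tilde g(b_1) + \tilde g(b_1) - b_2\big)\\
&\ge \min\{\nu_3(\tilde g(\tilde f(\pi_1) - b_1)),\ \nu_3(\tilde g(b_1) - b_2)\}.
\end{align*}
The second term is $> M(K_2) = M(K_1)$ by choice; the first term equals $\nu_2(\tilde f(\pi_1) - b_1)$ since $\tilde g$ is an isometric homomorphism of valued fields with value groups of the same (here, common normalized) scale, and that is $> M(K_1)$. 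Hence $\nu_3(h(\pi_1) - b_2) > M(K_1)$, as required.

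The main obstacle I anticipate is \emph{not} the composition estimate itself but the bookkeeping of representatives across the two hyperfield maps: one must be careful that a representative $b_1\in R_2$ of $f([\pi_1]_n)$, when fed into $g$, produces a representative $b_2$ of $g([b_1]_m) = g(f([\pi_1]_n))$ that is legitimately a representative of $(g\circ f)([\pi_1]_n)$ in $\H_k(K_3)$ — this uses that $g$ is a homomorphism of hyperfields so $g([b_1]_m)$ is a single element of $\H_k(K_3)$ and equals $g(f([\pi_1]_n))$. A secondary subtlety is confirming $M(K_2) = M(K_1)$: this is exactly the content of the last sentence of Rem/Def \ref{rem/def:krasnernumber}, applicable because $K_1$ and $K_2$ have the same ramification index and $\tilde f: K_1 \to K_2$ is a homomorphism. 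Once these two points are pinned down, uniqueness of $(n,k)$-liftings (Proposition \ref{prop:lifting_independent_uniformizer}, valid since $k > M(K_1)e_2$) closes the argument.
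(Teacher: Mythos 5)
Your proposal is correct and follows essentially the same route as the paper: you verify that $\L_{m,k}^{\H}(g)\circ\L_{n,m}^{\H}(f)$ satisfies both defining conditions of an $(n,k)$-lifting of $g\circ f$ (the reduction compatibility by a diagram chase, the approximation condition by choosing intermediate representatives $b_1$, $b_2$ and applying the ultrametric inequality together with $M(K_1)=M(K_2)$), and then conclude by the uniqueness established in Proposition \ref{prop:lifting_independent_uniformizer}. Your version makes the appeal to uniqueness more explicit than the paper's, but the underlying argument, the choice of representatives, and the valuation estimate are the same.
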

\begin{proof}
Take $f\in \Hom_{\rg}(\H_n(K_1),\H_m(K_2))$ and $g\in \Hom_{\rg}(\H_m(K_2),\H_k(K_3))$. First, we have that
\begin{align*}
(g\circ f)\circ \H_1 &= g\circ (f\circ \H_1)\\
&= g\circ (\H_1\circ \L_{n,m}^{\H}(f))\\
&= (g\circ \H_1)\circ \L_{n,m}^{\H}(f)\\
&= (\H_1\circ \L_{m,k}^{\H}(g))\circ \L_{n,m}^{\H}(f)\\
&= \H_1\circ(\L_{m,k}^{\H}(g)\circ \L_{n,m}^{\H}(f)).
\end{align*}
It remains to show that for a uniformizer $\pi_1$ of $K_1$, there is a representative $b$ of $g\circ f([\pi_1])$ such that $$\nu_3(\L_{m,k}^{\H}(g)\circ \L_{n,m}^{\H}(f)(\pi_1)-b)>M(K_1).$$ Take a representative $b_1$ of $f([\pi_1])$ such that $\nu_2(\L_{n,m}^{\H}(f)(\pi_1)-b_1)>M(K_1)$, and we have that $\nu_3(\L_{m,k}^{\H}(g)\circ \L_{n,m}^{\H}(f)(\pi_1)-\L_{m,k}^{\H}(g)(b_1))>M(K_1)$. Next consider a representative $b_2$ of $g([b_1])( =g\left (f([\pi_1])\right) )$ such that $\nu_3(\L_{m,k}^{\H}(g)(b_1)-b_2)>M(K_2)$. Since $K_1$ and $K_2$ have the same ramification indices, we have that $M(K_1)=M(K_2)$. Thus we conclude that $\nu_3(\L_{m,k}^{\H}(g)\circ \L_{n,m}^{\H}(f)(\pi_1)-b_2)>M(K_1)$, and set $b:=b_2$.
\end{proof}

\section{Valued hyperfields, truncated DVRs, and valued fields}\label{section:functoriality}
In this section, we figure out relationships between valued hyperfields, truncated DVRs, and complete discrete valued fields of mixed characteristic. We first define a notion of finitely ramified valued hyperfields.

\begin{definition}\label{def:char_ramificationindex_vhf}
Let $(H,\nu)$ be a discrete valued hyperfield of length $l$. Let $R_l(H)=R$. Suppose the residue field $k(H)$ has the characteristic $p\ge 0$.
\begin{enumerate}
	\item We say $H$ is of {\em equal characteristic $(p,p)$} if $p=0$ in $R$. Otherwise, we say $H$ is of {\em mixed characteristic $(0,p)$}.
	\item Suppose $H$ is {\em of mixed characteristic $(0,p)$}. We say $H$ is {\em finitely ramified} if $\nu_R(p)=e<\infty$, and $e$ is called the {\em ramification index} of $H$.
	\item Suppose $H$ is finitely ramified. We say $H$ is normalized if $\theta_H=1/e$ where $e$ is the ramification index of $H$.
\end{enumerate}
\end{definition}
\noindent Note that any discrete valued hyperfield of mixed characteristic need not be finitely ramified.

\begin{proposition}\label{prop:DVR_enough}
Let $K$ be a discrete valued field. Let $K_1$, $K_2$, and $K_3$ be discrete valued fields of mixed characteristic $(0,p)$ having the same ramification index $e$.
\begin{enumerate}
	\item $T_n(K)=(R_n(K),M_n(K),\epsilon_n)$ is a triple, where $M_n(K):=\fm_K/\fm_K^{n+1}$ and the map $\epsilon_n$ is induced by the inclusion $\fm_K\subset R(K)$.
	\item For every triple $T=(R_n(K),M,\epsilon)$, we have $T\cong T_n(K)$.
	\item Let $n>e$. Each morphism $f\in \Hom(R_n(K_1), R_n(K_2))$ induces a morphism $T_n(f)\in \Hom(T_n(K_1),T_n(K_2))$. For $f\in \Hom(R_n(K_1),R_n(K_2))$ and $g\in \Hom(R_n(K_2),R_n(K_3))$, we have $T_{n}(g\circ f)=T_{n}(g)\circ T_{n}(f)$.
\end{enumerate}
\end{proposition}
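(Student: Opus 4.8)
The plan is to verify the three statements in turn, leaning on the fact that $R_n(K)$ is a truncated DVR of length $n$ (its maximal ideal is $\fm_K/\fm_K^n$, generated by the image of a uniformizer $\pi$) and on Example \ref{ex:vhf_of_valuedfield}, which already identifies $\Tr(\H_n(K))$ with $(R_n(K),\fm_K/\fm_K^{n+1},\epsilon)$. For (1), I would first check that $M_n(K)=\fm_K/\fm_K^{n+1}$ is a free $R_n(K)=R(K)/\fm_K^n$-module of rank $1$: multiplication by $\pi$ gives an $R(K)$-linear isomorphism $R(K)/\fm_K^n \to \fm_K/\fm_K^{n+1}$, which descends to the required generator $\bar\pi=\pi+\fm_K^{n+1}$. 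The map $\epsilon_n$ induced by the inclusion $\fm_K\hookrightarrow R(K)$ is well defined because $\fm_K^{n+1}\subset\fm_K^n$, it is $R_n(K)$-linear, and its image is exactly $\fm_K/\fm_K^n$, the maximal ideal of $R_n(K)$; so $T_n(K)$ is a triple in the sense of Definition \ref{def:triples}.

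For (2), let $T=(R_n(K),M,\epsilon)$ be an arbitrary triple with the same truncated DVR $R_n(K)$. Since $M$ is free of rank $1$, pick a generator $\Pi$ of $M$; then $\epsilon(\Pi)$ generates the maximal ideal $\fm_K/\fm_K^n$, so $\epsilon(\Pi)=u\bar\pi$ for some unit $u\in R_n(K)^\times$. I would then define $\phi:M\to M_n(K)$ by $\Pi\mapsto u\bar\pi\,(=\,u\,(\pi+\fm_K^{n+1}))$; this is an $R_n(K)$-module isomorphism (it sends a generator to a generator, using that $u\bar\pi$ generates $M_n(K)$ too), and by construction $\epsilon_n\circ\phi=\epsilon$. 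Hence $(\id,\phi):T\to T_n(K)$ is an isomorphism of triples. A small point to be careful about: the isomorphism $R(K)/\fm_K^n\xrightarrow{\cdot\pi}\fm_K/\fm_K^{n+1}$ is what guarantees $\bar\pi$ is a genuine $R_n(K)$-basis of $M_n(K)$, and I should note this does not need $n>e$.

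For (3), given $f\in\Hom(R_n(K_1),R_n(K_2))$, I want a morphism of triples $T_n(f)=(r,f,\eta):T_n(K_1)\to T_n(K_2)$ in the sense of Definition \ref{def:morphism_triples}. The ramification index $r$ should be read off from $f$: since both $K_i$ have ramification index $e$ and $n>e$, the element $p$ is visible in $R_n(K_i)$ with $\nu_{R_n(K_i)}(p)=e$, so $f(p)$ has valuation $e$, forcing $f(\fm_{K_1}/\fm_{K_1}^n)^r \subseteq$ the appropriate power and pinning down $r$ as the ratio of the valuations of uniformizers — here $r=1$ because $f$ sends a uniformizer to an element of valuation $1$ (this is exactly where $n>e$ is used, to see $\nu(p)$ and hence normalize). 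Then $\eta:M_n(K_1)\to M_n(K_2)^{\otimes r}=M_n(K_2)$ is induced by $f$ on $\fm_{K_1}/\fm_{K_1}^{n+1}$; one checks $f\circ\epsilon_{n,1}=\epsilon_{n,2}\circ\eta$ directly from the inclusions, and that $\eta$ induces an isomorphism $M_n(K_1)\otimes_{R_n(K_1)}R_n(K_2)\cong M_n(K_2)$ because it carries the basis $\bar\pi_1$ to a generator of $M_n(K_2)$ over $R_n(K_2)$. Functoriality $T_n(g\circ f)=T_n(g)\circ T_n(f)$ is then a matter of unwinding the composition law $(r_1,f_1,\eta_1)\circ(r_2,f_2,\eta_2)=(r_1r_2,f_1\circ f_2,\eta_1^{\otimes r_2}\circ\eta_2)$ with all $r_i=1$, so it reduces to $f_1\circ f_2$ on $R$ and $\eta_1\circ\eta_2$ on $M$, which is immediate.

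The main obstacle I anticipate is purely bookkeeping in part (3): correctly identifying the ramification index $r$ attached to $f$ and checking the compatibility $f\circ\epsilon=\epsilon\circ\eta$ together with the base-change isomorphism condition, rather than anything conceptually deep. The hypothesis $n>e$ enters precisely to ensure that $p\neq 0$ in $R_n(K_i)$ has the expected valuation $e<n$, so that homomorphisms of the $n$-th residue rings are forced to respect the normalization and give $r=1$; without it the valuation of $p$ could be truncated away and $r$ would be ambiguous.
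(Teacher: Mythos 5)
Your treatment of (1) and (2) is correct and essentially the same as the paper's: for (1), multiplication by a uniformizer $\pi$ gives the $R(K)$-linear isomorphism $R(K)/\fm_K^n\to\fm_K/\fm_K^{n+1}$, exhibiting $\bar\pi$ as a basis of $M_n(K)$; for (2), you pick a generator $\Pi$ of $M$, note $\epsilon(\Pi)$ is a unit multiple of the image of a uniformizer, and build an isomorphism of triples by sending $\Pi$ to a lift. (The paper just absorbs your unit $u$ into the uniformizer; same argument. Minor notational slip on your part: $\epsilon(\Pi)$ lives in $R_n(K)$, while $u\bar\pi$ lives in $M_n(K)$, so writing $\epsilon(\Pi)=u\bar\pi$ conflates the two; what you mean is $\epsilon(\Pi)=\tilde u\pi+\fm_K^n$ for some lift $\tilde u$.)

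For (3) there is a genuine gap, and it sits exactly where you predicted the difficulty would \emph{not} be. You write that $\eta$ is ``induced by $f$ on $\fm_{K_1}/\fm_{K_1}^{n+1}$'' and that functoriality is then ``immediate.'' But $f$ is a map $R(K_1)/\fm_{K_1}^n\to R(K_2)/\fm_{K_2}^n$, whereas $\eta$ must be a map $\fm_{K_1}/\fm_{K_1}^{n+1}\to\fm_{K_2}/\fm_{K_2}^{n+1}$ --- one level deeper. The structure map $\epsilon_{n,2}:\fm_{K_2}/\fm_{K_2}^{n+1}\to R(K_2)/\fm_{K_2}^n$ has nontrivial kernel $\fm_{K_2}^n/\fm_{K_2}^{n+1}$, so the compatibility $f\circ\epsilon_{n,1}=\epsilon_{n,2}\circ\eta$ determines $\eta(\bar\pi_1)$ only modulo $\fm_{K_2}^n/\fm_{K_2}^{n+1}$; there is no canonical induced $\eta$. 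The paper addresses this by making an explicit choice: write $f(\pi_1+\fm_{K_1}^n)=a_f\pi_2+\fm_{K_2}^n$ for a unit $a_f\in R(K_2)^\times$ and set $\eta_f(\bar\pi_1)=a_f\pi_2+\fm_{K_2}^{n+1}$, and then the proof of $T_n(g\circ f)=T_n(g)\circ T_n(f)$ is precisely the verification that the lifts $a_f$, $b_g$, $b_{g\circ f}$ can be chosen compatibly --- the paper's step ``by choosing $b_g$ and $b_f$ properly, we may assume $b_gb_f-b_{g\circ f}\in\fm_{K_3}^{n+1}$.'' That coherence of choices is the real content of (3), not the identification $r=1$ (which you handle correctly via $n>e$ forcing $\nu(p)=e$ to survive in both $R_n(K_i)$, matching the paper's ``by the choice of $n$'').
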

\begin{proof}
(1) It is clear.\\

(2) Let $R:=R_n(K)$. Given a triple $T=(R,M,\epsilon)$, let $\Pi$ be a generator of $M$. Since $\epsilon(M)=\fm_R$, $\epsilon(\Pi)=\pi+\fm_K$ for a uniformizer $\pi$ of $K$. Define a map $\eta:M\rightarrow \fm_K/\fm_K^{n+1},\ \Pi\mapsto \pi/\fm_K^{n+1}$. Then the triple $(1,\id_R,\eta)$ gives an isomorphism from $T$ to $T_n(K)$.\\

(3) Let $f\in \Hom(R_n(K_1),R_n(K_2))$. Let $\pi_1$ and $\pi_2$ be uniformizers of $K_1$ and $K_2$ respectively. By the choice of $n$, $f(\pi_1/\fm_{K_1}^n)$ generates the maximal ideal of $R_n(K_2)$ so that $f(\pi_1/\fm_{K_1}^n)=(a_f\pi_2)/\fm_{K_2}^{n+1}$ for a unit $a_f\in R(K_2)^{\times}$. Define a map $\eta_f:M_n(K_1)\rightarrow M_n(K_2)$ sending $\pi_1/\fm_{K_1}^{n+1}$ to $(a_f\pi_2)/\fm_{K_2}^{n+1}$. Then the triple $T_n(f):=(1,f,\eta_f)$ gives a morphism from $T_n(K_1)$ to $T_n(K_2)$. Note that $T_n(f)$ does not depend on the choices of $\pi_1$ and $\pi_2$ because $M_n(K_1)$ and $M_n(K_2)$ are free $R_n(K_1)$ and $R_n(K_2)$-modules of rank $1$ respectively.

Now we show $T_n$ is commute with the composition. Let $\pi_3$ be a uniformizer of $K_3$. Fix $f\in \Hom(R_n(K_1),R_n(K_2))$ and $g\in \Hom(R_n(K_2),R_n(K_3))$. There are $a_f\in R(K_2)^{\times}$, $b_g, b_{g\circ f}\in R(K_3)^{\times}$ such that $f(\pi_1/\fm_{K_1}^n)=(a_f\pi_2)/\fm_{K_2}^n$, $g(\pi_2/\fm_{K_1}^n)=(b_g\pi_3)/\fm_{K_3}^n$, and $g\circ f(\pi_1/\fm_{K_1}^n)=(b_{g\circ f}\pi_3)/\fm_{K_3}^n$. Let $b_f\in R(K_3)^{\times}$ such that $g(a_f/\fm_{K_2}^n)=b_f/\fm_{K_3}^n$. Then we have that $b_{g\circ f}/\fm_{K_3}^n=(b_gb_f)/\fm_{K_3}^n$. By choosing $b_g$ and $b_f$ properly, we may assume that $b_gb_f-b_{g\circ f}\in \fm_{K_3}^{n+1}$ $(\dagger)$. Define $\eta_{g\circ f}: M_n(K_1)\rightarrow M_n(K_3), \pi_1/\fm_{K_1}^{n+1}\mapsto (b_{g\circ f}\pi_3)/\fm_{K_3}^{n+1}$. By $(\dagger)$, we have that $\eta_{g\circ f}=\eta_g\circ \eta_f$. Therefore, \begin{align*}
T_n(g\circ f)&= (1,g\circ f, \eta_{g\circ f})\\
&= (1,g\circ f, \eta_g\circ \eta_f)\\
&= (1,g,\eta_g)\circ (1,f,\eta_f)\\
&= T_n(g)\circ T_n(f).
\end{align*}
\end{proof}

\begin{corollary}\label{cor:vhf_unique_realizable}
Let $H$ be a finitely ramified discrete valued hyperfield of mixed characteristic with the perfect residue field and let $e$ be the ramification index. Suppose $l(:=l(H))>e(1+\nu_R(e))$ so that $e$ is not zero in $R(:=R_l(H))$. Then there is a unique complete discrete valued field $K$ (up to isomorphic) such that $\H_l(K)\cong H$. 
\end{corollary}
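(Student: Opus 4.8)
\emph{Proof proposal.} The plan is to push $H$ through Tolliver's correspondence between discrete valued hyperfields and triples, realize the underlying truncated DVR inside a complete discrete valued field by means of the lifting machinery of \cite{LL}, and then check that the field produced is pinned down by $H$.

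\emph{Existence.} By Fact~\ref{fact:vhf_to_triple}, $\Tr(H)=(R,M,\epsilon)$ is a triple whose underlying ring $R=R_l(H)$ is a truncated DVR of length $l$ with perfect residue field of characteristic $p$, and by Definition~\ref{def:char_ramificationindex_vhf} we have $\nu_R(p)=e$. That ``$l>e(1+\nu_R(e))$'' is a finite inequality forces $\nu_R(e)<\infty$, i.e.\ $e$ is nonzero in $R$; in particular $e<l$, so $R$ is an object of $\CR_{p,e}^l$. Since $l$ lies past the threshold required for the lifting functor of \cite{LL} (see the remark on the bound below), Fact~\ref{fact:motivation_question_2} furnishes $\L\colon\CR_{p,e}^l\to\CC_{p,e}$, and $K:=\L(R)$ is a complete discrete valued field of ramification index $e$ with $R_l(K)\cong R$. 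By Example~\ref{ex:vhf_of_valuedfield} and Proposition~\ref{prop:DVR_enough}, $\Tr(\H_l(K))$ is a triple with underlying ring $R_l(K)\cong R$; as any two triples with isomorphic underlying truncated DVR are isomorphic (Proposition~\ref{prop:DVR_enough}(2)), $\Tr(\H_l(K))\cong\Tr(H)$. Applying $\U$ and the natural isomorphism $\U\circ\Tr\cong\operatorname{id}$ of Remark~\ref{rem:triple_to_vhf} gives $\H_l(K)\cong\U(\Tr(\H_l(K)))\cong\U(\Tr(H))\cong H$, the normalization $\theta=1/e$ being automatic because $K$ carries its normalized valuation.

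\emph{Uniqueness.} Suppose $K_1,K_2$ are complete discrete valued fields with $\H_l(K_1)\cong H\cong\H_l(K_2)$. Applying $\Tr$ and taking underlying rings yields a ring isomorphism $R_l(K_1)\cong R_l(K_2)$; since $\nu_R(p)=e$ and $l>e$, both $K_i$ have ramification index $e$, so $K_1,K_2\in\CC_{p,e}$. A ring isomorphism fixes the prime subring, hence, through $\U$, becomes an isometric isomorphism $g\colon\H_l(K_1)\to\H_l(K_2)$ that is over $p$. Lift $g$ by Theorem~\ref{thm:mainhomlifting} (applicable by the bound below) to an isometric homomorphism $\tilde g\colon K_1\to K_2$. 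Then $\tilde g$ restricts to the Witt-ring isomorphism lifting the residue-field isomorphism induced by $g$ (Fact~\ref{fact:witt_teichmuller}(1)), and it sends a uniformizer of $K_1$ to a uniformizer of $K_2$; since $R(K_2)=\W(k(K_2))[\pi_2]$ for any such uniformizer $\pi_2$ (Fact~\ref{fact:witt_teichmuller}(2)), $\tilde g$ is surjective, so $K_1\cong K_2$.

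\emph{The bound, and the main obstacle.} All the lifting input above --- the functor $\L$, Theorem~\ref{thm:mainhomlifting}, and the uniqueness of lifts (Proposition~\ref{prop:lifting_independent_uniformizer}) --- is available once $l$ exceeds the Krasner-number threshold $M(\cdot)\,e^2$; the explicit majorant $e(1+e^2)$ quoted in \cite{LL} is too coarse here, since it may exceed $l$. The decisive point is the sharp estimate $M(\cdot)\le 1/e+v_p(e)$, i.e.\ $M(\cdot)\,e\le 1+\nu_R(e)$, for a complete valued field of ramification index $e$: the different of a totally ramified degree-$e$ extension of the fraction field of $\W(k)$ has normalized valuation at most $(e-1)/e+v_p(e)$, and it factors as a product of the $e-1$ conjugate differences $\pi-\sigma\pi$, each of valuation $\ge 1/e$, so the largest of them --- namely $M(\cdot)$ --- is at most $1/e+v_p(e)$. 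Hence $l>e(1+\nu_R(e))\ge M(\cdot)\,e^2$, and the single hypothesis ``$e$ is nonzero in $R$, with a little room to spare'' simultaneously supplies the field and its rigidity. Extracting this refined threshold from \cite{LL} and matching it to $e(1+\nu_R(e))$ is the one genuinely load-bearing computation; the remaining steps --- chiefly that the comparison isomorphisms above are isometric --- are routine and are handled by Tolliver's functoriality (Remark~\ref{rem:triple_to_vhf}).
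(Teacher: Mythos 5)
Your existence half and the Krasner-number estimate $M(\cdot)\le 1/e+v_p(e)$ are both correct, and the reduction to the truncated DVR via Tolliver's $\Tr$/$\U$ is the right idea. The gap is in the uniqueness step: you claim that, since a ring isomorphism $R_l(K_1)\to R_l(K_2)$ fixes the prime subring, the induced map $\U(T_l(\cdot))$ is an isometric hyperfield isomorphism \emph{over $p$}, and you then invoke Theorem~\ref{thm:mainhomlifting}. That claim is false, and the paper's own Example~\ref{ex:Hom_and_HomoverZ} is a direct counterexample: the ring automorphism $a+bx\mapsto a+4bx$ of $R_4(\BQ_3(\sqrt 3))$ fixes $\BZ$ (in particular $3\mapsto 3$), yet $\U(T_4(f))$ sends $[3]=[\sqrt3]^2$ to $[4\sqrt3]^2=[48]\neq[3]$, so it is not over $p$. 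Fixing $p$ at the level of the truncated DVR does not control the ``over $p$'' condition on the hyperfield side, which is exactly what Remark/Definition~\ref{rem/def:general_HomoverZ} is designed to encode separately.

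The detour back to hyperfields is unnecessary. Once you have $R_l(K_1)\cong R_l(K_2)$ in $\CR_{p,e}^l$ (which follows from $\Tr$ applied to the given isometric isomorphisms), Fact~\ref{fact:lifting_DVR_valuedfield}(3) (valid because the corollary's hypothesis $l>e(1+\nu_R(e))=l_{p,e}$ is exactly the threshold for the lifting functor $\L$) gives $K_1\cong\L(\Pr_l(K_1))\cong\L(\Pr_l(K_2))\cong K_2$ directly. Note also that your concern about the ``too coarse'' majorant $e(1+e^2)$ is misplaced for this corollary: that bound is the one in Fact~\ref{fact:lift_n-th_residue}/Corollary~\ref{cor:criteria_isom}, whereas the relevant lifting functor statement (Fact~\ref{fact:lifting_DVR_valuedfield}) already uses $l_{p,e}=e(1+\nu_R(e))$, so the refined estimate you derive, while correct, is already packaged in the cited fact rather than a missing ingredient.
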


\begin{rem/def}\label{rem/def:general_HomoverZ}
Let $H_1$ and $H_2$ be finitely ramified discrete valued hyperfield of mixed characteristic $(0,p)$ with perfect residue fields. Let $\Tr(H_1)=(R_1,M_1,\epsilon_1)$ and $\Tr(H_2)=(R_2,M_2,\epsilon_2)$, and we identify $H_1=\U(\Tr(H_1))$ and $H_2=\U(\Tr(H_2))$. Suppose they have the same length $l$ and the same ramification index $e$ so that $n=\nu_1(e)=\nu_2(e)\in \{0,1,\ldots, l-1\}\cup\{\infty\}$, where $\nu_1=\nu_{R_1}$ and $\nu_2=\nu_{R_2}$. Suppose $l>e$. Since $l>e$, $p$ is not zero in $R_l$ and $R_2$ and $\nu_1(p)=\nu_2(p)=e$. So $p=a_1\pi_1^e$ in $R_1$ and $p=a_2\pi_2^e$ in $R_2$ for some units $a_1,a_2$ and some uniformizers $\pi_1,\pi_2$ in $R_1$ and $R_2$ respectively. Take $\Pi_i\in M_i$ such that $\epsilon_i(\Pi_i)=\pi_i$ for $i=1,2$. Then $\nu_{\Tr(H_i)}(a_i \Pi_i^{\otimes e})=e$ for $i=1,2$. We say a homomorphism $f:H_1\rightarrow H_2$ is {\em over $p$} if $f(a_1\Pi_1^{\otimes e})=a_2\Pi_2^{\otimes e}$. Denote $\Hom_{\rg}(H_1,H_2)$ for the set of all homomorphisms from $H_1$ to $H_2$, which are over $p$. 
\end{rem/def}

\begin{example}\label{ex:Hom_and_HomoverZ}
Let $K_1=K_2=\BQ_3(\sqrt{3})$ and $\fm$ be the maximal ideal of the valuation ring $\BZ_3[\sqrt{3}]$. Note that $R:=R_4(K_1)=R_4(K_2)\cong (\BZ_3/9\BZ_3)[x]/(x^2-3)$. Then there is an isomorphism $f:a+bx\mapsto a+4bx$ in $\Hom(R_4(K_1),R_4(K_2))$. Then $f$ induces an isomorphism $T_4(f):(R,\fm/\fm^4,\epsilon_4)\rightarrow (R,\fm/\fm^4,\epsilon_4)$ and it induces an isometric isomorphism $U(T_4(f)):\H_4(K_1)\rightarrow \H_4(K_2),\sqrt{3}(1+\fm^4)\mapsto4\sqrt{3}(1+\fm^4)$, which is not in $\Hom_{\rg}(\H_4(K_1),\H_4(K_2))$. Suppose $U(T_4(f))$ is over $p$. Then $3(1+\fm^4)=U(T_4(f))(3(1+\fm^4))=U(T_4(f))((\sqrt{3})^2(1+\fm^4))=(4\sqrt{3})^2(1+\fm^4)=4^2 3(1+\fm^4)$. So, we have $(1+\fm^4)=4^2 (1+\fm^4)$, which is impossible, because $15\notin \fm^4=9\BZ_3$.
\end{example}

\bigskip

Now we introduce some categories of valued hyperfields, truncated DVRs, and valuation rings and we study relationships between them. We recall two categories of truncated DVRs and valuation rings, which were used to generalize the functoriality of unramified valuation rings in \cite[Section 4]{LL}. For a prime number $p$ and a positive integer $e$, let $\CC_{p,e}$ be a category consisting of the following data :
\begin{itemize}
	\item $\ob(\CC_{p,e})$ is the family of complete discrete valuation rings of mixed characteristic having perfect residue fields of characteristic $p$ and the ramification index $e$; and
	\item $\mor_{\CC_{p,e}}(R_1,R_2):=\Hom(R_1,R_2)$ for $R_1$ and $R_2$ in $\ob(\CC_{p,e})$.
\end{itemize}
Let $\CR_{p,e}^{n}$ be a category consisting of the following data :
\begin{itemize}
	\item For $n\le e$, $\ob(\CR_{p,e}^{n})$ is the family of truncated DVRs $\overline{R}$ of length $n$ with perfect residue fields of characteristic $p$, and for $n>e$, $\ob(\CR_{p,e}^{n})$ is the family of truncated DVRs $\overline{R}$ of length $n$ with perfect residue fields of characteristic $p$ such that $p\in \overline{\fm}^{e}\setminus\overline{\fm}^{e+1}$ where $\overline{\fm}$ is the maximal ideal of $\ov{R}$; and
	\item $\mor_{\CR_{p,e}^{n}}(\overline{R_1},\overline{R_2}) :=\Hom(\overline{R_1},\overline{R_2})$ for $\overline{R_1}$ and $\ov{R_2}$ in $\ob(\CR_{p,e,}^{n})$,
\end{itemize}
Note that for $e_1,e_2\ge 1$ and for $n\le e_1,e_2$, two categories $\CR_{p,e_1}^n$, $\CR_{p,e_2}^n$ are the same. For each $m>n$, let $\Pr_n:\ \CC_{p,e}\rightarrow \CR_{p,e}^n$ and $\Pr^m_n:\ \CR_{p,e}^m\rightarrow \CR_{p,e}^n$ be the canonical projection functors respectively. Given a prime number $p$ and a positive integer $e$, let $l_{p,e}:=e(1+\nu(e))$ for some(every) $R\in \ob(\CC_{p,e})$. Note that for $p\not| e$, we have that $l_{p,e}=e$.
\begin{fact}\cite[Definition 4.2, Theorem 4.7]{LL}\label{fact:lifting_DVR_valuedfield}
Fix a prime number $p$ and a positive integer $e$. For every $n>l_{p,e}$, there is a functor $\L:\CR_{p,e}^n \longrightarrow \CC_{p,e}$, called an {\em $n$-th lifting functor}, which satisfies the following:
 \begin{enumerate}
 \item
 $(\Pr_n\circ \L)(\ov{R})\cong \ov{R}$ for each $\overline{R}$ in $\ob(\CR_{p,e}^n)$.
  \item
 $\Pr_1 \circ\L$ is equivalent to $\Pr^{n}_1$.
 \item
 $\L\circ \Pr_n$ is equivalent to  $\id_{\CC_{p,e}}$.
 \end{enumerate}
Moreover, there is a unique $n$-th lifting functor $\L$ satisfying
\begin{enumerate}
\item[(4)] For each $g\in \mor_{\CR_{p,e}^n}(\overline{R}_1,\overline{R}_2)$ and for any $x\in \L(\overline{R}_1)$, there is a representative $b_x$ of $(\Pr_n\circ \L)(g)(x+\fm_1^n)$ such that $$\nu_2(\L(g)(x)-b_x)>M(\L(\overline{R}_1)).$$
\end{enumerate}
\end{fact}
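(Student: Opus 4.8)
The plan is to construct $\L$ explicitly and then read off all the asserted properties from the rigidity of the lifting theorem for $n$-th residue rings, Fact \ref{fact:lift_n-th_residue}, together with Krasner's lemma (Fact \ref{lem:Krasner}). \emph{On objects:} fix $\ov R\in\ob(\CR_{p,e}^n)$ with perfect residue field $k$ and maximal ideal $\ov\fm=(\ov\pi)$. Since $n>l_{p,e}\ge e$ we have $p\in\ov\fm^e\setminus\ov\fm^{e+1}$, and the Teichm\"uller system of $\ov R$ (Fact \ref{fact:witt_teichmuller}(3)) gives a canonical map $\W(k)\to\ov R$ through which $\ov R\cong\W_s(k)[\ov\pi]/(\ov q,\ov\pi^{\,n})$ for $s=\lceil n/e\rceil$ and an Eisenstein-type $\ov q\in\W_s(k)[X]$ of degree $e$ recording the minimal relation of $\ov\pi$ over the Witt part. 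Lift $\ov q$ to a genuine Eisenstein polynomial $q\in\W(k)[X]$ of degree $e$ and set $\L(\ov R):=R:=\W(k)[X]/(q)$: this is a complete discrete valuation ring of mixed characteristic $(0,p)$ with residue field $k$ and $\nu_R(p)=e$, so $R\in\ob(\CC_{p,e})$, and reduction modulo $\fm_R^n$ recovers $\W_s(k)[\ov\pi]/(\ov q,\ov\pi^{\,n})\cong\ov R$. The auxiliary choices (the uniformizer $\ov\pi$ and the lift $q$) alter $q$ only by a polynomial whose coefficients have $R$-valuation $\ge n>e\,M(R)$, so by Krasner's lemma a root of one lift generates the same field as a nearby root of another; hence $R$ is well defined up to isomorphism. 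This is exactly where the threshold $n>l_{p,e}=e(1+\nu(e))$ is used, via the inequality $\nu(e)+1\ge M(R)$ coming from Rem/Def \ref{rem/def:krasnernumber} (which sharpens to $M(R)=1$ when $p\nmid e$).

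\emph{On morphisms and functoriality:} given $g\in\mor_{\CR_{p,e}^n}(\ov R_1,\ov R_2)$, transport it across the isomorphisms $R_i/\fm_{R_i}^n\cong\ov R_i$ of the previous step to obtain a homomorphism of $n$-th residue rings $R_1/\fm_{R_1}^n\to R_2/\fm_{R_2}^n$. As $R_1$ and $R_2$ share the ramification index $e$ we have $M(R_1)=M(R_2)$ (Rem/Def \ref{rem/def:krasnernumber}), and $n$ is large enough to apply Fact \ref{fact:lift_n-th_residue} in its effective form, where the threshold on $n$ is governed by $M(R_1)e$: the displayed homomorphism lifts to a ring homomorphism $R_1\to R_2$, and among all such lifts exactly one, call it $\L(g)$, satisfies the approximation condition $\nu_2(\L(g)(x)-b_x)>M(R_1)$ of item $(4)$. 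Uniqueness of this distinguished lift yields $\L(\id_{\ov R})=\id_R$ and $\L(g\circ f)=\L(g)\circ\L(f)$ at once: in each case both sides lift the same residue-ring homomorphism and satisfy the same approximation bound, the composite estimate using $M(R_1)=M(R_2)$ exactly as in the proof of Proposition \ref{prop:funtoriality}. Thus $\L:\CR_{p,e}^n\to\CC_{p,e}$ is a functor.

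\emph{Naturality and uniqueness:} condition $(1)$ is the isomorphism $\Pr_n(\L(\ov R))=R/\fm_R^n\cong\ov R$ produced above. Condition $(2)$ holds because $\Pr_1(\L(\ov R))=R/\fm_R=k=\ov R/\ov\fm=\Pr^n_1(\ov R)$, naturally in $\ov R$. For $(3)$, given $R\in\ob(\CC_{p,e})$ the rings $\L(\Pr_n R)$ and $R$ have isomorphic $n$-th residue rings, so Fact \ref{fact:lift_n-th_residue} applied to the identity yields an isomorphism $\L(\Pr_n R)\cong R$, and uniqueness of the approximating lift makes these isomorphisms natural. Finally, if $\L'$ is another $n$-th lifting functor satisfying $(4)$, then $\L'(\ov R)$ has $n$-th residue ring isomorphic to $\ov R$, hence to that of $\L(\ov R)$, so $\L'(\ov R)\cong\L(\ov R)$ by Fact \ref{fact:lift_n-th_residue}; and $\L'(g)$ and $\L(g)$ are both lifts of one residue-ring homomorphism obeying the approximation bound, hence equal --- so $\L'\cong\L$.

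The step I expect to be the main obstacle is the rigidity input: showing that two liftings of a single $n$-th residue homomorphism that both satisfy the approximation condition must coincide. This is exactly what forces the effective threshold --- one needs $n/e$ to strictly exceed the Krasner number $M(R)$, controlled through Rem/Def \ref{rem/def:krasnernumber} and equal to $1$ in the tamely ramified case $p\nmid e$, which is why $n>l_{p,e}$ is the right hypothesis. Everything else --- the object construction, the functor axioms, and conditions $(1)$--$(3)$ --- is then bookkeeping around Facts \ref{fact:lift_n-th_residue} and \ref{fact:witt_teichmuller} and Krasner's lemma.
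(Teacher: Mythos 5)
This statement is a \emph{Fact} quoted from \cite[Definition 4.2, Theorem 4.7]{LL}; the paper at hand offers no proof of it, so there is no in-paper argument to compare yours against. What the paper actually proves from scratch is the hyperfield analogue, Theorem~\ref{thm:lifting_vhf_valuedfield}, which it obtains by composing a lifting functor of exactly this kind with the Tolliver functor $\To$.

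As for the sketch itself, two things deserve attention. First, the well-definedness of the object map is not correctly justified: if $\ov\pi'=u\ov\pi$ is a second uniformizer of $\ov R$ the resulting minimal relations $\ov q$ and $\ov q'$ are not merely perturbations of one another, so a direct Krasner comparison of lifted roots of $q$ and $q'$ does not apply. The standard route is to note that by construction $\L(\ov R)/\fm^n\cong\ov R$ for either choice, and then invoke the rigidity of Fact~\ref{fact:lift_n-th_residue} for the identity on $\ov R$ to conclude the two candidate lifts are isomorphic DVRs; your sketch gestures at this but presents the Krasner estimate on $q$ as the reason, which is not right. Second, your threshold bookkeeping does not close: the hypothesis is $n>l_{p,e}=e(1+\nu(e))$, while the only effective bound quoted in this paper for lifting residue-ring homomorphisms, Fact~\ref{fact:lift_n-th_residue}, requires $n>e(1+e^2)$, which is strictly larger whenever $p\mid e$. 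You assert $M(R)\le 1+\nu(e)$ and that this ``sharpens to $M(R)=1$ when $p\nmid e$,'' but neither of these is stated in the present paper and both would have to be imported from \cite{LL}; moreover even $M(R)\le 1+\nu(e)$ feeds through Proposition~\ref{prop:lifting_independent_uniformizer} or Theorem~\ref{thm:mainhomlifting} to give thresholds like $M(R)e$ or $M(R)e^2$, neither of which visibly equals $l_{p,e}$. Finally, you are in effect proving one Fact from \cite{LL} by appealing to a neighbouring Fact from the same source (Fact~\ref{fact:lift_n-th_residue}); that is a reasonable reconstruction, but it is citation plus glue rather than an independent argument, and it does not by itself establish the sharper $l_{p,e}$ hypothesis claimed in the statement.
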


Next we introduce two categories of valued hyperfields. Let $\H_{p,e}^n$ be a category consisting of the following data :
\begin{itemize}
	\item $\ob(\H_{p,e}^n)$ is the family of discrete normalized valued hyperfield of length $n$ and mixed characteristic $(0,p)$ having perfect residue fields, and for $n>e$ in addition, having ramification indices $e$; and
	\item For $n\le e$, $\mor_{\H_{p,e}^n}(H_1,H_2):=\Iso(H_1,H_2)$ and for $n>e$, $\mor_{\H_{p,e}^n}(H_1,H_2):=\Iso_{\BZ}(H_1,H_2)$, where $\Iso_{\BZ}(H_1,H_2)=\Hom_{\rg}(H_1,H_2)\cap \Iso(H_1,H_2)$ for $H_1$ and $H_2$ in $\ob(\H_{p,e}^n)$.
\end{itemize}
Let $\widehat \H_{p,e}^n$ be a category consisting of the following data :
\begin{itemize}
	\item $\ob(\widehat \H_{p,e}^n)=\ob(\H_{p,e}^n)$; and
	\item $\mor_{\widehat \H_{p,e}^n}(H_1,H_2):=\Iso(H_1,H_2)$ for $H_1$ and $H_2$ in $\ob(\widehat \H_{p,e}^n)$.
\end{itemize}
By Remark/Definition \ref{rem/def:residuefield_vhf}, we have functors $\res_{p,e}^n:\H_{p,e}^n\rightarrow \CR_{p,e}^1$ and $\wres:\wH_{p,e}^n\rightarrow \CR_{p,e}^1$. Next we introduce lifting functors for two categories $\H_{p,e}^n$ and $\wH_{p,e}^n$, which are analogous to the lifting functors of $\CR_{p,e}^n$ in \cite[Definition 4.2]{LL}.  
\begin{definition}\label{n-liftable}
Fix a prime number $p$ and a positive integer $e$. We say that the category $\CC_{p,e}$ is \emph {$n$-$\H$-liftable}  if there is a  functor $\L^{\H}:\H_{p,e}^n \rightarrow \CC_{p,e}$ which satisfies the following:
 \begin{itemize}
 \item
 $(\H_n\circ \L^{\H})(\ov{R})\cong \ov{R}$ for each $\overline{R}$ in $\ob(\CR_{p,e}^n)$.
  \item
 $\Pr_1 \circ\L^{\H}$ is equivalent to $\res_{p,e}^n$.
 \item
 $\L^{\H}\circ \H_n$ is equivalent to  $\id_{\CC_{p,e}}$.
\end{itemize}
 We say that $\L^{\H}$ is an \emph{$n$-th $\H$-lifting functor} of $\CC_{p,e}$.
\end{definition}

Now we see relationships between the categories $\CC_{p,e}$, $\CR_{p,e}^n$, $\H_{p,e}^n$, and $\wH_{p,e}^n$. Define a functor $\wU: \CR_{p,e}^n\rightarrow \wH_{p,e}^n$ as follows:
\begin{itemize}
	\item For $\overline{R}\in \ob(\CR_{p,e}^n)$, $\wU(\overline{R}):=(\U\circ T_n\circ \L)(\overline{R} )$ after rescaling $\theta_{(\U\circ T_n\circ \L)(\overline{R} )}=1/e$, where $\L$ is the $n$-th lifting in Fact \ref{fact:lifting_DVR_valuedfield}; and
	\item For $f\in \mor_{\CR_{p,e}^n}(\overline{R}_1,\overline{R}_2)$, $\wU(f):=(\U\circ \T_n)(f)$,
\end{itemize}
and define a functor $\wTo:\wH_{p,e}^n\rightarrow \CR_{p,e}^n$ as follows:
\begin{itemize}
	\item For $H\in \ob(\wH_{p,e}^n)$, $\wTo(H):=\overline{R}$ where $\overline{R}$ is a truncated DVR in $\Tr(H)$.
	\item For $g\in \mor_{\wH_{p,e}^n}(H_1,H_2)$, $\wTo(g)$ is a morphism in $ \mor_{\CR_{p,e}^n}(\wTo(H_1),\wTo(H_1))$ such that $\Tr(f)=(1,\wTo(g),\eta)$.
\end{itemize}
By Remark \ref{rem:triple_to_vhf} and Proposition \ref{prop:DVR_enough}, we have the following result.
\begin{theorem}\label{thm:equiv_DVR_hvf}
Let $p$ be a prime number and $e$ be a positive integer. Fix $n>e$.
\begin{enumerate}
	\item $\wU\circ \wTo$ is equivalent to $\id_{\wH_{p,e}^n}$.
	\item $\wTo\circ \wU$ is equivalent to $\id_{\CR_{p,e}^n}$
\end{enumerate}
Therefore, two categories $\wH_{p,e}^n$ and $\CR_{p,e}^n$ are equivalent.
\end{theorem}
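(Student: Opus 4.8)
The plan is to verify the two natural-isomorphism statements (1) and (2) separately, and then invoke the standard fact that a pair of functors satisfying both conditions constitutes an equivalence of categories. Throughout, the essential inputs are Tolliver's correspondence between discrete valued hyperfields and triples (Remark \ref{rem:triple_to_vhf} and Fact \ref{fact:vhf_to_triple}), the classification of triples over a fixed truncated DVR (Proposition \ref{prop:DVR_enough}(2)), and the existence/uniqueness of the lifting functor $\L : \CR_{p,e}^n \to \CC_{p,e}$ (Fact \ref{fact:lifting_DVR_valuedfield}), valid since $n > e = l_{p,e}$ when $p \nmid e$, and more generally needing $n > l_{p,e}$ — so I should either assume $n > l_{p,e}$ here or note that the relevant statements only use that $p$ is nonzero and $\nu_R(p) = e$ in $R_n$, which holds for $n > e$. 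First I would unwind the definitions of $\wU$ and $\wTo$: given $H \in \ob(\wH_{p,e}^n)$, $\wTo(H)$ is the truncated DVR $R$ appearing in $\Tr(H) = (R, M, \epsilon)$, which by Fact \ref{fact:vhf_to_triple}(2) has length $l(H) = n$ and perfect residue field $k(H)$ of characteristic $p$; since $H$ is of mixed characteristic with ramification index $e$, we get $p \in \fm_R^e \setminus \fm_R^{e+1}$, so indeed $\wTo(H) \in \ob(\CR_{p,e}^n)$.

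For part (1): starting from $H$, form $\wTo(H) = R$, then $\wU(R) = (\U \circ T_n \circ \L)(R)$ after rescaling $\theta = 1/e$. The key observation is that $\Tr(H) = (R, M, \epsilon)$ is a triple over the truncated DVR $R$, and $T_n(\L(R))$ is \emph{also} a triple over $R$ because $(\Pr_n \circ \L)(R) \cong R$ by Fact \ref{fact:lifting_DVR_valuedfield}(1); by Proposition \ref{prop:DVR_enough}(2) any two triples over $R$ are isomorphic, so $\Tr(H) \cong T_n(\L(R))$ as triples. Applying $\U$ and using the second part of Remark \ref{rem:triple_to_vhf} — that $\U(\Tr(H)) \cong H$ by an isometric isomorphism after rescaling $\theta_{\nu_T} = \theta_H = 1/e$ (the last equality because $H$ is normalized) — yields $\wU(\wTo(H)) \cong H$ in $\wH_{p,e}^n$. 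To promote this to a natural isomorphism I would check naturality on morphisms: for $g \in \Iso(H_1,H_2)$, $\wTo(g)$ is defined via $\Tr(g) = (1, \wTo(g), \eta)$, and $\wU(\wTo(g)) = \U(T_n(\wTo(g)))$; compatibility of the component isomorphisms with $g$ follows from the functoriality of $\Tr$ and $\U$ (Remark \ref{rem:triple_to_vhf}, Fact \ref{fact:vhf_to_triple}) together with the functoriality of $T_n$ and $\L$ (Proposition \ref{prop:DVR_enough}(3), Fact \ref{fact:lifting_DVR_valuedfield}), so the chosen isomorphisms assemble into a natural transformation, necessarily invertible.

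For part (2): starting from $\overline R \in \ob(\CR_{p,e}^n)$, we have $\wU(\overline R) = \U(T_n(\L(\overline R)))$, and then $\wTo(\wU(\overline R))$ is the truncated DVR in $\Tr(\U(T_n(\L(\overline R))))$. By Remark \ref{rem:triple_to_vhf}, $\Tr \circ \U$ is (naturally) the identity on triples, so this truncated DVR is the one in $T_n(\L(\overline R))$, which by Example \ref{ex:vhf_of_valuedfield}/Proposition \ref{prop:DVR_enough}(1) is $R_n(\L(\overline R)) = (\Pr_n \circ \L)(\overline R) \cong \overline R$ by Fact \ref{fact:lifting_DVR_valuedfield}(1). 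Naturality on morphisms again reduces to chasing the functorialities of $\U$, $\Tr$, $T_n$, $\L$ and the defining relation $\Tr(\wTo(g)\text{-lift}) = (1,\wTo(g),\eta)$. Finally, since $\wU \circ \wTo \cong \id_{\wH_{p,e}^n}$ and $\wTo \circ \wU \cong \id_{\CR_{p,e}^n}$, the functors $\wU$ and $\wTo$ are mutually quasi-inverse, establishing the equivalence.

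I expect the main obstacle to be the \emph{naturality} bookkeeping rather than the object-level isomorphisms: one must check that the component isomorphisms $H \xrightarrow{\sim} \wU(\wTo(H))$ can be chosen coherently, which amounts to verifying that the rescaling $\theta = 1/e$ is performed consistently and that the isomorphism of triples $\Tr(H) \cong T_n(\L(\wTo(H)))$ supplied by Proposition \ref{prop:DVR_enough}(2) — which a priori involves a choice of generator of the rank-one module — is compatible with morphisms. Here the fact that $M_n(\cdot)$ is free of rank one and that $T_n(f)$ is independent of the choices of uniformizers (Proposition \ref{prop:DVR_enough}(3)) is what makes the choices disappear up to canonical isomorphism. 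A secondary point requiring care is matching the \emph{morphism} conventions: the isometric homomorphisms in $\wH_{p,e}^n$ correspond under $\Tr$ to triple morphisms with ramification index $r = 1$ (since lengths and ramification indices agree), which is exactly the ramification data carried by morphisms of $\CR_{p,e}^n$, so no spurious $r \neq 1$ morphisms appear on either side.
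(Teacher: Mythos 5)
Your plan matches the paper's approach exactly: the paper's proof is a single sentence citing Remark~\ref{rem:triple_to_vhf} and Proposition~\ref{prop:DVR_enough}, and you have usefully fleshed out the details left implicit there (uniqueness of the triple over a fixed truncated DVR, $\U\circ\Tr\cong\id$, functoriality of $T_n$, and the naturality/bookkeeping of the rescalings). Your observation that $\wU$ as literally defined in the text invokes the lifting functor $\L$, and hence would need $n>l_{p,e}$ rather than merely $n>e$, is a fair criticism of the theorem as stated, though the conclusion is salvageable by noting that any lift $K$ of $\ov{R}$ yields a triple unique up to isomorphism by Proposition~\ref{prop:DVR_enough}(2), so $\wU$ can be defined without appealing to the canonical $\L$.
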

\begin{remark}\cite[Proposition 4.9]{LL}\label{prop:n_atleast_e}
Let $R_1/\W(k)$ and $R_2/\W(k)$ be totally ramified extensions of degree $e$. Then $R_{1,e}$ is isomorphic to $R_{2,e}$ as $\W(k)$-algebras. So, the assumption that $n>e$ is natural in Theorem \ref{thm:equiv_DVR_hvf}.
\end{remark}

\noindent Each $R\in \ob(\CC_{p,e})$ gives a discrete valued hyperfield $\H(R):=\H_n(K)$ in $\ob(\H_{p,e}^n)$ where $K$ is the fraction field of $R$ after rescaling $\theta_{\H_n(R)}=1/e$. Also each $f\in \mor_{\CC_{p,e}}(R_1,R_2)$ induces a morphism $\H(f):\H(R_1)\rightarrow \H(R_2)$. So we have a functor $\H:\CC_{p,e}\rightarrow \H_{p,e}^n$. For $n>l_{p,e}$, we define a functor $\L^{\H}:\H_{p,e}^n\rightarrow \CC_{p,e}$.  Since $\H_{p,e}^n$ is a subcategory of $\wH_{p,e}^n$, we have a functor $\To:\H_{p,e}^n\rightarrow \CR_{p,e}^n$ by restricting $\wTo$ to $\H_{p,e}^n$. For each $H\in \ob(\H_{p,e}^n)$, there is a unique( up to isomorphic) complete valued field $K$ such that $H\cong \H_n(K)$, where $K$ is the fraction field of $(\L\circ\To)(H)$ by Corollary \ref{cor:criteria_isom}. For $H_1,H_2\in \ob(\H_{p,e}^n)$ and $f\in \mor_{\H_{p,e}^n}(H_1,H_2)$, we have a morphism $\L^{\H}(f)\in \mor_{\CC_{p,e}}(R_1,R_2)$ where $R_1=(\L\circ \To)(H_1)$ and $R_2=(\L\circ \To)(H_2)$ by Theorem \ref{thm:mainhomlifting}. By Proposition \ref{prop:funtoriality}, $\L^{\H}$ forms a functor. By Remark \ref{rem:vf_rigid_hom}, Theorem \ref{thm:mainhomlifting}, and Proposition \ref{prop:funtoriality}, we have the following result which is analogous to Fact \ref{fact:lifting_DVR_valuedfield} in the case of valued hyperfields.
\begin{theorem}\label{thm:lifting_vhf_valuedfield}
Let $p$ be a prime number and $e$ be a positive integer. Fix $n>l_{p,e}$. Then $\CH_{p,e}^n$ is $n$-$\H$-liftable and there is an $n$-th $\H$-lifting functor $\L^{\H}$ of $\CH_{p,e}^n$ satisfying the following:
 \begin{enumerate}
 \item
 There is an isometric isomorphism, which is over $p$, between $(\H\circ \L^{\H})(H)$ and $H$ for each $H$ in $\ob(\CH_{p,e}^n)$.
 \item $\Pr_1\circ\L^{\H}$ is equivalent to $\res_{p,e}^n$.
 \item
 $\L^{\H}\circ \H$ is equivalent to  $\id_{\CC_{p,e}}$.
\end{enumerate}
Moreover, there is a unique $n$-th lifting functor $\L^{\H}$ satisfying
\begin{enumerate}
\item[(4)] For each $g\in \mor_{\H_{p,e}^n}(H_1,H_2)$ and for any $x\in \L^{\H}(H_1)$, there is a representative $b_x$ of $(\H_n\circ \L)(g)([x])$ such that $$\nu_2(\L^{\H}(g)(x)-b_x)>M(\L^{\H}(H_1)).$$
\end{enumerate}
If we take $\L$ satisfying (1)-(4) and $\L^{\H}$ satisfying (1)-(4), then $\L^{\H}=\L\circ \To$.
\end{theorem}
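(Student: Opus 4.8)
The plan is to realise $\L^{\H}$ as the composite $\L\circ\To$, where $\To:\H_{p,e}^n\to\CR_{p,e}^n$ is the restriction of $\wTo$ and $\L:\CR_{p,e}^n\to\CC_{p,e}$ is the unique $n$-th lifting functor of Fact \ref{fact:lifting_DVR_valuedfield} satisfying (1)--(4); being a composite of functors, $\L^{\H}$ is again a functor, and the identification of its morphism rule with $g\mapsto\L_{n,n}^{\H}(g)$ of Theorem \ref{thm:mainhomlifting} (when the latter is available) will fall out of the uniqueness argument. On objects, $\L^{\H}(H)$ is obtained by passing to the triple $\Tr(H)=(R_l(H),M,\epsilon)$, discarding the module to reach $\To(H)=R_l(H)$, which lies in $\ob(\CR_{p,e}^n)$ because $l(H)=n$, $k(H)$ is perfect, and $\nu_{R_l(H)}(p)=e$, and then applying $\L$.

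The first and delicate step is (1). Put $R=\L^{\H}(H)$ and $K=\mathrm{Frac}(R)$. By Example \ref{ex:vhf_of_valuedfield}, $\Tr(\H_n(K))\cong(R/\fm_R^n,\fm_R/\fm_R^{n+1},\epsilon)$; by Fact \ref{fact:lifting_DVR_valuedfield}(1) the underlying ring $R/\fm_R^n$ is isomorphic to $\To(H)$, and by Proposition \ref{prop:DVR_enough}(2) every triple over that ring is isomorphic to the canonical one, so $\Tr(\H_n(K))\cong\Tr(H)$; applying $\U$ and Remark \ref{rem:triple_to_vhf} then produces an isometric isomorphism from $(\H\circ\L^{\H})(H)=\H_n(K)$ to $H$. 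The content that actually requires work is that this isomorphism can be taken \emph{over $p$}: Example \ref{ex:Hom_and_HomoverZ} shows that an isometric isomorphism with ramification index one (equivalently, a triple isomorphism with $r=1$) need not be over $p$, so one must use the extra rigidity supplied by $\L$. After replacing $\L$ by a naturally isomorphic functor so that $\Tr(H)=\Tr(\H_n(K))$ on the nose, the isomorphism in question becomes the canonical identification $\U(\Tr(H))\cong H$ of Remark \ref{rem:triple_to_vhf}, which carries the distinguished element $a_H\Pi_H^{\otimes e}$, where $p=a_H\pi_H^e$ in $R_l(H)$, to $[p]_n$; that is, it is over $p$. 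Naturality of this family in $H$ is inherited from the functoriality of $\U$ and $\Tr$; concretely it is the restriction to $\H_{p,e}^n$ of the natural isomorphism $\wU\circ\wTo\cong\id$ of Theorem \ref{thm:equiv_DVR_hvf}(1), the one additional point being that its components are over $p$.

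Conditions (2)--(4) are then formal. One has $\To\circ\H\cong\Pr_n$ as functors $\CC_{p,e}\to\CR_{p,e}^n$, because by Example \ref{ex:vhf_of_valuedfield} the object $\To(\H_n(\mathrm{Frac}(R)))$ — the truncated valuation ring inside $\Tr(\H_n(\mathrm{Frac}(R)))$ — equals $R/\fm_R^n=\Pr_n(R)$; hence $\L^{\H}\circ\H=\L\circ(\To\circ\H)\cong\L\circ\Pr_n\cong\id_{\CC_{p,e}}$ by Fact \ref{fact:lifting_DVR_valuedfield}(3), which is (3). Similarly $\Pr_1\circ\L^{\H}=(\Pr_1\circ\L)\circ\To\cong\Pr^n_1\circ\To\cong\res_{p,e}^n$ by Fact \ref{fact:lifting_DVR_valuedfield}(2) and the functorial identification (Remark/Definition \ref{rem/def:residuefield_vhf}) of $k(H)$ with the residue field of $\To(H)$, which is (2). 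For (4), given $g\in\mor_{\H_{p,e}^n}(H_1,H_2)$ and $x\in\L^{\H}(H_1)$ we have $\L^{\H}(g)=\L(\To(g))$ and $M(\L^{\H}(H_1))=M(\L(\To(H_1)))$, so Fact \ref{fact:lifting_DVR_valuedfield}(4) applied to $\L$ and the truncated-DVR morphism $\To(g)$ supplies a representative $b_x$ of $(\Pr_n\circ\L)(\To(g))(x+\fm_1^n)$ with $\nu_2(\L^{\H}(g)(x)-b_x)>M(\L^{\H}(H_1))$; rewriting the coset $x+\fm_1^n$ as the class $[x]_n\in\H_n$ via Example \ref{ex:vhf_of_valuedfield} and Fact \ref{fact:vhf_to_triple} turns this into (4).

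Finally, uniqueness. Suppose $\L'$ is another $n$-th $\H$-lifting functor satisfying (4). By (1), each $\L'(H)$ realises $H$ as its $n$-th valued hyperfield over $p$, so $\L'(H)$ is isomorphic to $\L^{\H}(H)$, naturally in $H$, by Corollary \ref{cor:vhf_unique_realizable} (whose hypothesis $l(H)=n>e(1+\nu(e))=l_{p,e}$ holds); and on a morphism $g$ both $\L'(g)$ and $\L^{\H}(g)$ are $(n,n)$-liftings of the isometric isomorphism over $p$ induced by $g$, hence coincide by the uniqueness of $(n,n)$-liftings (Proposition \ref{prop:lifting_independent_uniformizer} together with Theorem \ref{thm:mainhomlifting}). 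Thus $\L'=\L^{\H}=\L\circ\To$, and Proposition \ref{prop:funtoriality} then identifies the morphism rule with $g\mapsto\L_{n,n}^{\H}(g)$. The genuine obstacle throughout is the ``over $p$'' clause in (1); everything else is transport of structure along the equivalence $\wH_{p,e}^n\simeq\CR_{p,e}^n$ of Theorem \ref{thm:equiv_DVR_hvf} together with the properties of $\L$ from Fact \ref{fact:lifting_DVR_valuedfield}.
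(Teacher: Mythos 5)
Your proposal is correct in substance, but it is a genuinely different route from what the paper does. The paper defines $\L^{\H}$ on morphisms directly via Theorem~\ref{thm:mainhomlifting} (the Krasner-lemma lifting of hyperfield homomorphisms over $p$), obtains functoriality from Proposition~\ref{prop:funtoriality}, and records conditions (1)--(4) and the identity $\L^{\H}=\L\circ\To$ as consequences. You instead take $\L^{\H}:=\L\circ\To$ as the definition, so functoriality is free and (2), (3) are formal, and the work shifts to verifying (1) and (4) and uniqueness. This is cleaner conceptually and makes explicit how everything is transported through the equivalence $\wH_{p,e}^n\simeq\CR_{p,e}^n$ of Theorem~\ref{thm:equiv_DVR_hvf}. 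Your treatment of the ``over $p$'' clause in (1) is a real improvement: you correctly isolate (via Example~\ref{ex:Hom_and_HomoverZ}) that this is the non-formal point, and the observation that any triple isomorphism of ramification index $1$ carries $a_1\Pi_1^{\otimes e}$ to $a_2\Pi_2^{\otimes e}$ (since $f(p)=p$ forces $f(a_1)(f(\pi_1)/\pi_2)^e=a_2$) settles it.

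The one place you move too fast is (4). Fact~\ref{fact:lifting_DVR_valuedfield}(4) produces a representative of the \emph{additive} coset $\L(\To(g))(x)+\fm_2^n$, whereas (4) of the theorem asks for a representative of the \emph{multiplicative} coset $g([x])\in\H_n$, i.e.\ an element of $\pi_2'(1+\fm_2^n)=\pi_2'+\fm_2^{n+1}$ at a uniformizer $\pi_2'$. These cosets do not coincide when $\nu_2>0$, so ``rewriting the coset $x+\fm_1^n$ as $[x]_n$'' is not literally a rewriting. The gap is, however, easily closed: by Proposition~\ref{prop:lifting_independent_uniformizer} it suffices to check (4) at a uniformizer $\pi_1$, and there the additive representative $b_{\pi_1}=\pi_2'+c$ with $c\in\fm_2^n$ differs from $\pi_2'$ by $\nu_2(c)\ge n/e>M(K_1)$ (using $n>M(K_1)e^2$, which $n>l_{p,e}$ guarantees), so $b:=\pi_2'$ itself is a legitimate representative of $g([\pi_1])$ with $\nu_2(\L^{\H}(g)(\pi_1)-b)>M(K_1)$. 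With that short calculation inserted, your verification that $\L\circ\To$ is the $(n,n)$-lifting in the sense of Definition~\ref{def:lifting homomorphim} is complete, and the uniqueness argument you give (Corollary~\ref{cor:vhf_unique_realizable} on objects, uniqueness of $(n,n)$-liftings on morphisms) then yields both the uniqueness claim and the identification $\L^{\H}=\L\circ\To$ exactly as the theorem asserts.
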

\noindent Moreover, by Theorem \ref{thm:tame_vf_vhf}, we have the following theorem.
\begin{theorem}\label{thm:tame_equiv_vhf_valuedfields}
Let $p$ be a prime number and $e$ be a positive integer. Suppose $p$ does not divide $e$ so that $l_{p,e}=e$. For any $n>e$, two categories $\H_{p,e}^n$ and $\CC_{p,e}$ are equivalent
\end{theorem}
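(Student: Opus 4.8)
The plan is to prove that the functor $\H\colon\CC_{p,e}\rightarrow\H_{p,e}^n$ constructed above is an equivalence of categories, which I would do by checking that $\H$ is both fully faithful and essentially surjective; a quasi-inverse will then be the $\H$-lifting functor $\L^{\H}$ of Theorem \ref{thm:lifting_vhf_valuedfield}, which is available precisely because $n>e=l_{p,e}$ (the equality uses $p\nmid e$). Two preliminary observations: since $p\nmid e$, the fraction field of every $R\in\ob(\CC_{p,e})$ is tamely ramified, so Theorem \ref{thm:tame_vf_vhf} applies to all objects of $\CC_{p,e}$; and rescaling the valuation to $\theta_H=1/e$ affects neither the notion ``isometric'' nor the notion ``over $p$'', so the cited results, stated for normalized valuations with $\nu(p)=1$, transfer without change to the hyperfields in $\H_{p,e}^n$.

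For essential surjectivity, take $H\in\ob(\H_{p,e}^n)$. Then $\L^{\H}(H)\in\ob(\CC_{p,e})$, and Theorem \ref{thm:lifting_vhf_valuedfield}(1) supplies an isometric isomorphism over $p$ between $(\H\circ\L^{\H})(H)$ and $H$. Since $n>e$, the morphisms of $\H_{p,e}^n$ are exactly the over-$p$ isometric homomorphisms ($\Iso_{\BZ}$), so this isomorphism is a morphism of $\H_{p,e}^n$; hence $H\cong\H(\L^{\H}(H))$, and $H$ is isomorphic to an object in the image of $\H$.

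For full faithfulness, fix $R_1,R_2\in\ob(\CC_{p,e})$ with fraction fields $K_1,K_2$, so that $\mor_{\CC_{p,e}}(R_1,R_2)=\Hom(R_1,R_2)\cong\Hom(K_1,K_2)$, while, as $n>e$, $\mor_{\H_{p,e}^n}(\H(R_1),\H(R_2))=\Iso_{\BZ}(\H_n(K_1),\H_n(K_2))\subseteq\Hom_{\rg}(\H_n(K_1),\H_n(K_2))$. On morphisms, $\H$ sends $\sigma$ to the induced map $[a]_n\mapsto[\sigma(a)]_n$. By Theorem \ref{thm:tame_vf_vhf}, $\sigma\mapsto\H(\sigma)$ is a bijection from $\Hom(K_1,K_2)$ onto $\Hom_{\rg}(\H_n(K_1),\H_n(K_2))$. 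Since $\H$ is already a functor into $\H_{p,e}^n$, the image of $\Hom(K_1,K_2)$ under $\H$ lies in $\Iso_{\BZ}(\H_n(K_1),\H_n(K_2))$; together with the inclusion above this forces $\Iso_{\BZ}(\H_n(K_1),\H_n(K_2))=\Hom_{\rg}(\H_n(K_1),\H_n(K_2))$, so that $\H$ restricts to a bijection between $\mor_{\CC_{p,e}}(R_1,R_2)$ and $\mor_{\H_{p,e}^n}(\H(R_1),\H(R_2))$. Thus $\H$ is fully faithful.

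A fully faithful and essentially surjective functor is an equivalence of categories, so $\H$ exhibits $\CC_{p,e}\simeq\H_{p,e}^n$, with $\L^{\H}$ as a quasi-inverse (note $\L^{\H}\circ\H\cong\id_{\CC_{p,e}}$ by Theorem \ref{thm:lifting_vhf_valuedfield}(3)). The heart of the matter, and the only place tameness is genuinely used, is the full faithfulness of $\H$: it rests squarely on the rigidity of Theorem \ref{thm:tame_vf_vhf}, namely that an over-$p$ homomorphism between $n$-th valued hyperfields of tamely ramified complete discrete valued fields comes from a unique isometric embedding of the fields. Without tameness one only obtains liftings once $n$ is large compared to the Krasner invariant $M(K_1)$ (Theorem \ref{thm:mainhomlifting}), which is too weak to identify the morphism sets for every $n>e$; correspondingly, in the wildly ramified case one expects only the weaker equivalence $\wH_{p,e}^n\simeq\CR_{p,e}^n$ of Theorem \ref{thm:equiv_DVR_hvf}.
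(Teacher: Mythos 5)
Your proof is correct and takes the route the paper intends: the paper's own justification is just ``by Theorem \ref{thm:tame_vf_vhf}'', and your argument supplies the fully-faithful-and-essentially-surjective verification that cashes this out, using $\L^{\H}$ from Theorem \ref{thm:lifting_vhf_valuedfield} as the quasi-inverse (available since $n>e=l_{p,e}$). Along the way your full-faithfulness step extracts a pleasant fact, implicit but never stated in the paper, that in the tame case the bijectivity in Theorem \ref{thm:tame_vf_vhf} combined with Remark \ref{rem:vf_rigid_hom} forces $\Iso_{\BZ}(\H_n(K_1),\H_n(K_2))=\Hom_{\rg}(\H_n(K_1),\H_n(K_2))$, i.e.\ every over-$p$ homomorphism between the $n$-th valued hyperfields of tamely ramified complete discrete valued fields in $\CC_{p,e}$ is automatically isometric.
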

\noindent In summary, for $n> l_{p,e}$, we have the following diagram between categories of $\CC_{p,e}$, $\CR_{p,e}^n$, $\H_{p,e}^n$, and $\wH_{p,e}^n$:
$$
\begin{tikzcd}
\H_{p,e}^n \arrow[d, hook] \arrow[dr, "\To" description] \arrow[r, shift left, "\L^{\H}"] \arrow[r, shift right, leftarrow, "\H"'] & \CC_{p,e} \arrow[d, shift left, "\Pr_n"] \arrow[d, shift right, leftarrow, "\L"'] \\
\wH_{p,e}^n \arrow[r, shift left, leftarrow, "\wU"] \arrow[r, shift right, "\wTo"']
& \CR_{p,e}^n
\end{tikzcd}
$$
, and we can take $\L^{\H}=\L\circ \To$.

\begin{question}\label{question:equiv_vhf_valuedfields}
Let $p$ be a prime number and $e$ be a positive integer. Fix $n>l_{p,e}$. We know that two categories $\wH_{p,e}^n$ and $\CR_{p,e}^n$ are equivalent, and $\L^{\H}\circ \H$ is equivalent to $\id_{\CC_{p,e}}$. And for the tame case, that is, $p\not|e$, $\H_{p,e}^n$ and $\CC_{p,e}$ are equivalent.  Is $\H_n\circ \L^{\H}$ equivalent to $\id_{\H_{p,e}^n}$ if $p|e$?
\end{question}

\section{Relative completeness via hyper fields}\label{section:AKE_hyperfields}
In this section, we aim to prove an AKE-type relative completeness theorem in terms of valued hyperfields for finitely ramified valued fields. We first recall basic facts on coarsenings of valuations.

\begin{rem/def}\label{rem/def:coarse_valuation}\cite{PR}
Let $(K,\nu, k, \Gamma)$ be valued field. Let $\Gamma^{\circ}$ be a convex subgroup of $\Gamma$ and $\dot{\nu}:K\setminus\{0\}\longrightarrow \Gamma/\Gamma^{\circ}$ be a map sending $x(\neq 0)\in K$ to $\nu(x)+\Gamma^{\circ}\in \Gamma/\Gamma^{\circ}$. The map $\dot{\nu}$ is a valuation, called {\em a coarse valuation} of $\nu$ with respect to $\Gamma^{\circ}$. The residue field $\cfd$, called {\em the core field} of $(K,\nu)$ with respect to $\Gamma^{\circ}$, of $(K,\dot{\nu})$ forms a valued field equipped with a valuation $\cnu$ induced from $\nu$ and the value groups $\Gamma^{\circ}$. More precisely, the valuation $\cnu$ is defined as follows: Let $\pr_{\dnu} : R_{\dnu}\longrightarrow \cfd$ be the canonical projection map and let $x\in R_{\dnu}$. If $x^{\circ}:=\pr_{\dnu}(x)\in \cfd\setminus\{0\}$, then $\cnu(x^{\circ}):=\nu(x)$. And $x^{\circ}=0\in \cfd$ if and only if $\nu(x)>\Gamma^{\circ}$, that is, $\nu(x)>\gamma$ for all $\gamma\in \Gamma^{\circ}$. If $K$ is of characteristic $0$ and $\Gamma^{\circ}$ is non-trivial, then $(K,\dnu)$ is always of equal characteristic $(0,0)$.
\end{rem/def}

\begin{fact}\label{fact:coarse_valuation}
Let $(K,\nu,\Gamma)$ be valued field. Let $\dnu$ be the coarse valuation and $K^{\circ}$ be the core field with respect to $\Gamma^{\circ}$ for a non-trivial convex subgroup $\Gamma^{\circ}$ of $\Gamma$.
\begin{enumerate}
	\item Let $R_{\nu}$, $R_{\dnu}$, and $R_{\cnu}$ be the valuation rings of $(K,\nu)$, $(K,\dnu)$, and $(\cfd,\cnu)$ respectively. Then $(\pr_{\dnu})^{-1}(R_{\cnu})=R_{\nu}$.
	\item If $(K,\nu)$ is finitely ramified, then $(\cfd,\cnu)$ is finitely ramified, and $K$ and $\cfd$ have the same ramification index.
	\item If $(K,\nu)$ is finitely ramified and $\aleph_1$-saturated, and $\Gamma^{\circ}$ is the smallest non-trivial convex subgroup, then $(\cfd,\cnu)$ is complete.
\end{enumerate}
\end{fact}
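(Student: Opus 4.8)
The plan is to handle the three parts in order, as they are of increasing depth: (1) is bookkeeping with the definition of $\cnu$, (2) follows from convexity of $\Gamma^{\circ}$, and (3) is where $\aleph_{1}$-saturation is used to produce limits of Cauchy sequences. For (1), note first that $0\in\Gamma^{\circ}$ gives $R_{\nu}\subseteq R_{\dnu}$, so the claimed equality is between subsets of $R_{\dnu}$. Take $x\in R_{\dnu}$ and put $x^{\circ}=\pr_{\dnu}(x)$. By the definition of $\cnu$ in Remark/Definition \ref{rem/def:coarse_valuation}, $x^{\circ}\in R_{\cnu}$ means either $x^{\circ}=0$, i.e. $\nu(x)>\Gamma^{\circ}$, or $\cnu(x^{\circ})=\nu(x)\ge 0$; both alternatives force $\nu(x)\ge 0$, and conversely $\nu(x)\ge 0$ falls into the second. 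Hence $\pr_{\dnu}^{-1}(R_{\cnu})=\{x\in K:\nu(x)\ge 0\}=R_{\nu}$. The identical argument with $R_{\cnu}$ replaced by its maximal ideal yields $\pr_{\dnu}^{-1}(\fm_{\cnu})=\fm_{\nu}$, which will be reused.

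For (2), by the last sentence of Remark/Definition \ref{rem/def:coarse_valuation} the coarsened field $(K,\dnu)$ has equal characteristic $(0,0)$, so $p$ is a unit of $R_{\dnu}$; thus $\nu(p)\in\Gamma^{\circ}$ and $\cnu(p^{\circ})=\nu(p)$ for $p^{\circ}=\pr_{\dnu}(p)\ne 0$, so $(\cfd,\cnu)$ is of mixed characteristic $(0,p)$ (its residue field is $R_{\nu}/\fm_{\nu}$ by (1), of characteristic $p$). The value group of $\cnu$ is $\Gamma^{\circ}$, and since $0$ and $\nu(p)$ both lie in the convex subgroup $\Gamma^{\circ}$, convexity gives $\{\gamma\in\Gamma^{\circ}:0<\gamma\le\nu(p)\}=\{\gamma\in\Gamma:0<\gamma\le\nu(p)\}$; the latter is finite, so $(\cfd,\cnu)$ is finitely ramified with ramification index $e$.

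For (3), I first pin down $\Gamma^{\circ}$. Finite ramification forces $\Gamma$ to have a least positive element $\gamma_{1}$, namely the minimum of the finite set $\{\gamma:0<\gamma\le\nu(p)\}$ (any $0<\delta<\gamma_{1}$ would lie in that set). Every non-trivial convex subgroup of $\Gamma$ contains $\gamma_{1}$ (it contains some $\delta\ne 0$, hence $|\delta|\ge\gamma_{1}>0$, hence $\gamma_{1}$ by convexity); in particular $\gamma_{1}$ is the least positive element of $\Gamma^{\circ}$. Being the smallest non-trivial convex subgroup, $\Gamma^{\circ}$ has no proper non-trivial convex subgroup, so it is archimedean, and an archimedean ordered abelian group with a least positive element is $\cong\BZ$ by H\"{o}lder's theorem; thus $\Gamma^{\circ}=\BZ\gamma_{1}$ and $(\cfd,\cnu)$ is a discrete valued field. (Also $\Gamma$ is infinite and $\aleph_{1}$-saturated, hence uncountable, so $\Gamma\ne\BZ$ and the coarsening is proper.) Fix a uniformizer $\pi$ of $\nu$ with $\nu(\pi)=\gamma_{1}$; then $\fm_{\nu}^{n}=\{x:\nu(x)\ge n\gamma_{1}\}$, $\fm_{\dnu}=\bigcap_{n}\fm_{\nu}^{n}$ (since $\Gamma^{\circ}=\BZ\gamma_{1}$ makes $\nu(x)>\Gamma^{\circ}$ equivalent to $\nu(x)\ge n\gamma_{1}$ for all $n$), and $\fm_{\dnu}\subseteq\fm_{\nu}^{n}$ for every $n$. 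Using (1) and $\pr_{\dnu}^{-1}(\fm_{\cnu})=\fm_{\nu}$, the map $\pr_{\dnu}$ induces ring isomorphisms $R_{\cnu}\cong R_{\nu}/\fm_{\dnu}$ and $R_{\cnu}/\fm_{\cnu}^{n}\cong R_{\nu}/\fm_{\nu}^{n}$ for all $n$, and $\bigcap_{n}\fm_{\cnu}^{n}=0$. Hence $(\cfd,\cnu)$ is complete if and only if the canonical map $R_{\cnu}\to\lim_{\longleftarrow}R_{\nu}/\fm_{\nu}^{n}$ is surjective. Given a coherent sequence $(x_{n}+\fm_{\nu}^{n})_{n}$ with $x_{n}\in R_{\nu}$, consider the $1$-type in a variable $v$, over the countable parameter set $\{\pi\}\cup\{x_{n}:n\ge 1\}$, consisting of the $\CL_{K}$-formulas ``$1\mid v$'' and ``$\pi^{n}\mid(v-x_{n})$'' for all $n\ge 1$. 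It is finitely satisfiable: a finite subset involving $n_{1}<\cdots<n_{k}$ is realized by $v=x_{n_{k}}$, since coherence gives $x_{n_{k}}\equiv x_{n_{i}}\pmod{\fm_{\nu}^{n_{i}}}$ for each $i$ and $x_{n_{k}}\in R_{\nu}$. By $\aleph_{1}$-saturation it is realized by some $x\in R_{\nu}$, and then $\pr_{\dnu}(x)\in R_{\cnu}$ maps onto the given coherent sequence; this proves (3).

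The step I expect to be most delicate is the bookkeeping in (3): establishing $\Gamma^{\circ}\cong\BZ$, and above all verifying that $\pr_{\dnu}$ identifies the finite truncations $R_{\cnu}/\fm_{\cnu}^{n}$ with $R_{\nu}/\fm_{\nu}^{n}$, so that an arbitrary element of $\lim_{\longleftarrow}R_{\cnu}/\fm_{\cnu}^{n}$ really amounts to a coherent sequence from $R_{\nu}$ that can be captured by a single type over countably many parameters. Once that is in place, the appeal to $\aleph_{1}$-saturation is routine.
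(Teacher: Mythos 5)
The paper states this result as a Fact without giving a proof (and, unlike the other Facts in Section~\ref{section:AKE_hyperfields}, without a citation), so there is no in-paper argument to compare against. Your proof is correct: part (1) is a direct unwinding of Remark/Definition~\ref{rem/def:coarse_valuation}; part (2) correctly uses that $p$ becomes a unit for $\dnu$ to place $\nu(p)$ in $\Gamma^{\circ}$ and then convexity to identify the ramification sets; and part (3) correctly identifies $\Gamma^{\circ}$ with $\BZ\gamma_1$, identifies the finite truncations $R_{\cnu}/\fm_{\cnu}^n$ with $R_{\nu}/\fm_{\nu}^n$ via $\pr_{\dnu}$, and then realizes a coherent sequence by a countable $\CL_K$-type using $\aleph_1$-saturation, which is exactly what the hypothesis is there to deliver.
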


\begin{definition}\label{def:restricted_hf}
Let $(K,\nu,\Gamma)$ be valued field. Let $\Gamma^{\circ}\subset \Gamma$ be a non-trivial convex subgroup and $\gamma\in (\Gamma^{\circ})_{>0}$. Let $0_{\Gamma^{\circ}}^{\gamma}:=\{\alpha\in \H_{\gamma}(K)|\ \hn(\alpha)>\Gamma^{\circ} \}$. If $\gamma$ is obvious, we write $0_{\Gamma^{\circ}}^{\gamma}=0_{\Gamma^{\circ}}$. Define $\H_{\gamma}^{\times}(K,\Gamma^{\circ}):=\{\alpha\in\H_{\gamma}(K): \hn(\alpha)\in \Gamma^{\circ}\}$ and $\H_{\gamma}(K,\Gamma^{\circ}):=\H_{\gamma}^{\times}(K,\Gamma^{\circ})\cup \{ 0_{\Gamma^{\circ}}^{{\gamma}} \}$.
\end{definition}

\begin{remark}
Let $\Gamma^{\circ}$ be a non-trivial convex subgroup of $\Gamma$. Then $\H_{\gamma}(K,\Gamma^{\circ})$ forms a valued hyperfield.
\end{remark}
\begin{proof}
Let $(\H_{\gamma}(K),\hp,\hm,\hn)$ be the valued $\gamma$-hyperfield of $K$. First, $\H_{\gamma}^{\times}(K,\Gamma^{\circ})$ is a multiplicative subgroup of $(\H_{\gamma}^{\times}(K),\hm)$ because $\Gamma^{\circ}$ is a subgroup of $\Gamma$. Note that for $\alpha,\beta\in \H_{\gamma}^{\times}(K)$, if there is $x\in \alpha\hp\beta$ such that $\nu(x)>\Gamma^{\circ}$, then $0_{\Gamma^{\circ}}\subset \alpha\hp\beta$($\dagger$). For $\alpha=[a]$ and $\beta=[b]$, $\bigcup \alpha\hp\beta=(a+b)+\fm^{\gamma+\min\{\nu(a),\nu(b)\}}$ by Lemma \ref{lem:basic_on_hyperfield}(2). Suppose $x\in \bigcup \alpha\hp\beta$ such that $\nu(x)>\Gamma^{\circ}$. If $\nu(a+b)<\gamma+\min\{\nu(a),\nu(b)\}$, then $\nu(x)=\nu(a+b)<\gamma+\min\{\nu(a),\nu(b)\}\in\Gamma^{\circ}$ because $\gamma,\nu(a),\nu(b)\in \Gamma^{\circ}$. So we have that $\nu(a+b)\ge \gamma+\min\{\nu(a),\nu(b)\}$ and $\bigcup\alpha\hp\beta=\fm^{\gamma+\min\{\nu(a),\nu(b)}\supset 0_{\Gamma^{\circ}}$. We define a multivalued operation $\hp^{\Gamma^{\circ}}$ on $\H_{\gamma}(K,\Gamma^{\circ})$ as follows: For $\alpha,\beta\in \H_{\gamma}^{\times}(K)$,
\begin{itemize}
	\item $\alpha\hp^{\Gamma^{\circ}}0_{\Gamma^{\circ}}=\alpha=0_{\Gamma^{\circ}}\hp^{\Gamma^{\circ}}\alpha$.
	\item $\alpha\hp^{\Gamma^{\circ}}\beta=\begin{cases}
\alpha\hp\beta& \mbox{if } \forall x\in \bigcup \alpha\hp\beta,\ \nu(x)\in \Gamma^{\circ}\\
(\alpha\hp\beta)\cap(\H_n)^{\times}(K)\cup\{0_{\Gamma^{\circ}}\}& \mbox{if } \exists x\in \bigcup \alpha\hp\beta,\ \nu(x)>\Gamma^{\circ}
	\end{cases}$.
\end{itemize}
By ($\dagger$) and covexity of $\Gamma^{\circ}$, $\hp^{\Gamma^{\circ}}$ is well-defined. Define a map $\hn^{\Gamma^{\circ}}$ on $\H_{\gamma}(K,\Gamma^{\circ})$ as follows: For $\alpha\in \H_{\gamma}(K,\Gamma^{\circ})$, $\hn^{\Gamma^{\circ}}(\alpha)=\hn(\alpha)$ if $\alpha\neq 0_{\Gamma^{\circ}}$ and $\hn^{\Gamma^{\circ}}(\alpha)=\infty$ if $\alpha=0_{\Gamma^{\circ}}$. Then $(\H_{\gamma}(K,\Gamma^{\circ}),\hp^{\Gamma^{\circ}}, \hm, \hn^{\Gamma^{\circ}})$ forms a valued hyperfield.
\end{proof}

\begin{lemma}\label{lem:coarse_valuation}
Let $(K,\nu,\Gamma)$ be valued field. Let $\dnu$ be the coarse valuation and $K^{\circ}$ be the core field with respect to $\Gamma^{\circ}$ for a non-trivial convex subgroup $\Gamma^{\circ}$ of $\Gamma$. For each $\gamma\in \Gamma^{\circ}_{>0}$, $\H_{\gamma}(K,\Gamma^{\circ})$ and $\H_{\gamma}(K^{\circ})$ are isomorphic.
\end{lemma}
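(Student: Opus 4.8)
The plan is to exhibit the obvious candidate map and verify it is an isomorphism of valued hyperfields. For $\gamma\in\Gamma^{\circ}_{>0}$, any $a\in K$ with $\nu(a)\in\Gamma^{\circ}$ is a unit of the coarsened valuation ring $R_{\dnu}$, so I set $\Phi\colon\H_{\gamma}(K,\Gamma^{\circ})\to\H_{\gamma}(\cfd)$ by $\Phi([a]_{\gamma}):=[\pr_{\dnu}(a)]_{\gamma}$ for such $a$, and $\Phi(0_{\Gamma^{\circ}}^{\gamma}):=0$. Three facts will be used throughout: (i) the trichotomy for a convex subgroup, namely every element of $\Gamma$ lies in $\Gamma^{\circ}$, or is $>\Gamma^{\circ}$, or is $<\Gamma^{\circ}$, which together with $\gamma>0$ and $\gamma\in\Gamma^{\circ}$ forces any element of $\Gamma$ exceeding $\gamma$ (indeed exceeding $\gamma+\delta$ for any $\delta\in\Gamma^{\circ}$) to be in $\Gamma^{\circ}$ or $>\Gamma^{\circ}$; (ii) for $x\in R_{\dnu}$ one has $\cnu(\pr_{\dnu}(x))=\nu(x)$ when $\nu(x)\in\Gamma^{\circ}$ and $\pr_{\dnu}(x)=0$ when $\nu(x)>\Gamma^{\circ}$ (Remark/Definition \ref{rem/def:coarse_valuation}); and (iii) $\cnu(\cfd^{\times})=\Gamma^{\circ}$, so lifting along $\pr_{\dnu}$ preserves values.

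First I check $\Phi$ is well defined and bijective. If $[a]_{\gamma}=[b]_{\gamma}$ then $\nu(b/a-1)>\gamma$, so by (i) and (ii) the element $b/a-1$ either reduces to a nonzero element of $\cfd$ of $\cnu$-value $>\gamma$ or reduces to $0$; in both cases $\pr_{\dnu}(b)/\pr_{\dnu}(a)\in 1+\fm_{\cfd}^{\gamma}$ (where $\fm_{\cfd}^{\gamma}=\{y\in\cfd:\cnu(y)>\gamma\}$), so $\Phi$ is well defined; running the same computation backwards gives injectivity. For surjectivity, any $y\in\cfd^{\times}$ lifts by (iii) to a unit of $R_{\dnu}$ of value $\cnu(y)\in\Gamma^{\circ}$, and $0$ is the image of $0_{\Gamma^{\circ}}^{\gamma}$. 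Multiplicativity of $\Phi$, including the clauses involving $0_{\Gamma^{\circ}}^{\gamma}$, and the identity $\hn^{\Gamma^{\circ}}(\alpha)=\hn(\Phi(\alpha))$ for the induced valuations are immediate from $\pr_{\dnu}$ being a ring homomorphism on $R_{\dnu}$ and from (ii), (iii).

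The substantive point is compatibility with the multivalued addition, which I will check by comparing addition balls via Lemma \ref{lem:basic_on_hyperfield}(2). For $\alpha=[a]_{\gamma}$, $\beta=[b]_{\gamma}$ in $\H_{\gamma}^{\times}(K,\Gamma^{\circ})$ put $c=a+b$ and $\delta=\gamma+\min\{\nu(a),\nu(b)\}\in\Gamma^{\circ}$; then $\bigcup\alpha\hp\beta=\{x:\nu(x-c)>\delta\}$, and since $\pr_{\dnu}(a),\pr_{\dnu}(b)$ have the same values $\nu(a),\nu(b)$, also $\bigcup\Phi(\alpha)\hp\Phi(\beta)=\{w\in\cfd:\cnu(w-\pr_{\dnu}(c))>\delta\}$ with $\pr_{\dnu}(c)=\pr_{\dnu}(a)+\pr_{\dnu}(b)$. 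Now split into the two cases from the definition of $\hp^{\Gamma^{\circ}}$. If $\nu(c)\le\delta$, then by convexity every element of $\bigcup\alpha\hp\beta$ has value $\nu(c)\in\Gamma^{\circ}$, so $\alpha\hp^{\Gamma^{\circ}}\beta=\alpha\hp\beta$, and likewise $0\notin\Phi(\alpha)\hp\Phi(\beta)$; here $\pr_{\dnu}$ carries this ball onto the corresponding ball of $\cfd$ by (i)--(iii), so $\Phi(\alpha\hp^{\Gamma^{\circ}}\beta)=\Phi(\alpha)\hp\Phi(\beta)$. If $\nu(c)>\delta$, then $\bigcup\alpha\hp\beta=\fm^{\delta}$ and $\bigcup\Phi(\alpha)\hp\Phi(\beta)=\fm_{\cfd}^{\delta}$, so both additions contain the zero element, and the classes of value $>\Gamma^{\circ}$ inside $(\alpha\hp\beta)\cap\H_{\gamma}^{\times}(K)$ are exactly those absorbed into $0_{\Gamma^{\circ}}^{\gamma}$, while the classes of value in $\Gamma^{\circ}$ correspond under $\pr_{\dnu}$ to the nonzero classes of $\fm_{\cfd}^{\delta}$; again $\Phi(\alpha\hp^{\Gamma^{\circ}}\beta)=\Phi(\alpha)\hp\Phi(\beta)$. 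The cases where $0_{\Gamma^{\circ}}^{\gamma}$ is a summand are trivial by the identity axiom. Since $\Phi$ is a value-preserving bijection satisfying $\Phi(\alpha\hp^{\Gamma^{\circ}}\beta)=\Phi(\alpha)\hp\Phi(\beta)$, its inverse is a homomorphism too, so $\Phi$ is the desired isomorphism.

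I expect the only delicate part to be the bookkeeping in the second case: matching ``$0_{\Gamma^{\circ}}^{\gamma}\in\alpha\hp^{\Gamma^{\circ}}\beta$'' with ``$0\in\Phi(\alpha)\hp\Phi(\beta)$'' and recognizing that the high-value classes in $(\alpha\hp\beta)\cap\H_{\gamma}^{\times}(K)$ which fail to lie in $\H_{\gamma}^{\times}(K,\Gamma^{\circ})$ are precisely the ones to be read as $0_{\Gamma^{\circ}}^{\gamma}$. The trichotomy for $\Gamma^{\circ}$ together with $\gamma\in\Gamma^{\circ}_{>0}$ is exactly what makes this identification clean; everything else is unwinding the definitions of $\H_{\gamma}(K,\Gamma^{\circ})$, $\cfd$, and $\H_{\gamma}$.
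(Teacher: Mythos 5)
Your proof is correct and uses essentially the same map as the paper ($\Phi([a]_\gamma)=[a^\circ]_\gamma$, with $0_{\Gamma^\circ}^\gamma\mapsto 0$); the paper checks well-definedness and then says the homomorphism verification is routine, whereas you carry out the compatibility with the multivalued addition explicitly via the case split $\nu(a+b)\le\delta$ versus $\nu(a+b)>\delta$, which is exactly the bookkeeping the paper leaves to the reader.
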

\begin{proof}
Consider a map $f: \H_{\gamma}(K,\Gamma^{\circ})\rightarrow \H_{\gamma}(K^{\circ})$ defined as follows: For $\alpha\in \H_{\gamma}(K,\Gamma^{\circ})$, $f(\alpha)=[a^{\circ}]$ if $\alpha=[a]$, and $f(\alpha)=0$ if $\alpha=0_{\Gamma^{\circ}}$. The map $f$ is well-defined. Suppose $[a]=[b]\in\H_{\gamma}(K,\Gamma^{\circ})$. Then $a/b\in (1+\fm^{\gamma})$ and $\nu(a)=\nu(b)\in \Gamma^{\circ}$. So, we have that $(a/b)^{\circ}=a^{\circ}/b^{\circ}\in (1+(\fm^{\circ})^{\gamma})$, and it implies $[a^{\circ}]=[b^{\circ}]$. And it is clear that for all $x\in K$ with $\nu(x)>\Gamma^{\circ}$, $x^{\circ}=0$. It is routinely to check that $f$ is a homomorphism.
\end{proof}

\noindent We recall the following facts before proving a relative completeness theorem.
\begin{fact}\label{fact:KS_iso_theorem}{(Keisler-Shelah Isomorphism Theorem)}
Let $\CM$ and $\CN$ be two first order structures. If $\CM\equiv \CN$, then there is a ultrafilter $\CU$ on an infinite set $I$ such that $$\CM^{\CU}\cong \CN^{\CU},$$ where $\CM^{\CU}$ and $\CN^{\CU}$ are the ultrapowers of $\CM$ and $\CN$ with respect to $\CU$.
\end{fact}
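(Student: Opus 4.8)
The plan is to first dispose of the easy direction and then carry out the hard one in two stages: a version under GCH and Shelah's refinement to ZFC. The easy direction needs nothing: if $\CM^{\CU}\cong\CN^{\CU}$, then by the fundamental theorem of ultraproducts $\CM\equiv\CM^{\CU}\cong\CN^{\CU}\equiv\CN$, so $\CM\equiv\CN$. For the converse, assume $\CM\equiv\CN$ in a common language $\CL$ and fix an infinite cardinal $\lambda\ge|M|+|N|+|\CL|$, taking the index set to be $I=\lambda$. The goal is to produce an ultrafilter $\CU$ on $\lambda$ for which $\CM^{\CU}$ and $\CN^{\CU}$ are (i) elementarily equivalent, (ii) of equal cardinality, and (iii) homogeneous enough that a back-and-forth argument builds an isomorphism. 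Item (i) is automatic for every $\CU$, since by the fundamental theorem $\CM^{\CU}\equiv\CM\equiv\CN\equiv\CN^{\CU}$.

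Next I would secure (ii) and (iii) under GCH. Taking $\CU$ \emph{regular} forces $|\CA^{\CU}|=|\CA|^{\lambda}=2^{\lambda}$ for every infinite structure $\CA$ with $|\CA|\le 2^{\lambda}$, so (as $|M|,|N|\le\lambda$, the finite case being trivial) both ultrapowers have cardinality exactly $2^{\lambda}$; this gives (ii). For (iii) I would invoke the Keisler--Kunen theorem that a regular, $\lambda^{+}$-good ultrafilter on $\lambda$ exists in ZFC, together with Keisler's criterion that an ultrapower, by such an ultrafilter, of a structure in a language of size $\le\lambda$ is $\lambda^{+}$-saturated. Then $\CM^{\CU}$ and $\CN^{\CU}$ are elementarily equivalent, $\lambda^{+}$-saturated, and of cardinality $2^{\lambda}$; since $2^{\lambda}=\lambda^{+}$ under GCH, they are saturated models of the same cardinality, hence isomorphic by the standard uniqueness theorem for saturated models. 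This proves the statement assuming GCH.

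To remove GCH --- the real substance of Shelah's theorem --- one must abandon the shortcut through $\lambda^{+}$-saturation, because $2^{\lambda}$ may exceed $\lambda^{+}$ and the ultrapowers need not be $2^{\lambda}$-saturated. The route I would follow is Shelah's: build $\CU$ on $\lambda$ as the union of an increasing transfinite chain of filters $\langle F_{\alpha}:\alpha<2^{\lambda}\rangle$ while simultaneously building a compatible chain of partial elementary bijections between $\CM^{\CU}$ and $\CN^{\CU}$; at stage $\alpha+1$ one enlarges $F_{\alpha}$ just enough to force the $\alpha$-th designated element of one ultrapower into the domain or range of the next partial map, all the while respecting the bookkeeping that keeps the final $\CU$ a regular, $\lambda^{+}$-good ultrafilter. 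I expect the main obstacle to be precisely this simultaneous construction: verifying that the successive enlargements remain mutually consistent and that the limit is genuinely an ultrafilter with the required properties is the combinatorial heart of Shelah's argument, whereas the cardinality and regularity computations and the GCH case above are routine.
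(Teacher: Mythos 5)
The paper offers no proof of this statement: it is quoted as a classical theorem (Keisler under GCH, Shelah in ZFC; see Chang--Keisler or Shelah, \emph{Every two elementarily equivalent models have isomorphic ultrapowers}, Israel J. Math.\ 10 (1971)), so there is nothing internal to compare your argument against. Judged on its own terms, your easy direction (\L o\'s) and your GCH argument are correct and are exactly Keisler's original proof: a regular $\lambda^{+}$-good countably incomplete ultrafilter on $\lambda$ makes both ultrapowers $\lambda^{+}$-saturated of cardinality $2^{\lambda}=\lambda^{+}$, and elementarily equivalent saturated models of the same cardinality are isomorphic. (You implicitly use countable incompleteness of $\CU$ for the saturation theorem; regularity supplies it, but it is worth saying.)

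The genuine gap is the ZFC case, which is the entire content of the theorem as you would need it without GCH. You correctly identify Shelah's strategy --- build the ultrafilter as the union of a transfinite chain of filters while simultaneously building the isomorphism by back-and-forth, with bookkeeping to keep the limit a regular ultrafilter with enough consistency/goodness --- but you do not execute any of it, and the consistency of the successive filter extensions with the partial-isomorphism requirements is not a routine verification one can wave at; it is the theorem. Two further points to repair if you do carry it out: Shelah's construction naturally produces an ultrafilter on an index set of cardinality $2^{\lambda}$ rather than on $\lambda$ itself (harmless for the statement as given, which allows any infinite $I$, but your setup fixes $I=\lambda$); and the back-and-forth cannot be run between elements of the ultrapowers chosen after the ultrafilter is complete --- one must enumerate functions $\lambda\to M$ and $\lambda\to N$ in advance and decide their fate filter-stage by filter-stage, which is where the real bookkeeping lives. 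For the purposes of this paper, citing the result is the right move; a self-contained proof would be a substantial detour.
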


\begin{fact}\cite[Lemma 1.5]{Be}\label{fact:def_resiue_ring}
Let $(K,\nu)$ be a finitely ramified henselian valued field of mixed characteristic $(0,p)$. Then the valuation ring $R(K)$ of $(K,\nu)$ is definable by the formula $$\phi_q(x)\equiv \exists y\ y^{q}=1+px^{q}$$ for some $q>0$ such that $p\not|q$ and $q>e_{\nu}(p)$.
\end{fact}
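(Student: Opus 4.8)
The plan is to prove the two inclusions $R(K)\subseteq\phi_q(K)$ and $\phi_q(K)\subseteq R(K)$ separately, where $\phi_q(K):=\{x\in K: K\models\phi_q(x)\}$; the first is a Hensel's lemma argument and the second a valuation computation exploiting $q>e_\nu(p)$ and $p\nmid q$. Note first that since $\ch(k)=p$, any integer $q$ with $p\nmid q$ is a unit of $R(K)$, i.e.\ $\nu(q)=0$.

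For $R(K)\subseteq\phi_q(K)$, I would fix $x\in R(K)$ and consider $f(T)=T^q-(1+px^q)\in R(K)[T]$. Then $f'(1)=q$ has $\nu(f'(1))=0$, while $f(1)=-px^q$ has $\nu(f(1))=\nu(p)+q\nu(x)\ge\nu(p)>0=2\nu(f'(1))$. Henselianity then yields $y\in R(K)$ with $f(y)=0$, that is $y^q=1+px^q$, so $K\models\phi_q(x)$. (Equivalently, $f$ reduces mod $\fm$ to $T^q-1$, which has $1$ as a simple root, so the root lifts.)

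For $\phi_q(K)\subseteq R(K)$, I would argue contrapositively: assuming $\nu(x)<0$, show that $1+px^q$ has no $q$-th root in $K$. Let $\Delta$ be the smallest nonzero convex subgroup of $\Gamma$; finite ramification forces $\nu(p)\in\Delta$, and $\Delta\cong\BZ$ with minimal positive element $\delta_0$ and $\nu(p)=e\delta_0$, where $e:=e_\nu(p)$. The first claim is $\nu(px^q)<0$: if $\nu(x)=-m\delta_0\in\Delta$ with $m\ge 1$, then $\nu(px^q)=(e-qm)\delta_0$ and $qm\ge q>e$; if $\nu(x)\notin\Delta$, then $\nu(x)$ lies below every element of $\Delta$, so $q\nu(x)<-e\delta_0$ and $\nu(px^q)=e\delta_0+q\nu(x)<0$. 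Hence $\nu(1+px^q)=\nu(px^q)=\nu(p)+q\nu(x)$. If some $y\in K$ satisfied $y^q=1+px^q$, then $q\nu(y)=\nu(p)+q\nu(x)$; passing to the torsion-free group $\Gamma/\Delta$ gives $\nu(y)\equiv\nu(x)\pmod{\Delta}$, and substituting $\nu(y)=\nu(x)+t\delta_0$ back yields $qt\delta_0=e\delta_0$, i.e.\ $q\mid e$ — impossible since $0<e<q$. (When $\nu(x)\in\Delta$ one can skip the quotient: convexity forces $\nu(y)\in\Delta$, and $q\nu(y)=(e-qm)\delta_0$ directly gives $q\mid e$.) So no such $y$ exists and $K\not\models\phi_q(x)$.

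I expect the Hensel step to be entirely routine. The part needing care is the second inclusion, specifically the value-group bookkeeping when $\Gamma$ is non-archimedean: one must invoke finite ramification to pin down $\Delta\cong\BZ$ and $\nu(p)=e_\nu(p)\,\delta_0$, and then quotient by $\Delta$ to handle elements $x$ whose value is infinitely negative. Beyond that, everything is immediate from henselianity and $p\nmid q$.
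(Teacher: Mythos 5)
Your argument is correct. Note, however, that the paper states this as a Fact cited directly from B\'elair's Lemma~1.5 and gives no proof of its own, so there is no in-paper argument to compare against; the check is purely whether your reconstruction is sound, and it is. The Hensel step is clean: with $q$ a unit (as $p\nmid q$), the residue polynomial $T^q-1$ has $1$ as a simple root, so for $x\in R(K)$ a root of $T^q-(1+px^q)$ lifts, giving $R(K)\subseteq\phi_q(K)$. For the converse you correctly isolate the only non-trivial bookkeeping: finite ramification forces $\Gamma$ to be discretely ordered, so the smallest nonzero convex subgroup $\Delta$ exists, is isomorphic to $\BZ$ with generator $\delta_0$, and contains $\nu(p)=e\delta_0$; from there the computation $\nu(px^q)=e\delta_0+q\nu(x)<0$ (split according to whether $\nu(x)\in\Delta$ or not) and the divisibility obstruction $q\mid e$ with $0<e<q$ are both right. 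The appeal to $\Gamma/\Delta$ being torsion-free is also justified, since $\Delta$ is convex. One could streamline the non-archimedean case by noting $q\nu(x)\le\nu(x)<-e\delta_0$ directly instead of passing through the quotient there, but as written the argument is complete and correct.
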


\begin{theorem}\label{thm:AKE_hyperfield}
Let $(K_1,\nu_1,k_1,\Gamma_1)$ and $(K_2,\nu_2,k_2,\Gamma_1)$ be finitely ramified henselian valued fields of mixed characteristic $(0,p)$. Suppose $k_1$ and $k_2$ are perfect fields. Let $n> e_{\nu_2}(p)(1+e_{\nu_1}^2(p))$. The following are equivalent:
		\begin{enumerate}
			\item $K_1\equiv K_2$.
			\item $R_{n}(K_1)\equiv R_{n}(K_2)$ and $\Gamma_1\equiv \Gamma_2$.
			\item $\H_{n}(K_1)\equiv_{\H_{n}(\{p\})} \H_{n}(K_2)$.
		\end{enumerate}
\end{theorem}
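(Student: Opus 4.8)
The plan is to prove the equivalences by the implications $(1)\Rightarrow(2)\Rightarrow(3)\Rightarrow(1)$, reducing each arrow to facts already available. The implication $(1)\Rightarrow(2)$ is essentially Fact \ref{fact:fined_AKE_Rn}: if $K_1\equiv K_2$, then $R_n(K_1)\equiv R_n(K_2)$ and $\Gamma_1\equiv\Gamma_2$, since the $n$-th residue ring and the value group are interpretable in $K_i$ in the language $\CL_K$ (using that $R(K_i)$ is $\CL_K$-definable by Fact \ref{fact:def_resiue_ring}, hence $\fm^n$ and $R/\fm^n$ are too, and the value group is the standard quotient interpretation). For $(2)\Rightarrow(3)$: from $\Gamma_1\equiv\Gamma_2$ and $R_n(K_1)\equiv R_n(K_2)$ I want to deduce $\H_n(K_1)\equiv_{\H_n(\{p\})}\H_n(K_2)$. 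The key point is that, by Example \ref{ex:vhf_of_valuedfield} together with Lemma \ref{lem:basic_on_hyperfield}, the valued hyperfield $\H_n(K_i)$ is interpretable in the two-sorted structure $(R_n(K_i),\Gamma_i)$: its underlying set is $(K_i^\times/(1+\fm^n))\cup\{0\}$, which as a set is built from $\Gamma_i$ (the value) together with the unit part $R(K_i)^\times/(1+\fm^n)\cong R_n(K_i)^\times$; the multiplication is componentwise; and the ternary addition predicate $\gamma\in\alpha+\beta$ is, by Lemma \ref{lem:basic_on_hyperfield}(2)--(3), expressible purely in terms of the residue-ring data and the ordering on $\Gamma_i$ (one reads off the center $a+b$ of the output ball from the truncations and decides membership by the valuative inequality). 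Since elementary equivalence is preserved under interpretations, $\H_n(K_1)\equiv\H_n(K_2)$; and the constant $[p]$ is named by $p$ inside $R_n$, so the equivalence is over $\H_n(\{p\})$.

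For the remaining implication $(3)\Rightarrow(1)$, which I expect to be the heart of the matter, the strategy is to pass to elementarily equivalent, suitably saturated models and invoke the lifting theorem of Section \ref{section:lifting}. By the Keisler--Shelah theorem (Fact \ref{fact:KS_iso_theorem}) applied to $\H_n(K_1)$ and $\H_n(K_2)$, there is an ultrafilter $\CU$ with $\H_n(K_1)^\CU\cong_{\H_n(\{p\})}\H_n(K_2)^\CU$ as valued hyperfields over $p$. Now I want to recognize $\H_n(K_i)^\CU$ as the $n$-th valued hyperfield of a valued field: taking the same ultrapower of $K_i$, one has $K_i^\CU$ a henselian finitely ramified valued field of mixed characteristic with the same ramification index $e_{\nu_i}(p)$, and (again using that $\H_n$ is an interpretation commuting with ultrapowers) $\H_n(K_i)^\CU\cong\H_n(K_i^\CU)$. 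The residue field of $K_i^\CU$ is the ultrapower $k_i^\CU$, which is again perfect (being a union of an elementary chain of perfect fields, or directly: $x\mapsto x^p$ is surjective, a first-order-for-fixed-exponent statement). If $K_i^\CU$ is not already complete, pass to its completion $\widehat{K_i^\CU}$, which is a complete discrete valued field of mixed characteristic $(0,p)$ with perfect residue field and the same ramification index, and whose $n$-th valued hyperfield agrees with that of $K_i^\CU$ since $n$-th valued hyperfields only see a bounded initial segment of the valuation. Thus we obtain complete discrete valued fields $L_1,L_2$ with perfect residue fields, the same ramification indices as $K_1,K_2$, and an isometric isomorphism over $p$ between $\H_n(L_1)$ and $\H_n(L_2)$.

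Since $n> e_{\nu_2}(p)(1+e_{\nu_1}^2(p))\ge M(L_1)e_{\nu_1}(p)e_{\nu_2}(p)$ by the inequality recalled after Remark/Definition \ref{rem/def:krasnernumber}, Theorem \ref{thm:mainhomlifting} (equivalently Corollary \ref{cor:criteria_isom}) applies: the isomorphism over $p$ between $\H_n(L_1)$ and $\H_n(L_2)$ lifts to an isomorphism of valued fields $L_1\cong L_2$, hence $L_1\equiv L_2$. But $K_1\equiv K_1^\CU\equiv L_1$ and $K_2\equiv K_2^\CU\equiv L_2$ (ultrapowers and completions of discrete valued fields are elementary extensions — for the completion, this is the standard fact that a henselian valued field is an elementary substructure of its completion when the value group is a $\BZ$-group, which holds here), so $K_1\equiv K_2$, as desired. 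The main obstacle, and the step deserving the most care, is the bookkeeping in this last implication: verifying that $\H_n$ genuinely commutes with ultrapowers and completion, that perfectness and the ramification index are preserved, and that the "over $p$" clause survives the Keisler--Shelah isomorphism — all of which rest on having set up $\H_n(K)$ as an honest interpretation in $(R_n(K),\Gamma)$ in the step $(2)\Rightarrow(3)$, so it is worth doing that interpretation explicitly and once.
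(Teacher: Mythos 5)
Your overall shape $(1)\Rightarrow(2)\Rightarrow(3)\Rightarrow(1)$ is reasonable, and the first two arrows are fine modulo care (the paper actually proves $(1)\Leftrightarrow(2)$ by citing Fact \ref{fact:fined_AKE_Rn} and then $(1)\Rightarrow(3)$ directly, which is immediate and avoids the interpretability claim you use for $(2)\Rightarrow(3)$; if you do want $(2)\Rightarrow(3)$ you would have to check carefully that the multivalued addition on $\H_n(K)$ really is uniformly interpretable in the two-sorted structure $(R_n(K),\Gamma)$, which needs more than just Lemma \ref{lem:basic_on_hyperfield} — the extension $1\to R_n^\times\to K^\times/(1+\fm^n)\to\Gamma\to 0$ does not split canonically, and the ``center of the ball'' $a+b$ is not an element of $R_n$ once $\nu(a),\nu(b)>0$).

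The genuine gap is in $(3)\Rightarrow(1)$. After Keisler--Shelah you have an isomorphism $\H_n(K_1^\CU)\cong_{\H_n(\{p\})}\H_n(K_2^\CU)$, but the ultrapowers $K_i^\CU$ are \emph{not discrete valued fields}: for a nonprincipal ultrafilter on $\BN$, the value group $\Gamma_i^\CU$ is a proper elementary extension of $\Gamma_i$, hence a non-archimedean $\BZ$-group, and in particular not a discrete subgroup of $\BR$. Taking the ``completion'' $\widehat{K_i^\CU}$ therefore does not produce a complete \emph{discrete} valued field, so Theorem \ref{thm:mainhomlifting} and Corollary \ref{cor:criteria_isom} simply do not apply to your $L_1,L_2$; they are proved only for complete discrete valued fields of mixed characteristic. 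This is exactly the obstruction that the paper's proof is engineered to avoid. What the paper actually does is pass to the coarse valuation $\dnu_i$ with respect to the \emph{smallest nontrivial convex subgroup} $\Gamma^{\circ}$ of $\Gamma_i^{\CU}$: by Fact \ref{fact:coarse_valuation}, the core field $(K_i^{\circ},\cnu_i)$ has value group $\Gamma^{\circ}\cong\BZ$, the same ramification index as $K_i$, and (using $\aleph_1$-saturation) \emph{is} complete --- so it is a legitimate complete discrete valued field. Lemma \ref{lem:coarse_valuation} identifies $\H_n(K_i^{\circ})$ with the restricted hyperfield $\H_n(K_i^{\CU},\Gamma^{\circ})$, whence the over-$p$ isomorphism of hyperfields descends to $\H_n(K_1^{\circ})\cong_{\H_n(\{p\})}\H_n(K_2^{\circ})$, and only then does Corollary \ref{cor:criteria_isom} give $K_1^{\circ}\cong K_2^{\circ}$. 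Note also that the conclusion is \emph{not} obtained by your chain $K_1\equiv L_1\cong L_2\equiv K_2$: the paper feeds $K_1^{\circ}\cong K_2^{\circ}$ into the classical AKE principle (Fact \ref{fact:AKE}) applied to the equal-characteristic-$(0,0)$ valued fields $(K_i^{\CU},\dnu_i)$ (whose residue fields are precisely $K_i^{\circ}$), obtaining $(K_1^{\CU},\dnu_1)\equiv(K_2^{\CU},\dnu_2)$, and then uses B\'elair's definability of the residue ring (Fact \ref{fact:def_resiue_ring}) together with Fact \ref{fact:coarse_valuation}(1) to recover the original valuation rings $R_{\nu_i}$ as definable sets, concluding $(K_1,\nu_1)\equiv(K_2,\nu_2)$. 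You will need this coarse-valuation detour (or an equivalent device that actually produces complete discrete valued fields) to make $(3)\Rightarrow(1)$ go through.
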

\begin{proof}
For $(1)\Leftrightarrow (2)$, see Fact \ref{fact:fined_AKE_Rn}. It is clear that $(1)$ implies $(3)$. We show $(3)$ implies $(1)$. Suppose $(3)$ holds. By Fact \ref{fact:KS_iso_theorem}, we may assume that $\H_{n}(K_1)\cong_{\H_{n}(\BZ)} \H_{n}(K_2)$. By Remark \ref{rem:vhf_valuegroup}, $\Gamma_1\cong \Gamma_2$ and we may assume that $\Gamma_1=\Gamma_2=\Gamma$. By taking ultrapowers of $K_1$ and $K_2$ with respect to a nonprincipal ultrafilter on $\BN$, we may assume that $K_1$ and $K_2$ are $\aleph_1$-saturated. Let $\dnu_1$ and $\dnu_2$ be the coarse valuation of $\nu_1$ and $\nu_2$ with respect to the smallest non-trivial convex subgroup $\Gamma^{\circ}$. We have two valued fields $(K_1,\dnu_1)$ and $(K_2,\dnu_2)$ of equal characteristic $(0,0)$ with residue fields $K_1^{\circ}$ and $K_2^{\circ}$ respectively. By Fact \ref{fact:coarse_valuation}(2) and (3). $(K_1^{\circ},\cnu_1)$ and $(K_2^{\circ},\cnu_2)$ are complete discrete valued fields of mixed characteristic $(0,p)$. Note that the value groups of $(K_1,\dnu_1)$ and $(K_2,\dnu_2)$ are $\Gamma/\Gamma^{\circ}$. Since $\H_n(K_1)\cong_{\H_n(\BZ)}\H_n(K_2)$, we have that $\H_n(K_1^{\circ})\cong_{\H_n(\BZ)} \H_n(K_1^{\circ})$ by Lemma \ref{lem:coarse_valuation}. Since $e_{\cnu_i}(p)=e_{\nu_i}(p)$ for $i=1,2$, we have the $\cfd_1\cong \cfd_2$ by Corollary \ref{cor:criteria_isom}. By Fact \ref{fact:AKE}, we have that $(K_1,\dnu_1)\equiv (K_2,\dnu_2)$. To show that $(K_1,\nu_1)\equiv(K_2,\nu_2)$, it is enough to show that the valuation rings $R_{\nu}(K_1)$ of $(K_1,\nu_1)$ and $R_{\nu}(K_2)$ of $(K_2,\nu_2)$ are definable in $(K_1,\dnu_1)$ and $(K_2,\dnu_2)$ by the same formula. Recall the following result on a definability of a residue ring. Take $l>0$ large enough so that $q:=p^l+1>\max\{e_{\nu_1}(p),e_{\nu_2}(p)\}$. By Fact \ref{fact:def_resiue_ring}, $\phi_q(x)$ defines the residue rings $R_{\cnu}(K_1)$ and $R_{\cnu}(K_2)$ of $(\cfd_1,\cnu_1)$ and $(\cfd_2,\cnu_2)$. By Fact \ref{fact:coarse_valuation}(2), the valuation rings $R_{\nu}(K_1)$ and $R_{\nu}(K_2)$ are definable by the same formula in $(K_1,\dnu_1)$ and $(K_2,\dnu_1)$ so that $(K_1,\nu_1)\equiv (K_2,\nu_2)$.
\end{proof}
\noindent By Theorem \ref{thm:tame_vf_vhf} and the proof of $(3)\Rightarrow (1)$ of Theorem \ref{thm:AKE_hyperfield}, we have the following result.
\begin{corollary}\label{cor:AKE_hyperfield_tame}
Let $(K_1,\nu_1,k_1,\Gamma_1)$ and $(K_2,\nu_2,k_2,\Gamma_1)$ be finitely tamely ramified henselian valued fields of mixed characteristic $(0,p)$. Suppose $k_1$ and $k_2$ are perfect fields. The following are equivalent:
\begin{enumerate}
	\item $K_1\equiv K_2$.
	\item $\H_1(K_1)\equiv_{\H_1(\{p\})} \H_1(K_2)$.
\end{enumerate}
\end{corollary}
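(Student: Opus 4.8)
The plan is to run the proof of the implication $(3)\Rightarrow(1)$ of Theorem~\ref{thm:AKE_hyperfield} at the level $n=1$. The only place in that argument where the lower bound on $n$ is used is the appeal to Corollary~\ref{cor:criteria_isom} to recover the core field up to isomorphism, and in the tamely ramified case this is already available at level $n=m=1$ by Theorem~\ref{thm:tame_vf_vhf}. The direction $(1)\Rightarrow(2)$ is immediate: the first valued hyperfield $\H_1(K)=K/(1+\fm)$, together with its hyperfield operations, its valuation-divisibility relation, and the distinguished element $[p]_1$, is $\CL_K$-interpretable in the valued field $K$ by fixed formulas (the valuation ring, hence $1+\fm$, being quantifier-free definable from $|$), so $K_1\equiv K_2$ yields $\H_1(K_1)\equiv_{\H_1(\{p\})}\H_1(K_2)$.

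For $(2)\Rightarrow(1)$, assume $\H_1(K_1)\equiv_{\H_1(\{p\})}\H_1(K_2)$. By the Keisler--Shelah theorem (Fact~\ref{fact:KS_iso_theorem}) we pass to ultrapowers and assume $\H_1(K_1)\cong_{\H_1(\BZ)}\H_1(K_2)$, i.e.\ that there is an isomorphism over $p$; here we use that being henselian, of mixed characteristic $(0,p)$, finitely ramified of a fixed prime-to-$p$ ramification index, and having a perfect residue field are all first-order and hence persist through ultrapowers. By Remark~\ref{rem:vhf_valuegroup} the value groups become isomorphic, so we take $\Gamma_1=\Gamma_2=\Gamma$, and a further ultrapower along a nonprincipal ultrafilter on $\BN$ lets us also assume $K_1,K_2$ are $\aleph_1$-saturated. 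Let $\dnu_i$ be the coarsening of $\nu_i$ with respect to the smallest nontrivial convex subgroup $\Gamma^\circ\le\Gamma$; then each $(K_i,\dnu_i)$ is henselian of equal characteristic $(0,0)$ with value group $\Gamma/\Gamma^\circ$, and by Fact~\ref{fact:coarse_valuation}(2),(3) the core field $\cfd_i$ is a complete discrete valued field of mixed characteristic $(0,p)$ with perfect residue field $k_i$ and the same finite, prime-to-$p$ ramification index as $K_i$; in particular $\cfd_i$ is tamely ramified.

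By Lemma~\ref{lem:coarse_valuation} --- in the case of the first valued hyperfield, for which the identification $[a]\mapsto[a^\circ]$ used in its proof still applies --- the isomorphism over $p$ descends to an isomorphism over $p$ between $\H_1(\cfd_1)$ and $\H_1(\cfd_2)$. Since $\cfd_1,\cfd_2$ are complete discrete, tamely ramified, of mixed characteristic $(0,p)$, with perfect residue fields, Theorem~\ref{thm:tame_vf_vhf} applies in both directions, and the uniqueness clause there forces the two induced valued-field homomorphisms to be mutually inverse; hence $\cfd_1\cong\cfd_2$. Thus $(K_1,\dnu_1)$ and $(K_2,\dnu_2)$ are henselian of equal characteristic $0$ with isomorphic residue fields and equal value group, so $(K_1,\dnu_1)\equiv(K_2,\dnu_2)$ by Fact~\ref{fact:AKE}. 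Finally, choosing $q=p^\ell+1$ with $p\nmid q$ and $q>e_{\nu_i}(p)$, Fact~\ref{fact:def_resiue_ring} and Fact~\ref{fact:coarse_valuation} show that $R(K_i)$ is defined inside $(K_i,\dnu_i)$ by the single formula $\phi_q$, whence $(K_1,\nu_1)\equiv(K_2,\nu_2)$, which is $(1)$.

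I expect the substantive input to be entirely the invocation of Theorem~\ref{thm:tame_vf_vhf}, with the remaining work being routine verifications: that tameness, finite ramification of prime-to-$p$ index, and perfectness of the residue field really pass through the two ultrapowers and through the coarsening, so that the hypotheses of Theorem~\ref{thm:tame_vf_vhf} hold for the core fields; and that Lemma~\ref{lem:coarse_valuation}, stated there for $\gamma>0$, remains valid and ``over $p$''-compatible at the level $\gamma=0$ relevant to $\H_1$. Granting these, the argument is precisely the coarsening-plus-Ax--Kochen--Ershov scheme of Theorem~\ref{thm:AKE_hyperfield}, now carried out already at the first valued hyperfield.
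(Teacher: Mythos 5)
Your proposal is correct and takes essentially the same approach as the paper, which gives the proof in one line as ``By Theorem~\ref{thm:tame_vf_vhf} and the proof of $(3)\Rightarrow(1)$ of Theorem~\ref{thm:AKE_hyperfield}.'' You spell out the details the paper leaves implicit, including the right observation that Lemma~\ref{lem:coarse_valuation} (stated for $\gamma\in\Gamma^{\circ}_{>0}$) must be checked at the boundary level $\gamma=0$ corresponding to $\H_1=K/(1+\fm)$, and that the substitution of Theorem~\ref{thm:tame_vf_vhf} for Corollary~\ref{cor:criteria_isom} is exactly what lets the argument run already at $n=1$.
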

\noindent For local fields of mixed characteristic, they are elementary equivalent if and only if they are isomorphic. So we have the following corollary.
\begin{corollary}
Let $K_1$ and $K_2$ be local fields of mixed characteristic. Let $n>e_{\nu_2}(p)(1+e_{\nu_1}^2(p))$. The followings are equivalent:
\begin{enumerate}
	\item $\H_{n}(K_1)\equiv_{\H_{n}(\{p\})} \H_{n}(K_2)$.
	\item $\H_{n}(K_1)\cong_{\H_{n}(\{p\})}  \H_{n}(K_2)$.
\end{enumerate}
\end{corollary}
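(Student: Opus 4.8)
The plan is to route the argument through the residue ring $R_n(K_i)$, which for a local field is \emph{finite}, and then to combine the relative completeness Theorem~\ref{thm:AKE_hyperfield} with the isomorphism criterion of Corollary~\ref{cor:criteria_isom}. The implication $(2)\Rightarrow(1)$ is immediate: an isomorphism of valued hyperfields fixing $\H_n(\{p\})$ pointwise in particular witnesses $\H_n(K_1)\equiv_{\H_n(\{p\})}\H_n(K_2)$.

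For $(1)\Rightarrow(2)$, assume $\H_n(K_1)\equiv_{\H_n(\{p\})}\H_n(K_2)$. A local field of mixed characteristic is a complete — hence henselian — finitely ramified valued field of mixed characteristic $(0,p)$ with finite, in particular perfect, residue field, and $n>e_{\nu_2}(p)(1+e_{\nu_1}^2(p))$ by hypothesis; so Theorem~\ref{thm:AKE_hyperfield} applies and its implication $(3)\Rightarrow(2)$ gives $R_n(K_1)\equiv R_n(K_2)$. Since $R(K_i)$ is a complete discrete valuation ring with finite residue field $k_i$, the ring $R_n(K_i)=R(K_i)/\fm_{K_i}^n$ has cardinality $|k_i|^n<\infty$; an elementary equivalence of finite structures is an isomorphism, so $R_n(K_1)\cong R_n(K_2)$. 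Finally, Corollary~\ref{cor:criteria_isom}, applied to the local fields $K_1,K_2$ (finitely ramified, complete, mixed characteristic, $n$ large enough), turns $R_n(K_1)\cong R_n(K_2)$ into $\H_n(K_1)\cong_{\H_n(\{p\})}\H_n(K_2)$, which is $(2)$.

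I expect no serious obstacle here; the one point worth isolating is the finiteness of $R_n(K_i)$, which is exactly what licenses the passage from $\equiv$ to $\cong$ and is special to local fields. One cannot argue this way directly at the level of the hyperfields, since $\H_n^{\times}(K_i)$ surjects onto $\Gamma\cong\BZ$ and so $\H_n(K_i)$ is infinite. Alternatively, one could skip the two middle steps by citing the classical fact that elementarily equivalent local fields of mixed characteristic are isomorphic — deducing $K_1\cong K_2$ directly from Theorem~\ref{thm:AKE_hyperfield} and then applying Corollary~\ref{cor:criteria_isom} — but the route above stays inside the present paper.
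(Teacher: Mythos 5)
Your argument is correct. The paper's own proof is a one-liner: it invokes the classical fact that elementarily equivalent local fields of mixed characteristic are isomorphic, applies Theorem~\ref{thm:AKE_hyperfield} ($(3)\Rightarrow(1)$) to get $K_1\equiv K_2$, concludes $K_1\cong K_2$, and then reads off $(2)$ from Corollary~\ref{cor:criteria_isom}. Your route avoids quoting that external fact and instead stays inside the paper's machinery: Theorem~\ref{thm:AKE_hyperfield} gives $R_n(K_1)\equiv R_n(K_2)$, finiteness of $R_n(K_i)$ (because $k_i$ is finite, with $|R_n(K_i)|=|k_i|^n$) upgrades elementary equivalence to isomorphism, and Corollary~\ref{cor:criteria_isom} ($(2)\Rightarrow(3)$) converts that into $(2)$. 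The two routes are closely related --- the classical fact is itself most naturally proved by essentially the same finiteness argument --- but yours is more self-contained and makes explicit exactly where the residue field's finiteness enters. Your parenthetical observation that one cannot reason directly at the hyperfield level, since $\H_n(K_i)$ surjects onto $\Gamma\cong\BZ$ and is therefore infinite, is correct and worth stating. You also flagged the paper's shortcut yourself, so nothing is missing.
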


\end{document}